\documentclass[reqno,11pt]{amsart}
\textheight22cm \topmargin-0.3cm \oddsidemargin7mm
\evensidemargin7mm \textwidth14cm \headsep0.8cm \headheight0.4cm
\numberwithin{equation}{section}
\usepackage{amsmath}
\usepackage{esint} 
\usepackage{amsthm}
\usepackage{epsfig}
\usepackage{psfrag}
\usepackage{graphicx}
\usepackage{graphpap,latexsym,epsf}
\usepackage{color}
\usepackage{amssymb,mathrsfs,enumerate}
\usepackage{mathtools}
\definecolor{citegreen}{rgb}{0,0.6,0}
\definecolor{refred}{rgb}{0.8,0,0}
\usepackage[%
colorlinks, 
citecolor=citegreen, 
linkcolor=refred]{hyperref}
\newcommand{\HH}{\mathbb{H}}
\newcommand{\N}{\mathbb{N}}

\newcommand{\Sph}{\mathbb{S}}
\newcommand{\R}{\mathbb{R}}

\def\HHH{{\rm H}}
\def\RRR{{\mathrm R}}

\def\a{\alpha}
\def\b{\beta}

\newcommand{\pa}{\partial}

\newcommand{\ep}{\varepsilon}
\newcommand{\rmd}{{\rm d}}


\newcommand{\go}{g}

\newcommand{\Ric}{{\rm Ric}}

\newcommand{\D}{{\rm D}}
\newcommand{\DD}{{\rm D}^2}
\newcommand{\De}{\Delta}

\newcommand{\umax}{u_{\rm max}}
\newcommand{\umin}{u_{\rm min}}
\newcommand{\mmax}{m_{\rm max}}


\newcommand{\na}{\nabla}

\mathchardef\emptyset="001F

\definecolor{vgreen}{rgb}{0.1,0.5,0.2}
\definecolor{viola}{RGB}{85,26,139}

\newtheorem{theorem}{Theorem}[section]
\newtheorem{remark}{Remark}
\newtheorem{corollary}[theorem]{Corollary}
\newtheorem{definition}{Definition}
\newtheorem{proposition}[theorem]{Proposition}

\newtheorem{lemma}[theorem]{Lemma}


\newtheorem{defapp}{Definition}[section]

\addtolength{\textwidth}{2.4cm}
\addtolength{\textheight}{2cm}
\addtolength{\voffset}{-1cm}
\hoffset=-19pt
\addtolength{\footskip}{1cm}


\begin{document}

\hyphenation{ma-ni-fold}

\title[On the mass of static metrics with positive cosmological constant -- I
]{On the mass of static metrics \\ with positive cosmological constant -- I}

\author[S.~Borghini]{Stefano Borghini}
\address{S.~Borghini, Universit\`a degli Studi di Trento,
via Sommarive 14, 38123 Povo (TN), Italy 
and Scuola Normale Superiore di Pisa,
Piazza Cavalieri 7, 56126 Pisa, Italy}
\email{stefano.borghini@unitn.it}


\author[L.~Mazzieri]{Lorenzo Mazzieri}
\address{L.~Mazzieri, Universit\`a degli Studi di Trento,
via Sommarive 14, 38123 Povo (TN), Italy}
\email{lorenzo.mazzieri@unitn.it}


\begin{abstract} 

In this paper we propose and discuss a notion of mass for compact static metrics with positive cosmological constant.
As a consequence, we characterise the de Sitter solution as the only static vacuum metric with zero mass. Finally, we show 
how to adapt our analysis to the case of negative cosmological constant, leading to a uniqueness theorem for the Anti de Sitter spacetime.


\end{abstract}

\maketitle

\noindent\textsc{MSC (2010): 
35B06,
\!53C21,
\!83C57,
}

\smallskip
\noindent\keywords{\underline{Keywords}:  static metrics, (Anti) de Sitter solution, positive mass theorem.} 

\date{\today}

\maketitle


\section{Introduction}
\label{sec:intro}

\subsection{Setting of the problem and preliminaries.} 
\label{sub:prelim}
In this paper we consider the classification problem for {\em static vacuum metrics} in presence of a cosmological constant. These are given by triples $(M,\go, u)$, where $(M,\go)$ is an $n$-dimensional Riemannian manifold, $n \geq 3$, with (possibly empty) smooth compact boundary $\pa M$, and $u \in {\mathscr C}^\infty  (M)$ is a smooth nonnegative function obeying the following system
\begin{equation}
\label{eq:SES}
\begin{dcases}
u\,\Ric=\DD u+ \frac{2\Lambda}{n-1}\,u\,\go, & \mbox{in } M  ,\\
\ \;\, \De u=-\frac{2\Lambda}{n-1}\, u, & \mbox{in } M ,
\end{dcases}
\end{equation}
where $\Ric$, $\D$, and $\De$ represent the Ricci tensor, the Levi-Civita connection, and the Laplace-Beltrami operator of the metric $\go$, respectively, and $\Lambda\in\R$ is a constant called {\em cosmological constant}.
If the boundary $\pa M$ is non-empty, we will always assume that it coincides with the zero level set of $u$, so that, in particular, $u$ is strictly positive in the interior of $M$. In the rest of the paper the metric $\go$ and the function $u$ will be referred to as {\em static metric} and {\em static potential}, respectively, whereas the triple $(M,\go, u)$ will be called a {\em static solution} or a {\em static triple}. The reason for this terminology relies on the physical nature of the problem. In fact, a classical computation shows that if $(M,\go,u)$ is a {\em static solution}, then the Lorentzian metric $\gamma = -u^2 \, dt \otimes dt + \go$ on $X=\R\times (M\setminus\pa M)$ satisfies the {\em vacuum Einstein field equations (with cosmological constant)}
\smallskip
\begin{equation*}
\Ric_\gamma \,- \frac{\RRR_\gamma}{2} \, \gamma \,+ \Lambda \, \gamma\, = \, 0 \, , \quad \hbox{ in \,\, $X$} \, .
\end{equation*}
The converse is also true in the sense that if a Lorentzian  manifold $(X, \gamma)$ satisfies the above equation, and if in addition there exists an everywhere defined time-like killing vector field whose orthogonal distribution is integrable, then it must have a warped product structure, that is, $X$ splits as $\R \times M$ and $\gamma$ can be written as $\gamma = - u^2 \, dt\otimes dt + \go$, where $(M,\go)$ and $u \in {\mathscr C}^\infty  (M)$  are respectively a Riemannian manifold and a smooth function satisfying system~\eqref{eq:SES}.

\smallskip

Coming back to the preliminary analysis of system~\eqref{eq:SES}, we list some of the basic properties of static solutions, whose proof can be found in~\cite[Lemma~3]{Ambrozio} as well as in the other references listed below.

\begin{itemize}

\item Concerning the regularity of the function $u$, we know from~\cite{Chrusciel_1,ZumHagen} that $u$ is analytic. In particular, by the results in~\cite{Sou_Sou_1}, we have that its critical level sets are discrete.
\smallskip
\item Taking the trace of the first equation and substituting the second one, it is immediate to deduce that the scalar curvature of the metric $\go$ is constant, and more precisely it holds
\begin{equation}
\label{eq:CSC}
\RRR=2\Lambda \, .
\end{equation}
In particular, we observe that choosing a normalization for the cosmological constant corresponds to fixing a scale for the metric $\go$.
\smallskip
\item In the case where the boundary is a non-empty smooth submanifold, one has that $\pa M = \{u=0\}$ is a regular level set of $u$, and in particular it follows from the equations that it is a (possibly disconnected) totally geodesic hypersurface in $(M,\go)$. The connected components of $\pa M$ will be referred to as {\em horizons}. In the case where $\Lambda>0$ we will also distinguish between horizons of black hole type, of cylindrical type and of cosmological type (see Definition~\ref{def:horiz} in Section~\ref{sec:settingandstatement_D}). 
\smallskip
\item Finally, one has that the quantity $|\D u|$ is locally constant and positive on $\pa M$. The constant value of $|\D u|$ on a given connected component of the boundary gives rise to the important notion of {\em surface gravity}, which will play a fundamental role in the subsequent discussion. 
On this regard, it is important to notice that on one hand the equations in~\eqref{eq:SES} are invariant by rescaling of the {\em static potential}, whereas on the other hand the value of $|\D u|$ heavily depends on such a choice. Hence, in order to deal with meaningful objects, one needs to remove this ambiguity, fixing a normalization of the function $u$. This is done in different ways, depending on the sign of the cosmological constant as well as on some natural geometric assumptions.
\begin{itemize}
\smallskip
\item In the case where $\Lambda>0$ and $M$ is compact, which is the one we are more interested in, we will define the {\em surface gravity} of an horizon $\Sigma\subset\pa M$ as the quantity 
\begin{equation*}
\kappa(\Sigma) \,\, = \,\, \frac{\,\,|\D u|_{|_\Sigma}}{\max_M u} \,.
\end{equation*}
This definition coincides with the one suggested in~\cite{Bou_Haw,Pap_Kan}. Of course, up to normalize $u$ in such a way that $\max_M u =1$, the above definition reduces to $\kappa(\Sigma) = |\D u|_{|_\Sigma}$.
\smallskip
\item In the case where $\Lambda=0$ and $(M,g,u)$ is asymptotically flat with bounded {\em static potential}, the {\em surface gravity} of an horizon $\Sigma \subset \pa M$ is defined as 
\begin{equation*}
\kappa(\Sigma) \,\, = \,\, \frac{\,\,|\D u|_{|_\Sigma}}{\sup_M u} \,.
\end{equation*}
Again, under the usual normalization $\sup_M u =1$, the above definition reduces to $\kappa(\Sigma) = |\D u|_{|_\Sigma}$.
\smallskip
\item In the case where $\Lambda<0$ and $(M, g, u)$ is a conformally compact static solution (see Definition~\ref{def:CC_A} in Appendix~\ref{sec:appB}), such that the scalar curvature $\RRR^{\pa_\infty M}$ of the metric induced by (the smooth extension of) $\overline{g} = u^{-2} g$ on the boundary at infinity $\pa_\infty M$ is constant and nonvanishing,  the {\em surface gravity} of an horizon $\Sigma \subset \pa M$ is defined as 
\begin{equation*}
\kappa(\Sigma) \,\, = \,\,
\frac{\,\,|\D u|_{|_\Sigma}}{\sqrt{ \left| \, \dfrac{n \, \RRR^{\pa_\infty M}
}{2(n-2)\Lambda}   \,\right|}} \,.
\end{equation*}
In this case, according to~\cite[Section~VII]{Chr_Sim}, a natural normalization for the static potential is the one for which, under the above assumptions, one has that the constant value of $\big| \,  \RRR^{\pa_\infty M}
\,\big|$ coincides with $- 2 (n-2)\Lambda/n$. Having fixed this value, the surface gravity can be computed as $\kappa(\Sigma) = |\D u|_{|_\Sigma}$.
\end{itemize}

\end{itemize}

\smallskip

Before proceeding, it is worth giving further comments about the notion of {\em surface gravity}. In the Newtonian case, the surface gravity of a rotationally symmetric massive body (e.g. a planet of the solar system) can be physically interpreted as the intensity of the gravitational field due the body, as it is measured by a massless observer sitting on the surface of the body. For example the Newtonian surface gravity of the Earth is given by the well known value ${\rm g} = 9.8 \, {\rm m/s^2}$, the one of Jupiter is given by $2.53 \, {\rm g}$ and the one of the Sun is given by $28.02 \, {\rm g}$. Of course, in the case of black holes, the Newtonian surface gravity is no longer a meaningful concept, since it becomes infinite when computed at the horizon, regardless of the mass of the black hole. To overcome this issue one is led to introduce the appropriate relativistic concept of {\em surface gravity} of a Killing horizon. The precise definition is recalled and discussed in Appendix~\ref{sec:appB}, but roughly speaking the idea is to consider an observer sitting far away from the horizon so that its measurement of the gravitational field must be corrected by a suitable time dilation factor, giving rise to a finite number. Such a number turns out to be related to the mass of the black hole and allows to distinguish between black holes of large mass and black holes with small mass. Thus, it can reasonably be interpreted as the {\em surface gravity} of the black hole. For further insights about the physical meaning of this concept, we refer the reader to~\cite[Section~12.5]{Wald}.
%
%

Having this in mind, it is not surprising that the behavior of $|\D u|$, either at the horizons or along the geometric ends of a {\em static solution}, can be put in relation with the  mass aspect of the solution itself. To illustrate this fact, we first consider the case of an asymptotically flat static solution $(M,\go,u)$ to~\eqref{eq:SES} (e.g. in the sense of~\cite[Definition~1]{Ago_Maz_2}) with zero cosmological constant and bounded static potential. In this situation, up to rescale $u$ in such a way that $\sup_M u =1$, one can introduce the following (Newtonian-like) notion of mass 
\begin{equation*}
m(M, \go, u) \,\, = \,\, \frac{1}{(n-2) |\Sph^{n-1}|} \int_\Sigma \left\langle \, \D u \, | \,  \nu \, \right\rangle \, \rmd \sigma \, ,
\end{equation*}
where $\Sigma$ is any closed two sided regular hypersurface enclosing the compact boundary of $M$ and $\nu$ its exterior 
unit normal (see for instance~\cite[Corollary~4.2.4]{Cederbaum} and the discussion below). Using the Divergence Theorem and the fact that $\Delta u = 0$, one has that the above quantity may also be computed in the following two ways
\begin{equation}
\label{eq:static_mass_0}
m (M, \go, u) \,\, = \,\, \frac{1}{(n-2) |\Sph^{n-1}|} \int_{\pa M}  \!\!\! | \D u | \, \rmd \sigma \,\, = \,\, \frac{1}{(n-2) |\Sph^{n-1}|} \,\,\lim_{R \to +\infty}\int_{S_R} \!\! \left\langle \, \D u \, | \,  \nu \, \right\rangle \, \rmd \sigma  \,,
\end{equation}
where the set $S_R$ that appears in the rightmost hand term is a coordinates sphere of radius $R$. In other words, if $x^1,\ldots,x^n$ are asymptotically flat coordinates, then $S_R = \{ |x| = R\}$.
In particular, we notice that the last expression makes sense even if $M$ has no boundary and agrees (up to a constant factor) with the more general concept of Komar mass (or KVM mass) as it is defined in~\cite[Formula (4)]{Beig}. In the same paper, it is proven that the above definition agrees with the notion of ADM mass of the asymptotically flat manifold $(M, \go)$, which according to~\cite{Arn_Des_Mis,Bartnik}, is defined as
\begin{equation*}
m_{ADM} (M, \go) \,\, = \,\,  \frac{1}{2(n-1) |\Sph^{n-1}|} \,\,\lim_{R \to +\infty}\int_{S_R} \,  \sum_{i,j} \left( \frac{\pa g_{ij}}{\pa x^i} - \frac{\pa g_{ii}}{\pa x^j}\right)\nu^j \, \rmd \sigma  \,.
\end{equation*}
On the other hand, when the boundary of $M$ is non-empty, the first expression in~\eqref{eq:static_mass_0} is also of some interest, since it can be employed to prove that, when $\pa M$ is connected, surface gravity and mass are simply proportional to each other. Summarising the above discussion, we have that for a static vacuum asymptotically flat solution $(M,g,u)$ to~\eqref{eq:SES} with $\Lambda =0$ and compact connected boundary $\pa M$, one has that 
\begin{equation*}
\frac{|\pa M |}{(n-2)|\Sph^{n-1}|} \, \kappa (\pa M) \,\, = \,\, m_{ADM}(M,g) \, .
\end{equation*}

This kind of considerations will be extremely important for  the discussion in Section~\ref{sec:settingandstatement_D}, where, based on the concept of {\em surface gravity}, we are going to propose a definition of mass for static solutions with positive cosmological constant. In fact, unlike for the cases $\Lambda = 0$ and $\Lambda <0 $, where 
natural asymptotic conditions (i.e., asymptotical flatness and asymptotical hyperbolicity) can be imposed to the solutions in order to deal with objects for which efficient notions of mass are available and fairly well understood (see~\cite{Sch_Yau,Sch_Yau_2} and~\cite{Chr_Her,Wang_1}), there is no clear definition of mass in the case where the cosmological constant is taken to be positive, at least to the authors' knowledge.
On the other hand, looking at the literature, it can be easily checked that there is a general agreement about the fact that the mass parameter $m$ that shows up in some very special classes of explicit examples should be physically interpreted as the mass of the solution. For this reason, before proceeding, we recall in a synthetic list the most important examples of rotationally symmetric solutions to system~\eqref{eq:SES}.

\subsection{Rotationally symmetric solutions.}
\label{sub:rotsol}

For the sake of completeness, we also include the case of zero cosmological constant, whereas in the other two cases, we adopt the normalization $|\Lambda |= n(n-1)/2$.

\medskip

\noindent {\bf Solutions with $\Lambda =0$.} According to~\eqref{eq:CSC}, these solutions have constant scalar curvature equal to $\RRR \equiv 0$. We also adopt the convention
\begin{equation*}
 \sup_M u \,\, = \,\, 1 \, ,
\end{equation*}
in order to fix the normalization of the static potential $u$.

\begin{itemize}
\item \underline{Minkowski solution (Flat Space Form).} 
\begin{align}
\label{eq:M}
\nonumber 
\phantom{\qquad\qquad}M \, = \, \R^n  \, ,  \qquad \go \, = \, {d|x|\otimes d|x|} +|x|^2 g_{\Sph^{n-1}} \, , \\
u \, = \, 1 \, .\phantom{\qquad\qquad\qquad\qquad\qquad\qquad}
\end{align}
It is easy to check that the metric $\go$, which a priori is well defined only in $M\setminus \{ 0\}$, extends smoothly through the origin.
This model solution has zero mass and can be seen as the limit of the following Schwarzschild solutions, when the parameter $m \to 0^+$.

\smallskip

\item \underline{Schwarzschild solutions with mass $m>0$.}
\begin{align}
\label{eq:S}
\nonumber M \, = \, \R^n \setminus {B(0,r_0(m))}  \subset\R^n \, ,  \qquad \go \, = \, \frac{d|x|\otimes d|x|}{1- 2m |x|^{2-n}}+|x|^2 g_{\Sph^{n-1}} \, , \\
u \, = \, \sqrt{1- 2m |x|^{2-n}} \, .\phantom{\qquad\qquad\qquad\qquad\qquad}
\end{align}
Here, the so called Schwarzschild radius $r_0(m) = (2m)^{1/(n-2)}$ is the only positive solution to $1-2mr^{2-n}=0$. It is not hard to check that both  the metric $\go$ and the function $u$, which a priori are well defined only in the interior of $M$, extend smoothly up to the boundary.
\end{itemize}

\medskip

\noindent {\bf Solutions with $\Lambda = n(n-1)/2$.} According to~\eqref{eq:CSC}, these solutions are normalized to have constant scalar curvature equal to $\RRR \equiv n(n-1)$. We also adopt the notations
\begin{equation*}
 \umax \,\, = \,\, \max_M u  \qquad \hbox{and} \qquad {\rm MAX}(u)=\{p\in M \, : \, u(p)=\umax\} \, ,
\end{equation*}
for the static potential $u$. Moreover, it is convenient to set 
\begin{equation}
 \label{eq:mmax_D}
 \mmax=\sqrt{\frac{(n-2)^{n-2}}{n^n}} \, .
 \end{equation}

\begin{itemize}
\item \underline{de Sitter solution (Spherical Space Form).}
\begin{align}
\label{eq:D}
\nonumber M \, = \, \overline{B(0,1)}\subset\R^n \, ,  \qquad \go \, = \, \frac{d|x|\otimes d|x|}{1-|x|^2}+|x|^2 g_{\Sph^{n-1}} \, , \\
u \, = \, \sqrt{1-|x|^2} \, .\phantom{\qquad\qquad\qquad\qquad\qquad}
\end{align}
 It is not hard to check that both  the metric $\go$ and the function $u$, which a priori are well defined only in the interior of $M\setminus \{ 0\}$, extend smoothly up to the boundary and through the origin. This model solution can be seen as the limit of the following Schwarzschild--de Sitter solutions~\eqref{eq:SD}, when the parameter $m \to 0^+$. The de Sitter solution is such that the maximum of the potential is $\umax=1$, achieved at the origin, and it has only one connected horizon with surface gravity 
\begin{equation*}
|\D u| \,\, \equiv \,\, 1  \qquad \hbox{on} \quad \pa M \, .
\end{equation*} 
Hence, according to Definition~\ref{def:horiz} below, one has that this horizon is of cosmological type.  

\smallskip

\item \underline{Schwarzschild--de Sitter solutions with mass $0<m< \mmax$.}
\begin{align}
\label{eq:SD}
\nonumber 
\phantom{\qquad}M \, = \, \overline{B(0,r_+(m))} \setminus B(0, r_-(m))  \subset\R^n \, ,  \qquad \go \, = \, \frac{d|x|\otimes d|x|}{1-|x|^2- 2m |x|^{2-n}}+|x|^2 g_{\Sph^{n-1}} \, , \\
u 
\, = \, \sqrt{1-|x|^2- 2m |x|^{2-n}}\, .
\phantom{\qquad\qquad\qquad\qquad\qquad\qquad}
\end{align}
Here $r_-(m)$ and $r_+(m)$ are the two positive solutions to $1-r^2-\!2mr^{2-n}=0$. We notice that, for $r_-(m),r_+(m)$ to be real and positive, one needs~\eqref{eq:mmax_D}. It is not hard to check that both  the metric $\go$ and the function $u$, which a priori are well defined only in the interior of $M$, extend smoothly up to the boundary. This latter has two connected components with different character 
\begin{equation*}
\pa M_+  = \,\, \{ |x| = r_+(m)\} \qquad \hbox{and} \qquad \pa M_-  = \,\, \{ |x| = r_-(m)\} \, .
\end{equation*}
In fact, it is easy to check (see formul\ae~\eqref{eq:k+} and~\eqref{eq:k-}) that the surface gravities satisfy
\begin{equation*}
\phantom{\qquad\quad\,\,}\kappa (\pa M_+) \, = \, \frac{|\D u |}{\umax} \,\, < \,\, \sqrt{n}  \qquad \hbox{on $\pa M_+$}   \qquad \hbox{and} \qquad \kappa(\pa M_-) \, = \, \frac{|\D u |}{\umax} \,\, > \,\, \sqrt{n} \qquad \hbox{on  $\pa M_-$} \, .
\end{equation*}
Hence, according to Definition~\ref{def:horiz} below, one has that $\pa M_+$ is of cosmological type, whereas $\pa M_-$ is of black hole type. We also notice that it holds
$$
\umax\,=\,\sqrt{1-\left(\frac{m}{\mmax}\right)^{\frac{2}{n}}},\qquad {\rm MAX}(u)\,=\,\left\{|x|=\big[(n-2)m\big]^{\frac{1}{n}}\right\}\,,
$$ 
and $M \setminus {\rm MAX} (u)$ has exactly two connected components: $M_+$ with boundary $\pa M_+$ and $M_-$ with boundary $\pa M_-$. According to Definition~\ref{def:horiz}, we have that $M_+$ is an outer region, whereas $M_-$ is an inner region.

\begin{figure}
	\centering
	\includegraphics[scale=0.5]{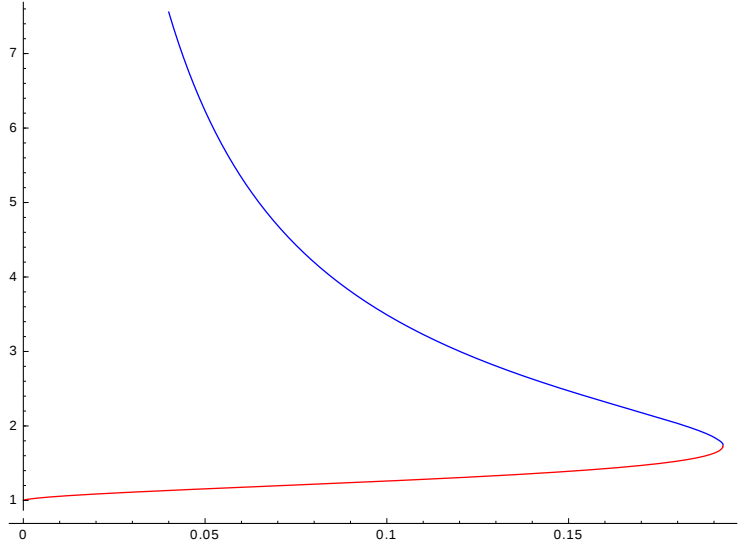}
	\caption{\small
Plot of the surface gravities $|\D u|/\umax$ of the two boundaries of the Schwarzschild--de Sitter solution~\eqref{eq:SD} as functions of the mass $m$ for $n=3$. The red line represents the surface gravity of the boundary $\pa M_+ = \{r = r_+(m)\}$, whereas the blue line represents the surface gravity of the boundary $\pa M_- = \{r = r_-(m)\}$. Notice that for $m=0$ we recover the constant value $|\D u| \equiv 1$ of the surface gravity on the (connected) cosmological horizon of the de Sitter solution~\eqref{eq:D}.
The other special situation is when $m=\mmax$. In this case the plot assigns to $\mmax= 1/(3 \sqrt{3})$ the unique value $\sqrt{3}$ achieved by the surface gravity on both the connected components of the boundary of the Nariai solution~\eqref{eq:cylsol_D}.
	} 
	\label{fig:surfacegravity_D}
\end{figure}


\smallskip

\item \underline{Nariai solution (Compact Round Cylinder).}
\begin{align}
\label{eq:cylsol_D}
\nonumber 
\phantom{\qquad\qquad}M \, = \, [0,\pi]\times\Sph^{n-1}\,, \qquad  \go \, = \, \frac{1}{n}\,\big[dr\otimes dr+(n-2)\,g_{\Sph^{n-1}}\big]\,, \\
u \, = \, \sin (r) \, .\phantom{\qquad\qquad\qquad\qquad\qquad\qquad\quad}
\end{align}
This model solution can be seen as the limit of the previous Schwarzschild--de Sitter solutions, when the parameter $m$ approaches $\mmax$, after an appropriate rescaling of the coordinates and of the potential $u$ (this was shown for $n=3$ in~\cite{Gin_Per} and then extended to all dimensions $n\geq 3$ in~\cite{Car_Dia_Lem}, see also~\cite{Bousso,Bou_Haw}). In this case, we have $\umax=1$ and ${\rm MAX}(u)=\{\pi/2\}\times\Sph^{n-1}$. Moreover, the boundary of $M$ has two connected components with the same constant value of the surface gravity, namely
\begin{equation*}
|\D u| \,\, \equiv \,\, \sqrt{n} \qquad \hbox{on} \quad \pa M\,=\,\{0\}\times\Sph^{n-1}\cup \{\pi\}\times\Sph^{n-1}\, .
\end{equation*}
\end{itemize}

\medskip

In Subsection~\ref{sub:surfmass}, we are going to use the above listed solutions as reference configurations in order to define the concept of {\em virtual mass} of a solution $(M, \go , u)$ to~\eqref{eq:pb_D}. For this reason it is useful to introduce since now the functions $k_+$ and $k_-$, whose graphs are plotted, for $n=3$, in Figure~\ref{fig:surfacegravity_D}. They represent the surface gravities of the model solutions as functions of the mass parameter $m$.

\smallskip

\begin{itemize}
\item The {\em outer} surface gravity function   
\begin{equation}
\label{eq:k+}
k_+  :  [\, 0, \mmax) \longrightarrow [\, 1, \sqrt{n} \, )
\end{equation}
is defined by
\begin{align*}
k_+(0) \,& = \, 1 \, , & \hbox{for $m=0$}\,, \phantom{\qquad\quad}  \\
k_+(m) \,&=\,
\sqrt{\frac{r_+^2(m)\left[1-(n-2)mr_+^{-n}(m)\right]^2}{1-\left( m / {\mmax}\right)^{2/n}}}\,, & \hspace{-1.5cm}\hbox{if $0<m<\mmax$} \, ,
\end{align*}
where $r_+(m)$ is the largest positive solution to $1-r^2-2mr^{2-n}=0$. Loosely speaking, $k_+(m)$ is nothing but the constant value of $|\D u|/\umax$ at $\{ |x| = r_+(m) \}$ for the Schwarzschild--de Sitter solution with mass parameter equal to $m$. We also observe that $k_+$ is continuous, strictly increasing and $k_+(m) \to \sqrt{n}$, as $m \to \mmax^-$.

\smallskip

\item The {\em inner} surface gravity function   
\begin{equation}
\label{eq:k-}
k_-  :  ( 0, \mmax \,] \longrightarrow [\, \sqrt{n}, +\infty \, )
\end{equation}
is defined by
\begin{align*}
k_-(\mmax) \,& = \, \sqrt{n} \, , & \hbox{for $m=\mmax$}\,, \phantom{\quad}  \\
k_-(m) \,&=\,
\sqrt{\frac{r_-^2(m)\left[1-(n-2)mr_-^{-n}(m)\right]^2}{1-\left( m / {\mmax}\right)^{2/n}}}\,, & \hspace{-1.5cm}\hbox{if $0<m<\mmax$} \, ,
\end{align*}
where $r_-(m)$ is the smallest positive solution to $1-r^2-2mr^{2-n}=0$. Loosely speaking, $k_-(m)$ is nothing but the constant value of $|\D u|/\umax$ at $\{ |x| = r_-(m) \}$ for the Schwarzschild--de Sitter solution with mass parameter equal to $m$. We also observe that $k_-$ is continuous, strictly decreasing and $k_-(m) \to + \infty$, as $m \to 0^+$.

\end{itemize}

%
%
\medskip

\noindent {\bf Solutions with $\Lambda = -n(n-1)/2$.}  According to~\eqref{eq:CSC}, these solutions are normalized to have constant scalar curvature equal to $\RRR \equiv -n(n-1)$. For complete solutions with empty boundary, we adopt the notations
\begin{equation*}
\umin \,\, = \,\, \min_M u \, \qquad \hbox{and} \qquad {\rm MIN}(u)=\{p\in M \, : \, u(p)=\umin\} \, .
\end{equation*}
\begin{itemize}
\item \underline{Anti de Sitter solution (Hyperbolic Space Form).}
\begin{align}
\label{eq:A}
\nonumber 
\phantom{\qquad\qquad} M \, = \, \R^n \, ,  \qquad \go \, = \, \frac{d|x|\otimes d|x|}{1+|x|^2}+|x|^2 g_{\Sph^{n-1}} \, , \\
u \, = \, \sqrt{1+|x|^2} \, .\phantom{\qquad\qquad\qquad\qquad\qquad}
\end{align}
It is easy to check that the metric $\go$, which a priori is well defined only in $M\setminus \{ 0\}$, extends smoothly through the origin.
This model solution has zero mass and can be seen as the limit of the following Schwarzschild--Anti de Sitter solutions~\eqref{eq:SA}, when the parameter $m \to 0^+$.
The Anti de Sitter solution has one end, empty boundary, and the set ${\rm MIN}(u)$ consists of a single point, the origin.
Moreover, both the function $u$ and the quantity $|\D u|$ tend to $+ \infty$ as one approaches the end of the manifold and more precisely they obey the following simple relation
\begin{equation}
\label{eq:asympt_AdS_A}
\lim_{|x|\to\infty}\left(u^2-\umin^2-|\D u|^2\right)\,\,=\,\,0\,.
\end{equation}
This fact will be of some relevance for the classification results presented in Section~\ref{sec:settingandstatement_A}.
\smallskip

\item \underline{Schwarzschild--Anti de Sitter solutions with mass $m>0$.}
\begin{align}
\label{eq:SA}
\nonumber 
\phantom{\qquad}M \, = \, \R^n \setminus B(0, r_0(m))  \subset\R^n \, ,  \qquad \go \, = \, \frac{d|x|\otimes d|x|}{1+|x|^2- 2m |x|^{2-n}}+|x|^2 g_{\Sph^{n-1}} \, , \\
u \, = \, \sqrt{1+|x|^2- 2m |x|^{2-n}} \, .\phantom{\qquad\qquad\qquad\qquad\qquad\qquad}
\end{align}
Here, $r_0(m)$ is the only positive solution to $1+r^2-2mr^{2-n}=0$. It is not hard to check that both  the metric $\go$ and the function $u$, which a priori are well defined only in the interior of $M$, extend smoothly up to the boundary
$$
\pa M\,\,=\,\,\{|x|=r_0(m)\}\,.
$$
Moreover, the triple~\eqref{eq:SA} is asymptotically Anti de Sitter in the sense of Definition~\ref{def:AAdS}. In particular, the metric $u^{-2}\go$ induces the standard spherical metric $g_{\Sph^{n-1}}$ on the conformal infinity $\pa_\infty M$ (for the definition of conformal infinity, we refer again to the Appendix, below Definition~\ref{def:CC_A}). It follows that the scalar curvature $\RRR^{\pa_\infty M}$ of the metric induced by $\overline{g} = u^{-2}\go$ on $\pa_\infty M$ is constant and equal to $(n-1)(n-2)=-2(n-2)\Lambda/n$, hence, according to the normalization suggested in Subsection~\ref{sub:prelim}, the surface gravity of the horizon $\pa M$ can be computed as
$$
\kappa({\pa M}) \,\, = \,\,  |\D u|_{|_{\pa M}}\,=\,\,r_0(m)\left[1+(n-2)\,m\,r_0^{-n}(m)\right]\,.
$$
In formal analogy with~\eqref{eq:asympt_AdS_A} one has that 
the quantities $u$ and $|\D u|$ obey the following relation
\begin{equation}
\label{eq:asympt_SADS}
\lim_{|x|\to\infty}\left(u^2- \frac{ \RRR^{\pa_\infty M}
}{(n-1)(n-2)} \,\, 
-|\D u|^2\right)\,\,=\,\,0\,.
\end{equation}
This is due to the fact that the asymptotic behavior of the Schwarzschild--Anti de Sitter solution is very similar to the one of the Anti de Sitter solution~\eqref{eq:A}. However, an important distinction is that since the boundary of the Schwarzschild--Anti de Sitter solution is non-empty and it coincides with the zero level set of the static potential, one is not allowed to replace the constant $ \RRR^{\pa_\infty M}
/(n-1)(n-2)$ with the quantity $\umin$ in the above limit.
%
We conclude by noticing that, as far as static black hole solutions with a warped product structure are considered in the case of negative cosmological constant, 
one has that the spherical topology of the cross section is not the only possible choice. We refer the reader to Subsection~\ref{sub:nonspherical_models} in the Appendix for a description of model solutions with either flat or hyperbolic cross sections as well as for some comments on their classification.
\end{itemize}

\medskip

As anticipated, the parameter $m$ that appears in the above formul\ae\ is universally interpreted as the mass of the {\em static solution} in the physical literature. In particular, the solutions with positive mass represent the basic models for static black holes. These solutions are usually listed among static vacua, since the massive bodies which are responsible for the curvature of the space can be thought as hidden beyond some connected components of the boundary of the manifolds (horizons of black hole type). On the other hand, the solutions with zero mass should be regarded as the true static vacua. Their curvature does not depend on the presence of -- possibly hidden -- matter but it is only due to the presence of a cosmological constant. For this reasons they represent the most basic solutions to~\eqref{eq:SES} and correspond to the three fundamental geometric shapes (space forms). 

The aim of the present paper is to propose a possible characterisation of the rotationally symmetric static solutions with zero mass in presence of a cosmological constant, namely the de Sitter and the Anti de Sitter solution. As it will be made precise in the following sections, we are going to prove that these are in fact the only possible solutions to system~\eqref{eq:SES} which satisfy respectively a natural bound on the {\em surface gravity}, in the $\Lambda >0$ case, and a growth condition inspired by~\eqref{eq:asympt_AdS_A}, in the $\Lambda <0$ case. For $\Lambda >0$, we will also give, in Subsection~\ref{sub:surfmass}, an interpretation of the above mentioned result in terms of a Positive Mass Statement (see Theorem~\ref{thm:PMS} below).


\begin{remark}
Analogous characterisations for $\Lambda = 0$ have been obtained in~\cite{Case}, where it is proven that every complete solution to~\eqref{eq:SES} with empty boundary must be Ricci flat and with constant $u$. It is also interesting to observe that if one further imposes the asymptotic flatness of the solution in the sense of~\cite[Definition~1]{Ago_Maz_2}, then it is not hard to prove that the ADM mass of such a solution must vanish. By the rigidity statement in the Positive Mass Theorem (see~\cite{Sch_Yau,Sch_Yau_2}), this implies that $(M,\go)$ is isometric to the flat Euclidean space and thus $(M, \go ,u)$ coincides with the Minkowski solution.  
\end{remark}


\section{Statement of the main results} 
\label{sec:settingandstatement_D}


\subsection{Uniqueness results for the de Sitter solution.}

For static solutions with positive cosmological constant, it is physically reasonable to assume, according to the explicit examples listed in Subsection~\ref{sub:rotsol} above, that $M$ is compact with non-empty boundary. 
As usual $u$ will be strictly positive in the interior of $M$ and such that $\pa M = \{ u= 0\}$. In order to get rid of the scaling invariance of system~\eqref{eq:SES}, we adopt the same normalization for the static metric $\go$ as in the previous subsection, so that we are led to study the system
\begin{equation}
\label{eq:pb_D}
\begin{dcases}
u\,\Ric=\DD u+n\,u\,\go, & \mbox{in } M\\
\ \;\,\De u=-n\, u, & \mbox{in } M\\
\ \ \ \ \; u>0, & \mbox{in }  M \\
\ \ \ \ \; u=0, & \mbox{on } \pa M 
\end{dcases}
 \qquad  \hbox{with} \ \  M \ \hbox{ compact}\,, \ \ \hbox{and} \ \ \RRR\equiv n(n-1)\, .
\end{equation}
Our first result is the following characterization of the de Sitter solution in terms of the surface gravity of the boundary. To state the result, we recall the notation $\umax=\max_M u$ introduced in Subsection~\ref{sub:rotsol} and for a given $\Sigma \in \pi_0(\pa M)$ (i.e., for a given connected component $\Sigma \subset \pa M$ of the boundary) we let
\begin{equation}
\label{eq:sg}
\kappa (\Sigma) \,\, = \,\, \frac{\,\,|\D u|_{|_\Sigma}}{\max_M u} \, \, \in \R \, 
\end{equation}
be the surface gravity of the horizon $\Sigma$, according to the normalization proposed in Subsection~\ref{sub:prelim}.
\begin{theorem}
\label{thm:shen_D}
Let $(M,\go,u)$ be a solution to problem~\eqref{eq:pb_D}. Then 
\begin{equation*}
\max_{\Sigma \in \pi_0(\pa M)} \kappa(\Sigma) 
\,\, \geq \,\, 1  \, .
\end{equation*} 
Moreover, if the equality holds, then, up to a normalization of $u$, the triple $(M,\go,u)$ is isometric to the de Sitter solution~\eqref{eq:D}.
\end{theorem}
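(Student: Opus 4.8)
The plan is to study the auxiliary function $W := |\D u|^2 + u^2$ on $M$. On $\pa M$ one has $u=0$, hence $W = |\D u|^2$ and $\kappa(\Sigma)^2 = W_{|_\Sigma}/(\max_M u)^2$ for each connected component $\Sigma$. On the other hand, since $M$ is compact, $u$ is positive in the interior, and $u=0$ on $\pa M$, the set ${\rm MAX}(u)$ is a nonempty subset of the interior of $M$, on which $\D u = 0$ and therefore $W = \umax^2$. So the inequality will amount to comparing the boundary value of $W$ with its interior value $\umax^2$. The first step is to show that $W$ obeys a favourable elliptic inequality. Using the Bochner formula for $|\D u|^2$ together with the two equations of~\eqref{eq:pb_D} — substituting $\DD u = u\Ric - nu\go$ and $\De u = -nu$ to express $\Ric(\D u,\D u)$ and $|\DD u|^2$ through $u$, $|\D u|^2$ and $|\Ric|^2$, and using $\De(u^2)=2|\D u|^2-2nu^2$ — and then reorganising the first--order terms, one is led to an identity of the form
\[
\tfrac12\,\De W \;-\; \tfrac{1}{2u}\,\langle\,\D u\,|\,\D W\,\rangle \;=\; u^2\big(\,|\Ric|^2 - n^2(n-2) - n\,\big)\,.
\]
Since $\RRR \equiv n(n-1)$, the Cauchy--Schwarz inequality gives $|\Ric|^2 \geq \RRR^2/n = n(n-1)^2$, while $n(n-1)^2 - n^2(n-2) - n = 0$; hence the right--hand side is nonnegative and $W$ is a subsolution of the operator $w\mapsto \De w - u^{-1}\langle\D u\,|\,\D w\rangle$.

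\textbf{The inequality.} This operator has smooth, locally bounded coefficients on the \emph{interior} of $M$ (its drift is singular only along $\pa M$). If $\max_M W$ is attained at an interior point, the strong maximum principle — applied on a small ball, where the operator is uniformly elliptic — forces $W$ to be constant, hence $W\equiv\umax^2$ by its value on ${\rm MAX}(u)$, so in particular $\max_{\pa M}|\D u|^2 = \umax^2$. Otherwise $\max_M W$ is attained on $\pa M$, and then $\max_{\pa M}|\D u|^2 = \max_{\pa M}W \geq W_{|_{{\rm MAX}(u)}} = \umax^2$. In either case $\max_{\pa M}|\D u|^2 \geq \umax^2$; recalling that $|\D u|$ is constant on each connected component of $\pa M$ and the definition~\eqref{eq:sg}, this is exactly $\max_{\Sigma\in\pi_0(\pa M)}\kappa(\Sigma) \geq 1$. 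The main point requiring care here is the bookkeeping in the Bochner computation, so that the weighted term $u^{-1}\langle\D u\,|\,\D W\rangle$ absorbs precisely the indefinite contributions and leaves $u^2\big(|\Ric|^2-\RRR^2/n\big)$ (this is where the specific coefficients $n$ and $n(n-1)$ in~\eqref{eq:pb_D} enter essentially); a secondary point is to note that, since the maximum principle is invoked only at interior points, the singularity of the operator at $\pa M$ is immaterial and no extension across $\pa M$ is needed.

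\textbf{The rigidity.} If equality holds, then $\max_{\pa M}|\D u|^2 = \umax^2$ forces $\max_M W = \umax^2$ to be attained also at the interior set ${\rm MAX}(u)$, which by the dichotomy above is possible only if $W\equiv\umax^2$. Then equality holds throughout the identity above, hence equality holds in Cauchy--Schwarz, i.e. $(M,\go)$ is Einstein with $\Ric = (n-1)\go$. Consequently the first equation of~\eqref{eq:pb_D} reduces to the Obata equation $\DD u + u\,\go = 0$, and, after normalising $\umax = 1$, also $|\D u|^2 = 1 - u^2$. Integrating these along unit--speed geodesics issuing from a point $p_0 \in {\rm MAX}(u)$ gives $u = \cos(\dist(\cdot,p_0))$, and inserting the Hessian equation into geodesic polar coordinates centred at $p_0$ gives $\go = d\rho\otimes d\rho + \sin^2\!\rho\,g_{\Sph^{n-1}}$ with $\rho = \dist(\cdot, p_0) \in [0,\pi/2]$; the change of variable $r = \sin\rho$ then identifies $(M,\go,u)$ with the de Sitter solution~\eqref{eq:D}. (Alternatively, one may invoke the Obata-type rigidity for $\DD u + u\,\go = 0$ on a compact manifold whose boundary is the zero set of $u$.) In this last step the only delicate point is the standard smoothness and injectivity--radius bookkeeping at $p_0$ needed to conclude that $M$ is the full round hemisphere rather than a smaller piece.
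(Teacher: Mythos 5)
Your proof is correct and follows essentially the same route as the paper's: the auxiliary function $W=|\D u|^2+u^2$ and your Bochner identity are precisely Shen's identity (Proposition~\ref{pro:div_BGH_AD}, formula~\eqref{eq:div_BGH_AD}) specialised to $\Lambda>0$ via $\De u=-nu$, and the maximum-principle-plus-Obata scheme is exactly that of Theorem~\ref{thm:shen_D_dritto}. The only (harmless) deviations are that you handle the singular drift $u^{-1}\langle\D u\,|\,\D\,\cdot\,\rangle$ by an interior-versus-boundary dichotomy instead of the paper's $\ep$-level-set approximation on $M_\ep=\{u\geq\ep\}$, and you conclude the rigidity by integrating $\DD u+u\go=0$ in geodesic polar coordinates rather than doubling $M$ across the totally geodesic boundary and invoking Obata's theorem on the closed double.
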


\noindent Recalling that the de Sitter solution~\eqref{eq:D} satisfies $\kappa(\pa M) = |\D u|/\umax=|\D u| \equiv 1$ on $\pa M$, the above result implies that the de Sitter triple has the least possible surface gravity among all the solutions to problem~\eqref{eq:pb_D} with connected boundary. The proof of the above statement is an elementary argument based on the Maximum Principle and will be presented in Section~\ref{sec:maxprinc}. More precisely, what we will prove in Theorem~\ref{thm:shen_D_dritto} below is that, if
a solution $(M, \go, u)$ to~\eqref{eq:pb_D} satisfies the inequality 
\begin{equation*}
\frac{|\D u|}{\umax} \,\, \leq \,\, 1  \qquad \hbox{on} \quad \pa M \, ,
\end{equation*}
then $(M, \go, u)$ is necessarily isometric to the de Sitter solution~\eqref{eq:D}. Combining this Maximum Principle argument together with the Monotonicity Formula of Subsection~\ref{sub:monotonicity_D}, we obtain a relevant enhancement of Theorem~\ref{thm:shen_D}, whose importance will be clarified in a moment. To introduce this result, we let ${\rm MAX}(u)$ be the set where the maximum of $u$ is achieved, namely
\begin{equation*}
{\rm MAX}(u)=\{p\in M \, : \, u(p)=\umax\} \, ,
\end{equation*}
and we observe that every connected component  of $M\setminus{\rm MAX} (u)$ has non-empty intersection with $\pa M$. This follows easily from the Weak Minimum Principle and it is proven in the No Island Lemma~\ref{lem:noisole} below. Our main result in the case $\Lambda > 0$ reads:
\begin{theorem}
\label{thm:main_D}
Let $(M,\go,u)$ be a solution to problem~\eqref{eq:pb_D}, let $N$ be a 
connected component of $M\setminus{\rm MAX}(u)$, and let $\pa N = \pa M \cap N$ be the non-empty and possibly disconnected boundary portion of $\pa M$ that lies in $N$. Then
\begin{equation*}
\max_{\Sigma \in \pi_0(\pa N)} \!\kappa(\Sigma) 
\,\, \geq \,\, 1   \,.\
\end{equation*} 
Moreover, if the equality holds, then, up to a normalization of $u$, the triple $(M,\go,u)$ is isometric to the de Sitter solution~\eqref{eq:D}.
\end{theorem}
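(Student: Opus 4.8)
The plan is to reduce Theorem~\ref{thm:main_D} to a rigidity statement localized to the component $N$, mirroring the global Theorem~\ref{thm:shen_D} but using only the boundary data available on $\pa N$. First I would note that, by the No Island Lemma~\ref{lem:noisole}, $\pa N\neq\emptyset$, so $\max_{\Sigma\in\pi_0(\pa N)}\kappa(\Sigma)$ is well posed; and that both halves of the statement follow once we establish the implication: \emph{if} $|\D u|_{|_\Sigma}\le\umax$ for every $\Sigma\in\pi_0(\pa N)$, \emph{then} $(M,\go,u)$ is isometric to the de Sitter solution~\eqref{eq:D}. Indeed, if $\max_{\Sigma\in\pi_0(\pa N)}\kappa(\Sigma)<1$ held, the implication would produce a de Sitter triple, whose connected horizon has $\kappa\equiv1$, a contradiction; hence the maximum is $\ge1$, and when it equals $1$ the same implication gives the rigidity. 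After this reduction we may normalise $\umax=1$.

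To prove the implication I would combine two ingredients. The first is the Maximum Principle argument of Section~\ref{sec:maxprinc} (as in Theorem~\ref{thm:shen_D_dritto}), now applied on $\overline N$: its topological boundary splits as $\pa N\cup\big(\overline N\cap{\rm MAX}(u)\big)$, where on the horizon part one has $u=0$ and $|\D u|\le\umax$ by hypothesis, while on the ${\rm MAX}(u)$ part $u=\umax$ and $\D u=0$. In either case the natural quantity $w:=|\D u|^2+u^2$ --- which is identically $\umax^2$ for the de Sitter solution --- is bounded by $\umax^2$ on $\pa\overline N$, so the Maximum Principle yields a one-sided pointwise control of $w$ inside $N$. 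This, however, is not enough for rigidity: the missing reverse information is furnished by the Monotonicity Formula of Subsection~\ref{sub:monotonicity_D}, applied to the foliation of $N$ by the level sets $\{u=t\}$.

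Concretely, I would look at the monotone function $F(t)$ produced in Subsection~\ref{sub:monotonicity_D} --- a scale-invariant integral over $\{u=t\}\cap N$ of a suitable power of $|\D u|$ weighted by a power of $t$ (or of $\umax^2-t^2$), normalised so as to be \emph{constant} along the de Sitter solution. The second-order identity $\DD u=u(\Ric-n\,\go)$, valid by the first equation in~\eqref{eq:pb_D}, together with $\RRR\equiv n(n-1)$, should pin the limit $\lim_{t\to\umax^-}F(t)$ to the universal de Sitter value, while $\lim_{t\to0^+}F(t)$ is computed from $|\D u|$ on the horizons of $N$. The assumption $|\D u|\le\umax$ on $\pa N$ then places the two boundary values of $F$ on the side opposite to its monotonicity, forcing $F$ to be constant on $(0,\umax)$. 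Invoking the equality (rigidity) case of the Monotonicity Formula, one gets that $(N,\go)$ is rotationally symmetric and isometric to the portion of the de Sitter solution between a horizon and its centre; since $u$ is analytic, an open piece of the solution determines it entirely, and one concludes that $(M,\go,u)$ is the de Sitter triple~\eqref{eq:D}.

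The main obstacle I anticipate is the analysis near ${\rm MAX}(u)$, where the level-set foliation degenerates: one must control both $F(t)$ and the geometry of $N$ as $t\to\umax^-$ without knowing a priori that ${\rm MAX}(u)$ is a point or even a smooth submanifold, and in particular one must establish (or circumvent) the non-degeneracy of the maximum, i.e.\ that $\Ric-n\,\go$ is negative definite there, which is not manifestly automatic. A close second is the sharp equality analysis in the Monotonicity Formula itself, and then the passage from the rotational symmetry of the single component $N$ to a conclusion about all of $M$, which relies on analyticity and on the fact that the de Sitter solution has connected boundary.
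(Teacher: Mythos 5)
Your reduction to the implication ``$|\D u|\le\umax$ on $\pa N$ $\Rightarrow$ de Sitter'' is exactly the paper's (Theorem~\ref{thm:main_D_dritto}), and the two ingredients you name --- the Maximum Principle estimate $|\D u|^2+u^2\le\umax^2$ on $N$ (Lemma~\ref{le:shen_aux_D}) and the monotonicity of the weighted level-set integral $U(t)$ of~\eqref{eq:Up_D} (Proposition~\ref{pro:main_D}) --- are the right ones. The gap is in your endgame. You propose to ``pin'' $\lim_{t\to\umax^-}F(t)$ to a universal de Sitter value, sandwich $F$ between its two boundary values, and invoke an equality case of the monotonicity formula. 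No such limit computation is available, and none is attempted in the paper: the behaviour of $U(t)$ as $t\to\umax^-$ depends on the a priori unknown structure of ${\rm MAX}(u)\cap\overline N$ (it is finite and computable only under nondegeneracy of the maximum, which, as you yourself note, is not automatic), and the assertion that it ``should'' equal the de Sitter value is precisely the hard analytic content you would need to supply. Moreover $U(0)=\umax^{-n}\int_{\pa N}|\D u|\,\rmd\sigma$ is not a universal constant either, so even granting a limit at $\umax$ there is no evident sandwich forcing $F$ to be constant.

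The paper's actual mechanism is different and sidesteps the degeneration at ${\rm MAX}(u)$ entirely. Proposition~\ref{pro:estimate_D} shows, via the {\L}ojasiewicz inequality and a local cylindrical-coordinate comparison of level-set areas, that \emph{if} $\mathscr{H}^{n-1}\big({\rm MAX}(u)\cap\overline N\big)>0$ \emph{then} $U(t)\to+\infty$ as $t\to\umax^-$. Since monotonicity gives $\lim_{t\to\umax^-}U(t)\le U(0)<\infty$, one concludes $\mathscr{H}^{n-1}\big({\rm MAX}(u)\cap\overline N\big)=0$; hence ${\rm MAX}(u)$ cannot disconnect $M$, so $N$ is the \emph{only} component of $M\setminus{\rm MAX}(u)$ and $\pa N=\pa M$. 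At that point the hypothesis $|\D u|\le\umax$ holds on all of $\pa M$ and the already-proved global Theorem~\ref{thm:shen_D_dritto} (Shen's identity, Strong Maximum Principle, doubling, and Obata) finishes the proof; no rigidity case of the monotonicity formula and no rotational symmetry of $N$ alone are ever invoked. To repair your argument, replace the limit computation by this measure-theoretic dichotomy, which is the genuinely new idea of the section.
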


\noindent In other words, Theorem~\ref{thm:main_D} is a localized version of Theorem~\ref{thm:shen_D}. In fact, what we will actually prove (see Theorem~\ref{thm:main_D_dritto} in Section~\ref{sec:monoform}) is that if on a single connected component $N$ of $M \setminus {\rm MAX}(u)$ it holds
\begin{equation*}
\frac{|\D u |}{\umax} \,\, \leq \,\, 1 \, \qquad \hbox{on} \quad \pa N \, ,
\end{equation*}
then the entire solution $(M, \go, u)$ must be isometric to the de Sitter solution, in particular the boundary $\pa M$ and the set ${\rm MAX}(u)$ are both connected {\em a posteriori}.

\subsection{Surface gravity and mass.}
\label{sub:surfmass}
We are now in the position to present an interpretation of both Theorem~\ref{thm:shen_D} and Theorem~\ref{thm:main_D} in terms of the mass aspect of a static solution $(M, \go, u)$. As already observed, the main conceptual issue lies in the fact that, unlike for asymptotically flat and asymptotically hyperbolic manifolds, there is no clear notion of mass, when the cosmological constant is positive. To overcome this difficulty, we are going to exploit some very basic relationships between surface gravity and mass in the case of static solutions. In doing this we are motivated by the exemplification given in Subsection~\ref{sub:prelim} for $\Lambda = 0$ as well as by the explicit role played by the mass parameter $m$ in the model solutions (see Subsection~\ref{sub:rotsol}). In particular these latter are used as reference configuration in the following definition of {\em virtual mass}. As it will be clear from the forthcoming discussion, it is also useful to use them in order to distinguish between the different characters of boundary components. For this reasons we give the following definitions.

\begin{definition}
\label{def:horiz}
Let $(M, \go, u)$ be a solution to problem~\eqref{eq:pb_D}. A connected component $\Sigma$ of $\pa M$ is called an {\em horizon}. An horizon is said to be:
\begin{itemize}
\item of {\em cosmological type} if: \qquad \,\ \ 
$\kappa (\Sigma)
\,<\, \sqrt{n}$, 
\item of {\em black hole type} if: \qquad\quad\;\;\  
$\kappa (\Sigma)
\,>\, \sqrt{n}$, 
\item of {\em cylindrical type} if: \qquad \quad\,\;\
$\kappa (\Sigma)
\,=\, \sqrt{n}$,
\end{itemize}
where $\kappa(\Sigma)$ is the {\em surface gravity} of $\Sigma$ defined in~\eqref{eq:sg}.
A connected component $N$ of $M \setminus {\rm MAX}(u)$ is called:
\begin{itemize}
\item an {\em outer region} if all of its horizons are of cosmological type, i.e., if 
$$ \max_{\Sigma \in\pi_0(\pa N)} \kappa(\Sigma) \,<\, \sqrt{n}\,,$$
\item an {\em inner region} if it has at least one horizon of black hole type, i.e., if
$$ \max_{\Sigma \in\pi_0(\pa N)} \kappa(\Sigma) \,>\, \sqrt{n}\,,$$
\item a {\em cylindrical region} if there are no black hole horizons and there is at least one cylindrical horizon, i.e., if
$$ \max_{\Sigma \in\pi_0(\pa N)} \kappa(\Sigma) \,=\, \sqrt{n}\,.$$
\end{itemize}
\end{definition}

\noindent We introduce now the concept of virtual mass of a given connected component of $M \setminus {\rm MAX}(u)$.

\begin{definition}[Virtual Mass]
\label{def:virtual_mass} Let $(M, \go, u)$ be a solution to~\eqref{eq:pb_D} and let $N$ be a connected component of $M \setminus {\rm MAX} (u)$.
The virtual mass of $N$ is denoted by $\mu(N, \go ,u)$ and it is defined in the following way:
\begin{itemize}
\item[(i)] If $N$ is an outer region, then we set
\begin{equation}
\mu (N,\go,u) \,\, = \,\, k_+^{-1} \left(  \max_{\pa N} \frac{|\D u |}{\umax}  \right) \, ,
\end{equation}
where $k_+$ is the outer surface gravity function defined in~\eqref{eq:k+}.
\smallskip
\item[(ii)] If $N$ is an inner region, then we set
\begin{equation}
\mu (N, \go , u) \,\, = \,\, k_-^{-1} \left(  \max_{\pa N} \frac{|\D u |}{\umax}  \right)\, ,
\end{equation}
where $k_-$ is the inner surface gravity function defined in~\eqref{eq:k-}.
\end{itemize}
\end{definition}
\noindent In other words, the virtual mass of a connected component $N$ of $M\setminus {\rm MAX} (u)$ can be thought as the mass (parameter) that on a model solution would be responsible for (the maximum of) the surface gravity measured at $\pa N$. In this sense the rotationally symmetric solutions described in Subsection~\ref{sub:rotsol} are playing here the role of reference configurations. As it is easy to check, if $(M, \go , u)$ is either the de Sitter, or the Schwarzschild--de Sitter, or the Nariai  solution, then the virtual mass coincides with the explicit mass parameter $m$ that appears in Subsection~\ref{sub:rotsol}.

It is very important to notice that the well--posedness of the above definition for outer regions is not {\em a priori} guaranteed. In fact, one would have to check that, for any given solution $(M,\go ,u)$ to~\eqref{eq:pb_D}, the quantity $\max_{\pa N} | \D u |/\umax$ lies in the domain of definition of the function $k_+^{-1}$, namely in the real interval $[1,\sqrt{n})$. 
This is the content of the following Positive Mass Statement, whose proof is a direct consequence of Theorem~\ref{thm:main_D}.

%
%
%
%
%
%
%
%

\begin{theorem}[Positive Mass Statement for Static Metrics with Positive Cosmological Constant]
\label{thm:PMS}
Let $(M,\go,u)$ be a solution to problem~\eqref{eq:pb_D}. Then, every connected component of $M\setminus{\rm MAX}(u)$ has well--defined and thus nonnegative virtual mass. Moreover, as soon as the virtual mass of some connected component vanishes, the entire solution $(M, \go,u)$ is isometric to the de Sitter solution~\eqref{eq:D}, up to a normalization of $u$.
\end{theorem}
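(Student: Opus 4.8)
The plan is to deduce the statement directly from Theorem~\ref{thm:main_D}, using only the elementary properties of the functions $k_+$ and $k_-$ recorded after~\eqref{eq:k+} and~\eqref{eq:k-}. First I would fix a connected component $N$ of $M\setminus{\rm MAX}(u)$ and recall, via the No Island Lemma~\ref{lem:noisole}, that $\pa N = \pa M\cap N$ is non-empty; since $|\D u|$ is locally constant on $\pa M$, the quantity $\max_{\pa N}|\D u|/\umax$ coincides with $\max_{\Sigma\in\pi_0(\pa N)}\kappa(\Sigma)$ and is a well-defined real number. The only issue for the well-posedness of Definition~\ref{def:virtual_mass} is whether this number lies in the domain of the relevant inverse function. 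If $N$ is an inner region, then $\max_{\Sigma\in\pi_0(\pa N)}\kappa(\Sigma)>\sqrt{n}$ by definition, which already belongs to the interval $[\sqrt{n},+\infty)$ on which $k_-^{-1}$ is defined; hence $\mu(N,\go,u)=k_-^{-1}(\max_{\pa N}|\D u|/\umax)$ makes sense, and since $k_-$ takes values in $(0,\mmax]$ it is in fact strictly positive. If instead $N$ is an outer region, then $\max_{\Sigma\in\pi_0(\pa N)}\kappa(\Sigma)<\sqrt{n}$ by definition, so it suffices to show that this maximum is $\geq 1$, and this is precisely the inequality provided by Theorem~\ref{thm:main_D}. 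Thus $\max_{\pa N}|\D u|/\umax\in[1,\sqrt{n})$, the domain of $k_+^{-1}$, and $\mu(N,\go,u)=k_+^{-1}(\max_{\pa N}|\D u|/\umax)\in[0,\mmax)$ is well-defined and nonnegative. A cylindrical region gives $\max_{\Sigma\in\pi_0(\pa N)}\kappa(\Sigma)=\sqrt{n}$, which again lies in the domain of $k_-^{-1}$ and yields $\mu(N,\go,u)=\mmax>0$.

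For the rigidity statement, assume $\mu(N,\go,u)=0$ for some connected component $N$. By the discussion above, $N$ can be neither an inner region nor a cylindrical region, since in those cases the virtual mass is strictly positive; hence $N$ is an outer region and $\mu(N,\go,u)=k_+^{-1}(\max_{\pa N}|\D u|/\umax)=0$. Since $k_+$ is continuous and strictly increasing with $k_+(0)=1$, its inverse $k_+^{-1}$ is injective and vanishes only at the point $1$, so $\max_{\Sigma\in\pi_0(\pa N)}\kappa(\Sigma)=1$; that is, equality holds in Theorem~\ref{thm:main_D}. The rigidity part of that theorem then yields, up to a normalization of $u$, that $(M,\go,u)$ is isometric to the de Sitter solution~\eqref{eq:D}.

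Both steps amount to bookkeeping once Theorem~\ref{thm:main_D} is available, so I do not expect a serious obstacle. The only points requiring attention are to keep the domains of $k_+^{-1}$ and $k_-^{-1}$ aligned with the trichotomy of Definition~\ref{def:horiz} (observing in particular that the lower bound $\geq 1$ is needed only for outer regions, where the surface gravity is $<\sqrt{n}$, and is automatic otherwise), and to invoke the localized Theorem~\ref{thm:main_D}, valid on each connected component $N$ separately, rather than the global Theorem~\ref{thm:shen_D}, so that the conclusion applies region by region.
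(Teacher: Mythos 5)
Your proposal is correct and is exactly the argument the paper intends: the paper presents Theorem~\ref{thm:PMS} as a direct consequence of Theorem~\ref{thm:main_D}, and your bookkeeping with the domains and monotonicity of $k_+$ and $k_-$ (including the observation that the lower bound $\geq 1$ is only needed for outer regions, and that zero virtual mass forces the region to be outer with $\max\kappa=1$, triggering the rigidity case) fills in precisely the intended details. The only point worth noting is that Definition~\ref{def:virtual_mass} as written covers only outer and inner regions, so your assignment of $\mmax=k_-^{-1}(\sqrt{n})$ to cylindrical regions is a (natural and correct) extension rather than something literally contained in the definition.
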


\noindent In order to justify the terminology employed, it is useful to put the above result in correspondence with the classical statement of the Positive Mass Theorem for asymptotically flat manifolds with nonnegative scalar curvature. In this perspective it is clear that in our context the connected components of $M \setminus {\rm MAX} (u)$ play the same role as the asymptotically flat ends of the classical situation. In fact, the virtual mass is well defined and nonnegative on every single connected component, in perfect analogy with the 
ADM mass of every single asymptotically flat end. This correspondence holds true also for the rigidity statements. In fact, as soon as the mass (either virtual or ADM) annihilates on one single piece, the whole manifold must be isometric to the model solution with zero mass (either de Sitter or Minkowski).

Another important observation comes from the fact that the above statement should be put in contrast with the so called Min-Oo conjecture, which asserts that a compact Riemannian manifold $(M^n,g)$, whose boundary is isometric to $\Sph^{n-1}$ and totally geodesic, must be isometric to the standard round hemisphere $(\Sph^n_+, g_{\Sph^n})$, provided $\RRR_g \geq n(n-1)$. For long time, this conjecture has been considered as the natural counterpart of the rigidity statement of the Positive Mass Theorem in the case of positive cosmological constant. However, it has finally been disproved in a remarkable paper~\cite{Bre_Mar_Nev} by Brendle, Marques and Neves (we refer the reader to~\cite{Bre_Mar_Nev} also for a comprehensive introduction to the partial affirmative answers to the Min-Oo conjecture). 
In contrast with this, our Positive Mass Statement seems to indicate -- at least in the case of static solutions -- a different possible approach towards the extension of the classical Positive ADM Mass Theorem to the context of positive cosmological constant. In this perspective, it would be very interesting to see if the above statement could be extended to a broader class of metrics of physical relevance, leading to a more comprehensive definition of mass. The first step in this direction would be to consider the case of stationary solutions to the Einstein Field Equations with Killing horizons, so that the concept of surface gravity is well defined (see Appendix~\ref{sec:surf_grav}). This will be the object of further investigations.

%

\subsection{Area bounds.}
Further evidences in favour of the {\em virtual mass} will be presented in the forthcoming paper~\cite{Bor_Maz_2-II}, where sharp area bounds will be obtained for horizons of black hole and cosmological type, the equality case being characterised by the Schwarzschild--de Sitter solution~\eqref{eq:SD}.
In order to anticipate these results, we discuss in this subsection a local version of the following well known integral inequality
$$
0 \,\,\, \leq \, \int_{\pa M}\!\!|\D u|\left[\RRR^{\pa M}-(n-1)(n-2)\right]\rmd\sigma\,,
$$
which was proved by Chrusciel~\cite{Chrusciel_2} (see also~\cite{Hij_Mon_Rau}) generalizing the method introduced by Boucher, Gibbons and Horowitz in~\cite{Bou_Gib_Hor}. 

\begin{theorem}
\label{thm:BGH_rev}
Let $(M,\go,u)$ be a solution to problem~\eqref{eq:pb_D}, let $N$ be a 
connected component of $M\setminus{\rm MAX}(u)$, and let $\pa N = \pa M \cap N$ be the non-empty and possibly disconnected boundary portion of $\pa M$ that lies in $N$. Suppose also that $|\D u|^2\leq C(\umax-u)$ on the whole $M$, for some constant $C\in\R$. Then it holds
\begin{equation}
\label{eq:BGH_rev}
0 \,\,\, \leq \, \int_{\pa N}\!\!|\D u|\left[\RRR^{\pa N}-(n-1)(n-2)\right]\rmd\sigma\,,
\end{equation}
where $\RRR^{\pa N}$ is the scalar curvature of the metric induced by $\go$ on $\pa N$. Moreover, if the equality holds, then, up to a normalization of $u$, the triple $(M,\go,u)$ is isometric to the de Sitter solution~\eqref{eq:S}.
\end{theorem}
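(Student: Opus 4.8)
Here is the strategy I would follow.

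The plan is to recast the integral in~\eqref{eq:BGH_rev} as the boundary term of a monotone quantity built on $N$ from the level sets of $u$, in the spirit of the Monotonicity Formula of Subsection~\ref{sub:monotonicity_D} and of the Boucher--Gibbons--Horowitz computation. First I would introduce the auxiliary function $w := |\D u|^2 + u^2$ and derive, from the Bochner identity together with the two field equations in~\eqref{eq:pb_D} (which give $\D\De u = -n\,\D u$, $\DD u(\D u,\D u) = u\,\Ric(\D u,\D u) - n\,u\,|\D u|^2$, and $|\DD u|^2 = |\DD u + u\,\go|^2 + n\,u^2$, where $\DD u + u\,\go$ is the trace-free part of the Hessian, since $\De u = -nu$), the pointwise divergence identity
\[
{\rm div}\!\left(\frac{1}{u}\,\D w\right) \,\,=\,\, \frac{2}{u}\,\big|\DD u + u\,\go\big|^2 \,\,\geq\,\, 0 \qquad \hbox{in } M\setminus\pa M\,.
\]
This is the only genuine computation; everything afterwards is bookkeeping on level sets. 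I would also record, again from~\eqref{eq:pb_D}, that $\langle\D w\,|\,\D u\rangle = 2u\,\Ric(\D u,\D u) - 2(n-1)u\,|\D u|^2$.

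Next, I would fix a connected component $N$ of $M\setminus{\rm MAX}(u)$ and, for a regular value $t\in(0,\umax)$, consider the compact hypersurface $\Sigma_t := \{u=t\}\cap N$ (it accumulates neither at $\pa N$, where $u=0$, nor at ${\rm MAX}(u)$, where $u=\umax$), with unit normal $\nu = \D u/|\D u|$. Integrating the identity above over $\{s<u<t\}\cap N$ between two regular values $0<s<t<\umax$ (the set $\{u<s\}$ is avoided, so $1/u$ is smooth there; recall that the critical values of $u$ are discrete by analyticity), I would obtain that the quantity
\[
G(t) \,\,:=\,\, \frac{1}{t}\int_{\Sigma_t}\frac{\langle\D w\,|\,\D u\rangle}{|\D u|}\,\rmd\sigma \,\,=\,\, 2\int_{\Sigma_t}|\D u|\,\big[\Ric(\nu,\nu) - (n-1)\big]\,\rmd\sigma
\]
is non-decreasing in $t$ along regular values.

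It then remains to evaluate the two endpoint limits of $G$. As $t\to 0^+$ the slices $\Sigma_t$ converge smoothly to the regular, totally geodesic level set $\pa N$; since $\RRR\equiv n(n-1)$, the Gauss equation gives $\Ric(\nu,\nu)|_{\pa N} = \tfrac12\big[n(n-1) - \RRR^{\pa N}\big]$, whence $\lim_{t\to 0^+}G(t) = -\int_{\pa N}|\D u|\,[\RRR^{\pa N} - (n-1)(n-2)]\,\rmd\sigma$. As $t\to\umax^-$, the hypothesis $|\D u|^2\leq C(\umax-u)$ forces $|\D u|\to 0$ uniformly on $\Sigma_t$, so that $G(t)\to 0$ once one knows that the areas $|\Sigma_t|$ remain bounded as $t\to\umax$. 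Monotonicity of $G$ then yields $\lim_{t\to 0^+}G(t)\leq\lim_{t\to\umax^-}G(t)=0$, which is precisely~\eqref{eq:BGH_rev}. I expect the genuinely delicate point to be this last limit: one must rule out pathological behaviour of the (possibly singular) set ${\rm MAX}(u)$ and of the nearby level sets, and it is exactly here that both the growth hypothesis on $|\D u|^2$ and the analyticity of $u$ are used --- this being the same kind of analysis already carried out in the proof of Theorem~\ref{thm:main_D}, so I would import it rather than redo it.

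For the rigidity statement, assume equality holds in~\eqref{eq:BGH_rev}. Then $G$ is constant, forcing $\DD u + u\,\go\equiv 0$ on $N$, and by analyticity on all of $M$, i.e. $\DD u = -u\,\go$. Substituting into the first equation of~\eqref{eq:pb_D} gives $u\,\Ric = (n-1)u\,\go$, hence $\Ric = (n-1)\go$ and $(M,\go)$ is Einstein; since $u$ is non-constant with pure-trace Hessian, its maximum point is an isolated interior point, around which $u$ is concircular, the metric is the warped product $\D r\otimes\D r + \sin^2 r\;g_{\Sph^{n-1}}$ and $u = \umax\cos r$, with $r$ ranging in $[0,\pi/2]$ because $u\geq 0$ and $\pa M=\{u=0\}$. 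Therefore $(M,\go,u)$ is, up to a normalization of $u$, the de Sitter solution~\eqref{eq:D}; this final identification of the concircular solution is the classical Obata/Tashiro-type argument.
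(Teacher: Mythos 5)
Your overall architecture is the same as the paper's: the divergence identity you derive is exactly Shen's identity (Proposition~\ref{pro:div_BGH_AD}) specialised via $\De u=-nu$, you integrate it between two regular level sets in $N$, identify the flux as $2|\D u|\big[\Ric(\nu,\nu)-(n-1)\big]$, send the lower level to $\pa N$ and use the Gauss equation on the totally geodesic boundary. The rigidity sketch (vanishing of the trace-free Hessian, then Obata/Tashiro) also matches what the paper does, and the paper itself only sketches that part.

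The one genuine weak point is exactly the step you flag as delicate, and your proposed fix does not work as stated. You want $G(t)\to 0$ as $t\to\umax^-$ by combining $|\D u|\le\sqrt{C(\umax-t)}$ with boundedness of the areas $|\Sigma_t|$, and you propose to ``import'' that boundedness from the proof of Theorem~\ref{thm:main_D}. But that proof contains no upper bound on $|\{u=t\}\cap N|$ near $\umax$: Proposition~\ref{pro:estimate_D} goes in the opposite direction (a \emph{lower} bound on these areas when $\mathscr{H}^{n-1}({\rm MAX}(u)\cap\overline N)>0$), and the coarea formula only gives $\int_0^{\umax}|\Sigma_t|\,\rmd t<\infty$, i.e.\ finiteness for a.e.\ $t$, not boundedness as $t\to\umax$. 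The paper avoids the issue entirely: it only needs $\liminf_{t\to\umax^-}\int_{\Sigma_t}|\D u|\,\rmd\sigma=0$, which it proves by contradiction via the coarea formula --- if this quantity stayed above $\delta>0$, then
\begin{equation*}
\int_{\{\umax-\alpha<u<\umax-\ep\}\cap N}\frac{|\D u|^2}{\umax-u}\,\rmd\mu
\;=\;\int_{\umax-\alpha}^{\umax-\ep}\frac{\rmd t}{\umax-t}\int_{\Sigma_t}|\D u|\,\rmd\sigma
\;\geq\;\frac{\delta}{2}\int_{\ep}^{\alpha}\frac{\rmd\tau}{\tau}
\end{equation*}
would diverge as $\ep\to0$, while the hypothesis $|\D u|^2\le C(\umax-u)$ bounds the left-hand side by $C|N|$. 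Since $\Ric(\nu,\nu)-(n-1)$ is bounded on the compact $M$, this subsequential vanishing of $\int_{\Sigma_t}|\D u|\,\rmd\sigma$ suffices to take the $\liminf$ in your monotonicity inequality and conclude. Replace your uniform-convergence-plus-bounded-area argument with this coarea argument and the proof is complete.
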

\noindent The proof of this result follows closely the one presented in~\cite[Section~6]{Chrusciel_2}, see Subsection~\ref{sub:BGH_rev} for the details. The technical hypotesis $|\D u|^2\leq C(\umax-u)$ is useful in order to simplify the proof, but it can be removed at the cost of some more work (in fact, in~\cite{Bor_Maz_2-II} we will prove that this hypotesis is satisfied by any solution of~\eqref{eq:pb_D}).

In order to emphasize the analogy with the forthcoming results in the case of conformally compact static solutions with negative cosmological constant (see Corollary~\ref{cor:consequences_ALAdS} below), it is useful to single out the following straightforward consequence of the above theorem.

\begin{corollary}
\label{cor:BGH_rev}
Let $(M,\go,u)$ be a solution to problem~\eqref{eq:pb_D}, let $N$ be a 
connected component of $M\setminus{\rm MAX}(u)$, and let $\pa N = \pa M \cap N$ be the non-empty and possibly disconnected boundary portion of $\pa M$ that lies in $N$. Suppose also that $|\D u|^2\leq C(\umax-u)$ on the whole $M$, for some constant $C\in\R$. Then, if the inequality
\begin{equation*}
\RRR^{\pa N}\,\,\leq\,\,(n-1)(n-2) 
\end{equation*}
holds on $\pa N$,
where $\RRR^{\pa N}$ is the scalar curvature of the metric induced by $\go$ on $\pa N$, then, up to a normalization of $u$, the triple $(M,\go,u)$ is isometric to the de Sitter solution~\eqref{eq:S}.
\end{corollary}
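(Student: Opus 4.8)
The plan is to obtain Corollary~\ref{cor:BGH_rev} as an immediate consequence of Theorem~\ref{thm:BGH_rev} via a sign argument. First I would recall from the list of basic properties of static solutions in Subsection~\ref{sub:prelim} that $|\D u|$ is locally constant and \emph{strictly positive} on $\pa M$, hence in particular on the nonempty boundary portion $\pa N = \pa M \cap N$. Consequently, under the hypothesis $\RRR^{\pa N} \le (n-1)(n-2)$ on $\pa N$, the integrand on the right-hand side of~\eqref{eq:BGH_rev} is pointwise non-positive:
\begin{equation*}
|\D u|\left[\,\RRR^{\pa N} - (n-1)(n-2)\,\right] \,\, \le \,\, 0 \qquad \hbox{on } \pa N\,,
\end{equation*}
so that $\int_{\pa N} |\D u|\left[\RRR^{\pa N} - (n-1)(n-2)\right] \rmd\sigma \le 0$.

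On the other hand, the technical assumption $|\D u|^2 \le C(\umax - u)$ on $M$ is exactly the hypothesis needed to apply Theorem~\ref{thm:BGH_rev}, which yields the reverse inequality $0 \le \int_{\pa N} |\D u|\left[\RRR^{\pa N} - (n-1)(n-2)\right] \rmd\sigma$. Combining the two estimates forces the integral to vanish, i.e., equality holds in~\eqref{eq:BGH_rev}. Then the rigidity part of Theorem~\ref{thm:BGH_rev} applies and gives that, up to a normalization of $u$, the triple $(M,\go,u)$ is isometric to the de Sitter solution, as claimed.

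There is no real obstacle in this argument; the only point requiring a small amount of attention is verifying that the integrand is pointwise non-positive, which relies on the strict positivity of $|\D u|$ along the horizons contained in $\pa N$ and not merely on the sign of $\RRR^{\pa N} - (n-1)(n-2)$. One may also note in passing that $\RRR^{\pa N}$ is a well-defined continuous function on $\pa N$ since $\pa M = \{u = 0\}$ is a regular level set of $u$, so that the induced metric on $\pa N$ is smooth. Once these observations are made, the corollary follows at once from the theorem together with its equality case, and essentially nothing beyond Theorem~\ref{thm:BGH_rev} is used.
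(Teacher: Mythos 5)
Your argument is correct and is exactly the intended one: the paper itself presents Corollary~\ref{cor:BGH_rev} as a ``straightforward consequence'' of Theorem~\ref{thm:BGH_rev}, obtained by noting that the hypothesis on $\RRR^{\pa N}$ makes the integrand in~\eqref{eq:BGH_rev} pointwise non-positive, so the integral must vanish and the rigidity case applies. (The only cosmetic remark is that non-negativity of $|\D u|$ already suffices for the sign argument; strict positivity is not actually needed there.)
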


\noindent
If we assume that $\pa N$ is connected and orientable, and that $n=3$ in Theorem~\ref{thm:BGH_rev}, then $|\D u|$ is constant on the whole $\pa N$, and from the Gauss-Bonnet Theorem we have $\int_{\pa N}\RRR^{\pa N}\rmd\sigma=4\pi\chi(\pa N)$. Therefore, with these additional hypoteses, the thesis of Theorem~\ref{thm:BGH_rev} translates into
$$
|\pa N|\,\leq\,2\pi\,\chi(\pa N)\,,
$$
where $|\pa N|$ is the hypersurface area of $\pa N$ with respect to the metric $\go$.
In particular, $\chi(\pa N)$ has to be positive, which implies that $\pa N$ is diffeomorphic to a sphere and $\chi(\pa N)=2$. This proves the following corollary.
\begin{corollary}
\label{cor:BGH_local}
Let $(M,\go,u)$ be a $3$-dimensional orientable solution to problem~\eqref{eq:pb_D}, let $N$ be a 
connected component of $M\setminus{\rm MAX}(u)$, and suppose that $\pa N = \pa M \cap N$ is connected. Suppose also that $|\D u|^2\leq C(\umax-u)$ on the whole $M$, for some constant $C\in\R$. Then $\pa N$ is a sphere and it holds
\begin{equation}
\label{eq:BGH_local}
|\pa N|\,\,\leq\,\, 4\pi\,.
\end{equation}
Moreover, if the equality holds then, up to a normalization of $u$, the triple $(M,\go,u)$ is isometric to the de Sitter solution~\eqref{eq:S}.
\end{corollary}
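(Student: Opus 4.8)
The plan is to derive everything from Theorem~\ref{thm:BGH_rev}, which already carries the analytic content, and then to convert its conclusion into an area bound by a direct application of the Gauss--Bonnet Theorem. First I would verify that the hypotheses of Theorem~\ref{thm:BGH_rev} are all in force here: $(M,\go,u)$ solves~\eqref{eq:pb_D}, $N$ is a connected component of $M\setminus{\rm MAX}(u)$, and the technical bound $|\D u|^2\leq C(\umax-u)$ holds on $M$. Hence inequality~\eqref{eq:BGH_rev} is available, and specializing it to $n=3$, in which case $(n-1)(n-2)=2$, it reads $0\leq\int_{\pa N}|\D u|\,[\RRR^{\pa N}-2]\,\rmd\sigma$.

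Next I would exploit the two--dimensionality of $\pa N$. Recall from the preliminary discussion in Subsection~\ref{sub:prelim} that $|\D u|$ is locally constant and strictly positive along $\pa M$; since $\pa N=\pa M\cap N$ is assumed connected, this means $|\D u|\equiv c$ on $\pa N$ for a single constant $c>0$, which can therefore be pulled out of the integral. Moreover $\pa N$ is a closed orientable surface, being a connected component of the regular level set $\{u=0\}$ inside the orientable manifold $M$; writing the scalar curvature of a surface as twice its Gauss curvature, the Gauss--Bonnet Theorem then yields $\int_{\pa N}\RRR^{\pa N}\,\rmd\sigma=4\pi\chi(\pa N)$. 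Combining these two observations with the displayed inequality and dividing by $2c>0$ gives $|\pa N|\leq 2\pi\chi(\pa N)$. Since $|\pa N|>0$, this forces $\chi(\pa N)>0$, and as the sphere is the only closed orientable surface with positive Euler characteristic, $\pa N$ must be diffeomorphic to $\Sph^2$ with $\chi(\pa N)=2$; this is precisely the bound $|\pa N|\leq 4\pi$ in~\eqref{eq:BGH_local}.

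For the rigidity part I would argue that if $|\pa N|=4\pi$ then, using $\chi(\pa N)=2$, the chain of inequalities collapses and the integral in~\eqref{eq:BGH_rev} vanishes; the equality case of Theorem~\ref{thm:BGH_rev} then identifies $(M,\go,u)$, up to a normalization of $u$, with the de Sitter solution~\eqref{eq:D}. I do not anticipate a real obstacle in this corollary: the only points that need a little care are the reduction to a single positive constant $c$ on the connected surface $\pa N$ and the verification that $\pa N$ is genuinely a closed orientable surface, so that Gauss--Bonnet applies in the clean form used above. Everything else is bookkeeping, the substantive estimate being entirely contained in Theorem~\ref{thm:BGH_rev}.
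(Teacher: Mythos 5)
Your proof is correct and follows essentially the same route as the paper: specialize Theorem~\ref{thm:BGH_rev} to $n=3$, use that $|\D u|$ is a single positive constant on the connected horizon $\pa N$ to pull it out of the integral, apply Gauss--Bonnet to get $|\pa N|\leq 2\pi\chi(\pa N)$, and conclude $\chi(\pa N)=2$. The rigidity step via the vanishing of the integral and the equality case of Theorem~\ref{thm:BGH_rev} is also exactly what the paper intends.
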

\noindent This corollary is a local version of the well known Boucher-Gibbons-Horowitz inequality~\cite[formula~(3.1)]{Bou_Gib_Hor}. In the forthcoming paper~\cite{Bor_Maz_2-II}, we are going to prove stronger versions of both inequality~\eqref{eq:BGH_rev} and~\eqref{eq:BGH_local}. In particular, we will show that, if $(M,\go,u)$ is a $3$-dimensional solution to problem~\eqref{eq:pb_D}, and $N$ is an outer (respectively, inner) 
region of $M\setminus{\rm MAX}(u)$ in the sense of  Definition~\ref{def:horiz}, with connected boundary $\pa N = \pa M \cap N$ and virtual mass $m=\mu(N,\go,u)$, then it holds
\begin{equation}
\label{eq:RP}
|\pa N|\,\,\leq\,\, 4\pi r_+^2(m)\,, \quad \hbox{(respectively, $|\pa N|\,\,\leq\,\, 4\pi r^2_-(m)$\,)},
\end{equation}
where $0\leq r_-(m)\leq r_+(m)\leq 1$ are the two nonnegative solutions of $1-x^2-2m/x=0$.
The first inequality in~\eqref{eq:RP} is an area bound for cosmological horizons, whereas the second inequality in~\eqref{eq:RP} can be seen as a Riemannian Penrose-like inequality for black hole horizons. 
In order to justify the latter terminology we recall that, in the case $\Lambda=0$, the well--known $3$--dimensional Riemannian Penrose Inequality~\cite[formula~(0.4)]{Hui_Ilm} can be written as $|\Sigma|\leq 4\pi r_0^2(m)$, where $\Sigma$ is {\em any} connected component of $\pa M$, $m$ is the ADM mass and $r_0(m)=2m$ is the Schwarzschild radius.
%
%
Starting from inequality~\eqref{eq:RP}, also a Black Hole Uniqueness Statement will be proven, provided e.g. the set ${\rm MAX}(u)$ is a two sided regular hypersurface that divides $M$ into an inner region and an outer region, whose virtual mass is controlled by the one of the inner region.

\section{\texorpdfstring{Analogous results in the case $\Lambda<0$.}{Analogous results in the case Lambda<0.}} 
\label{sec:settingandstatement_A}

\noindent In this section we discuss the case $\Lambda<0$. While this is not the main topic of this work, it is remarkable that the same techniques used in the following sections to prove Theorems~\ref{thm:shen_D} and~\ref{thm:main_D}, can be easily adapted to prove uniqueness results for the Anti de Sitter triple~\eqref{eq:A}. 
Nevertheless, the reader interested only to the case $\Lambda>0$ can skip this section entirely.

The results that we will prove in this section, namely Theorems~\ref{thm:shen_A} and~\ref{thm:main_A}, seem to be in line with the uniqueness result proved by Case in~\cite{Case}, which states that, in the case $\Lambda=0$, a complete three dimensional static metric without boundary is covered by the Minkowski triple~\eqref{eq:M} (in general, for $n\geq4$, the conclusion is that such a metric must be Ricci flat). 
We observe that, while in the case $\Lambda=0$ no hypotesis on the behavior at infinity of the solutions was required, in the case $\Lambda<0$ we cannot expect our uniqueness results to remain true without additional assumptions. In fact, the Anti de Sitter triple is not the only solution to~\eqref{eq:pb_A}. Another one is the Anti Nariai triple~\eqref{eq:cylsol_A} described in Appendix~\ref{sec:appB}, and we also point out that the existence of an infinite family of conformally compact solutions has been proven in~\cite{And_Chr_Del-I,And_Chr_Del-II} (see Subsection~\ref{sub:comp_with_other_charact} for some more details).
To rule these solutions out and obtain uniqueness statements for the Anti de Sitter triple, we suggest the following possibility. Recalling the asymptotic behaviour~\eqref{eq:asympt_AdS_A} that is expected on the model solution~\eqref{eq:A}
\begin{equation*}
\lim_{|x|\to\infty}\left(u^2-\umin^2-|\D u|^2\right)\,\,=\,\,0\,,
\end{equation*}
and looking at this formula as to a necessary condition, we are going to show that it also yields a fairly neat sufficient condition in order to conclude that a complete static triple with $\Lambda<0$ is isometric to the Anti de Sitter solution. Formally, this translates in the characterisation of the equality case in formul\ae~\eqref{eq:shen_A} and~\eqref{eq:main_A} below.


\subsection{Uniqueness results for the Anti de Sitter solution.}

In analogy with the properties of the Anti de Sitter triple~\eqref{eq:A} described in Subsection~\ref{sub:rotsol}, it is natural to restrict our attention to static solutions $(M,\go,u)$ with negative cosmological constant such that the manifold $M$ is non-compact and with empty boundary. We point out that the latter assumption, which is unavoidable in the present framework, excludes {\em a priori} the family of the Schwarzschild--Anti de Sitter solutions~\eqref{eq:SA} from our treatment. 
For simplicity, we will also suppose that the number of ends of $M$ is finite. We recall (see for instance~\cite[Section~3.1]{Guilbault}) that the ends of $M$ are defined as the sequences $U_1\supset U_2\supset\dots$, where, for every $i\in\N$, $U_i$ is an unbounded connected component of $M\setminus K_i$ and $\{K_i\}$ is an exhaustion by compact sets of $M$. It is easy to see that the definition of end does not really depend on the choice of the exhaustion by compact sets, in the sense that there is a clear one-to-one correspondence between the ends of $M$ defined with respect to two different exhaustions. We emphasize the fact that -- in contrast with other characterisations of the Anti de Sitter solution -- we are not making any {\em a priori} assumption on the topology of the ends, as it is explained in Figure~\ref{fig:ends_A} and the discussion below.
Starting from system~\eqref{eq:SES}, and rescaling $\go$ as in Subsection~\ref{sub:rotsol}, we are led to study the following problem

\begin{equation}
\label{eq:pb_A}
\begin{dcases}
u\,\Ric=\DD u-n\,u\,\go, & \mbox{in } M\\
\ \;\,\De u=n\, u, & \mbox{in } M\\
\ \ \ \ \; u>0, & \mbox{in }  M \\
\  u(x)\rightarrow +\infty & \mbox{as } x\rightarrow \infty
\end{dcases}
 \qquad  \hbox{with} \quad \pa M\,=\,\emptyset \quad \hbox{and} \quad \RRR\equiv -n(n-1)\, .
\end{equation}

\begin{figure}
	\centering
	\includegraphics[scale=2.0]{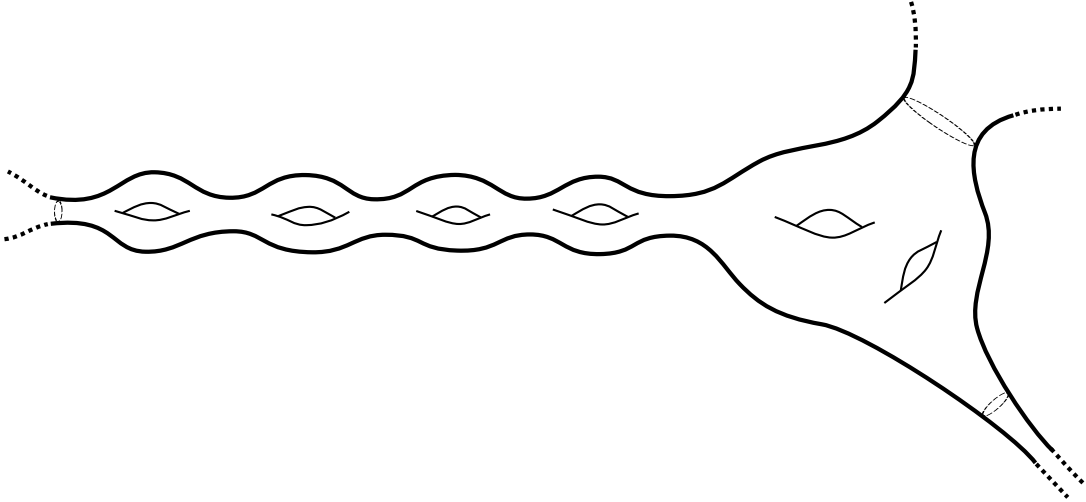}
\caption{\small
The ends of the solutions to~\eqref{eq:pb_A} are usually assumed to be diffeomorphic to a product. However, our analysis does not exclude {\em a priori} more peculiar topologies, like the end represented on the left hand side of the figure.
} 
\label{fig:ends_A}
\end{figure}

\noindent With the notation $u(x) \to + \infty$ as $x\to\infty$, we mean that, 
given an exhaustion of $M$ by compact sets $\{K_i\}_{i\in\N}$, we have that for any sequence of points $x_i\in M\setminus K_i$, $i\in\N$, it holds $\lim_{i\to+\infty} u(x_i)=+\infty$. Recalling the notation $\umin=\min_M u$, we are now able to state our first result in the $\Lambda <0$ case. The proof follows the same line as the one of Theorem~\ref{thm:shen_D} in the $\Lambda >0$ case.

\begin{theorem}
	\label{thm:shen_A}
	Let $(M,\go,u)$ be a solution to problem~\eqref{eq:pb_A}. Then
\begin{equation}
\label{eq:shen_A}
\liminf_{x\to\infty}\left(u^2-\umin^2-|\D u|^2\right)(x)\,\leq\, 0\,.
\end{equation}
Moreover, if the equality holds then, up to a normalization of $u$, the triple $(M,\go,u)$ is isometric to the Anti de Sitter solution~\eqref{eq:A}.
\end{theorem}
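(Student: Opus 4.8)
The plan is to mimic the maximum-principle argument used for Theorem~\ref{thm:shen_D} in the de Sitter case, replacing the role played by the boundary $\pa M=\{u=0\}$ with the role played by the ends where $u\to+\infty$. The natural auxiliary function to consider is
\begin{equation*}
W \,\,=\,\, u^2 - \umin^2 - |\D u|^2 \,,
\end{equation*}
which by~\eqref{eq:asympt_AdS_A} vanishes at infinity on the Anti de Sitter model. First I would compute $\De W$ (or more precisely the behaviour of $W$ under a suitable second-order operator, possibly a drift Laplacian $\De - \langle \D \log u \,|\, \D \,\cdot\, \rangle$ or $u\De - $ something, analogous to what is done for the Robinson–Shen identity) using the two equations in~\eqref{eq:pb_A}. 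The key computation is the Bochner-type formula for $|\D u|^2$: from $\nana u = u\Ric + nu\go$ one gets $\tfrac12 \De |\D u|^2 = |\nana u|^2 + \langle \D u \,|\, \D\De u\rangle + \Ric(\D u,\D u) = |\nana u|^2 + n|\D u|^2 + \tfrac1u|\nana u|^2 - \tfrac{n}{u}|\D u|^2$, so that the trace-free part of $\nana u$ controls the relevant combination. Together with $\tfrac12\De u^2 = |\D u|^2 + u\De u = |\D u|^2 + nu^2$, one should find that $W$ satisfies an elliptic differential inequality of the form $\mathcal{L} W \geq 0$ (a subsolution), where $\mathcal{L}$ is an operator whose zeroth-order coefficient has a favourable sign, and where equality at one point forces $\nana u$ to be pure trace, i.e. $\nana u = \tfrac{\De u}{n}\go$, which in turn forces the metric to be (locally) a warped product / space form.

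Second, I would exploit the hypothesis $u\to+\infty$ at every end to control $W$ there. The subtle point is that $W$ itself is not obviously bounded — both $u^2$ and $|\D u|^2$ blow up — so I cannot simply apply a maximum principle on the noncompact manifold directly. Instead I would argue by contradiction: suppose $\liminf_{x\to\infty} W(x) = 2\delta > 0$. Then $W \geq \delta$ outside a compact set $K$. Either $W$ attains a minimum on $M$ (if $W\to +\infty$ or stays bounded below by something comparing to $\delta$) — no, rather the cleaner route is: consider $\inf_M W$. If this infimum is attained at an interior point $p_0$, the strong minimum principle applied to the subsolution $\mathcal{L}W\geq 0$ forces $W$ to be constant, hence $W\equiv \inf_M W = \liminf_\infty W > 0$, but then the rigidity in the equality-type computation still applies locally and one shows $(M,\go,u)$ is Anti de Sitter, whence $W\to 0$ at infinity, contradicting $W\equiv 2\delta>0$. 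If the infimum is not attained, one uses a version of the Omori–Yau maximum principle (valid since $\Ric$ is bounded below: from $u\Ric = \nana u - nu\go$ and the asymptotics, $\Ric$ is controlled) to produce a sequence $p_k$ with $W(p_k)\to \inf_M W$, $|\D W|(p_k)\to 0$, $\De W(p_k) \geq -1/k$; combined with $\mathcal{L}W\geq 0$ this pins down the sign and again yields the contradiction. This is the step I expect to be the main obstacle: making the noncompact maximum-principle argument rigorous, i.e. checking the Omori–Yau hypotheses and ensuring the zeroth-order terms in $\mathcal{L}$ don't spoil the sign when $W$ is only asymptotically, not globally, positive.

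Third, for the equality case, suppose $\liminf_{x\to\infty} W(x) = 0$. Then $\inf_M W \leq 0$. If $\inf_M W < 0$ it is attained at an interior minimum (since $W\to$ something $\geq 0$ along the ends in the $\liminf$ sense — here one needs a little care, but a diverging minimizing sequence would have $W\to \inf_M W<0$, again contradicting $\liminf_\infty W=0$ via Omori–Yau), so the strong minimum principle forces $W\equiv\text{const}<0$, contradicting $\liminf_\infty W=0$; hence $\inf_M W = 0$. Now either it is attained at an interior point, where the strong minimum principle gives $W\equiv 0$ and the rigidity computation ($\nana u = \tfrac{\De u}{n}\,\go$ everywhere) identifies $(M,\go,u)$ with the Anti de Sitter solution; or it is attained only asymptotically, in which case I would run the rigidity argument infinitesimally along the Omori–Yau sequence and bootstrap, or alternatively integrate the differential inequality $\mathcal{L}W\geq 0$ against a well-chosen cutoff to deduce $W\equiv 0$ directly. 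In the end the vanishing of the trace-free Hessian of $u$, together with $\De u = nu$, $\RRR \equiv -n(n-1)$, and analyticity (so that $u$ has a single critical point, the minimum, by the earlier remarks on critical level sets), forces the explicit form $\go = (1+|x|^2)^{-1}d|x|\otimes d|x| + |x|^2 g_{\Sph^{n-1}}$, $u = \sqrt{1+|x|^2}$ after the normalization $\umin = 1$, which is exactly~\eqref{eq:A}.
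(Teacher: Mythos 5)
Your overall strategy is the paper's: a Bochner/Shen identity turning $u^2-|\D u|^2$ into a solution of a drift--Laplacian inequality, a maximum principle on the noncompact manifold, and rigidity from the vanishing of the trace-free Hessian, concluding via the warped-product/Obata-type argument (the paper cites Qing, Lemma~3.3). However, the step you yourself flag as ``the main obstacle'' --- making the maximum principle work at infinity --- is where your proposal has a genuine gap, and the tool you reach for (Omori--Yau) is both unnecessary and not obviously applicable. A drift version of Omori--Yau would require, e.g., a lower bound on the (Bakry--\'Emery) Ricci curvature, and your justification that ``$\Ric$ is controlled \dots from the asymptotics'' is circular: the only asymptotic information available is $u\to+\infty$, which gives no control on $\DD u$. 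The observation that closes the gap, and which the paper uses, is that the quantity $|\D u|^2-u^2+\umin^2=-W$ vanishes identically on the nonempty compact set ${\rm MIN}(u)$ (where $\D u=0$ and $u=\umin$). Combined with the weak maximum principle applied on a compact exhaustion $\{K_i\}$ --- whose boundary values $\max_{\pa K_i}(|\D u|^2-u^2+\umin^2)$ are eventually $\leq\ep$ precisely by the hypothesis $\liminf_{x\to\infty}W\geq 0$ --- this gives $|\D u|^2-u^2+\umin^2\leq 0$ on all of $M$, so the subsolution attains its maximum value $0$ at interior points and the \emph{strong} maximum principle forces $|\D u|^2-u^2+\umin^2\equiv 0$ (then analyticity propagates this to all of $M$). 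In particular the case ``infimum not attained'' never arises, and no Omori--Yau argument is needed.

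Two further corrections to your sketch. First, a sign confusion: with $W=u^2-\umin^2-|\D u|^2$ the identity reads
\begin{equation*}
\De\big(|\D u|^2-u^2\big)-\tfrac{1}{u}\big\langle \D u\,\big|\,\D\big(|\D u|^2-u^2\big)\big\rangle\,=\,2\Big[|\DD u|^2-\tfrac{(\De u)^2}{n}\Big]\,\geq\,0\,,
\end{equation*}
so $-W$ is a \emph{sub}solution and $W$ a \emph{super}solution ($\mathcal{L}W\leq 0$, not $\mathcal{L}W\geq 0$ as you wrote); it is the supersolution property that licenses the minimum principle you invoke. There is no zeroth-order term, so no sign of a potential needs checking. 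Second, your Bochner line contains slips: $\Ric(\D u,\D u)=\tfrac1u\DD u(\D u,\D u)-n|\D u|^2$, not $\tfrac1u|\DD u|^2-\tfrac nu|\D u|^2$; the correct computation yields $\tfrac12\De|\D u|^2=|\DD u|^2+\tfrac{1}{2u}\langle\D|\D u|^2\,|\,\D u\rangle$, from which the displayed identity follows. With these repairs your argument coincides with the paper's proof of Theorem~\ref{thm:shen_A_dritto}.
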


\noindent 
To avoid ambiguity, we recall that inequality~\eqref{eq:shen_A} means that, taken an exhaustion of $M$ by compact sets $\{K_i\}_{i\in\N}$, we have that for any sequence of points $x_i\in M\setminus K_i$, $i\in\N$, it holds $\liminf_{i\to+\infty} (u^2-\umin^2-|\D u|^2)(x_i)\leq 0$.
We have already observed in~\eqref{eq:asympt_AdS_A} that the Anti de Sitter triple~\eqref{eq:A} is such that $u^2-\umin^2-|\D u|^2$ goes to zero as one approaches the end of the manifold. Therefore, Theorem~\ref{thm:shen_A} characterizes the Anti de Sitter triple among the solutions to~\eqref{eq:pb_A} as the one that maximises the left hand side of~\eqref{eq:shen_A}. In fact, what we will actually prove (see Theorem~\ref{thm:shen_A_dritto} below) is that the only solution to~\eqref{eq:pb_A} that satisfies
$$
\liminf_{x\to\infty}\left(u^2-\umin^2-|\D u|^2\right)(x)\,\,\geq\,\, 0\,,
$$
is the Anti de Sitter triple~\eqref{eq:A}.

We are now going to state a local version of Theorem~\ref{thm:shen_A}. To this end, we denote the set of the minima of $u$ as
$$
{\rm MIN}(u)\,=\,\{p\in M\,:\,u(p)=\umin\}\,,
$$
and we notice that any connected component $N$ of $M\setminus{\rm MIN}(u)$ must contain at least one of the ends of $M$ by the No Island Lemma~\ref{lem:noisole_A}. In particular, the $\liminf_{x \to \infty}$ in formula~\eqref{eq:main_A} below is completely justified.

 Arguing as in the case $\Lambda>0$, we obtain through a Maximum Principle and a suitable Monotonicity Formula the following analogue of Theorem~\ref{thm:main_D}.

\begin{theorem}
\label{thm:main_A}
Let $(M,\go,u)$ be a solution to problem~\eqref{eq:pb_A}, and let $N$ be a connected component of $M\setminus{\rm MIN}(u)$.
Then
	\begin{equation}
	\label{eq:main_A}
	\liminf_{x\in N,\,x\to\infty}\left(u^2-\umin^2-|\D u|^2\right)(x)\,\,\leq\,\, 0\,.
		\end{equation}
Moreover, if the equality holds then, up to a normalization of $u$, the triple $(M,\go,u)$ is isometric to the Anti de Sitter solution~\eqref{eq:A}.
\end{theorem}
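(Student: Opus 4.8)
The plan is to mimic the structure that (by the narrative of the introduction) underlies the proof of Theorem~\ref{thm:main_D}: combine a maximum-principle argument that yields the inequality with a monotonicity formula that produces the rigidity in the equality case. First I would reduce the statement to the contrapositive. Assume a connected component $N$ of $M\setminus{\rm MIN}(u)$ satisfies
\[
\liminf_{x\in N,\,x\to\infty}\left(u^2-\umin^2-|\D u|^2\right)(x)\,\geq\,0\,,
\]
and prove that the whole triple $(M,\go,u)$ must then be the Anti de Sitter solution~\eqref{eq:A}; in particular $M\setminus{\rm MIN}(u)$ is connected and $u^2-\umin^2-|\D u|^2\to 0$ along every end, which is exactly equality in~\eqref{eq:main_A}. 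To run this I would first record the relevant structural facts: on $N$ the function $u$ has no critical points except possibly on $\pa N\subset{\rm MIN}(u)$, and by the No Island Lemma~\ref{lem:noisole_A} quoted just above, $N$ contains at least one end, so the $\liminf$ is over a nonempty set of divergent sequences. I would also note that $\partial N\subset{\rm MIN}(u)$ is where $u$ attains its minimum over $M$, so on $\partial N$ one has $|\D u|=0$ and $u=\umin$, whence $u^2-\umin^2-|\D u|^2=0$ there.

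The key analytic input is a function $W$ built from $u$ and $|\D u|^2$ — the natural candidate here is precisely $W=u^2-\umin^2-|\D u|^2$ (or a monotone reparametrisation thereof) — for which the static equations~\eqref{eq:pb_A} imply an elliptic differential inequality of the form $\Delta_\go W \geq \langle X,\D W\rangle$ for a suitable vector field $X$, at least on $N$ away from ${\rm MIN}(u)$. Here one uses: the Bochner identity for $|\D u|^2$, the first equation of~\eqref{eq:pb_A} to replace $\Ric(\D u,\D u)$ by $\tfrac1u\langle\DD u\,\D u,\D u\rangle - n|\D u|^2$, the second equation $\De u=nu$, and the constancy of the scalar curvature $\RRR\equiv -n(n-1)$. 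Since $W=0$ on $\partial N$ and $\liminf_{x\to\infty}W\geq 0$ on the ends of $N$, a suitable form of the maximum principle on the (possibly noncompact) manifold $N$ — exhausting by compacta $K_i$ and using the boundary values on $\partial K_i\cup\partial N$ — forces $W\geq 0$ throughout $N$. Combined with the analyticity of $u$ and the discreteness of its critical level sets (cited in Subsection~\ref{sub:prelim}), one then upgrades "$W\geq0$ on $N$" to "$W\equiv0$ on $N$", or at least obtains an interior point or a limiting configuration where equality propagates. Once $W\equiv0$ on an open set, the strong maximum principle / unique continuation for the analytic function $W$ spreads the identity to all of $M$.

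With $u^2-\umin^2-|\D u|^2\equiv 0$ on $M$ in hand, the endgame is a rigidity computation: differentiating this identity gives $2u\,\D u = 2\,\DD u\,(\D u)$, i.e. $\DD u\,(\D u)=u\,\D u$, so $\D u$ is an eigenvector of $\DD u$. Feeding this back into the first equation of~\eqref{eq:pb_A} and using the second, one shows that the level sets of $u$ are totally umbilic with constant mean curvature, that $|\D u|$ is constant on each level set, and that the metric takes the warped-product form $\go = \tfrac{du^2}{u^2-\umin^2} + (u^2-\umin^2)\,g_{{\rm MIN}}$ on $M\setminus{\rm MIN}(u)$, with ${\rm MIN}(u)$ a single point forced by smooth closure at the minimum (where $|\D u|=0$); the constraint $\RRR\equiv -n(n-1)$ then pins down $(n-1)(n-2)$ as the scalar curvature of the level-set metric after the conformal rescaling, forcing the cross-section to be the round $\Sph^{n-1}$ and $\umin=1$ up to normalising $u$. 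This is exactly the Anti de Sitter solution~\eqref{eq:A}, and in particular $N=M$, so the strict inequality branch was impossible and~\eqref{eq:main_A} holds, with equality only in that case.

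The main obstacle I expect is the maximum-principle step on the noncompact ends: one only controls $\liminf_{x\to\infty}W$, not $W$ itself, and $u$ (hence possibly $X$) is unbounded near infinity, so a naive application of the weak maximum principle on $N$ fails. The cleanest fix is a barrier/Omori–Yau-type argument adapted to the exhaustion: if $W$ had a negative infimum, construct along a minimising sequence $x_i\in M\setminus K_i$ (necessarily divergent, by the boundary data on $\partial N$) a contradiction with the differential inequality $\Delta_\go W\geq\langle X,\D W\rangle$; the hypothesis $\liminf_{x\to\infty}W\geq0$ is precisely what kills the divergent-sequence alternative. Making this rigorous — choosing the right auxiliary function, controlling $X$, and handling the boundary portion $\partial N\subset{\rm MIN}(u)$ where the differential inequality may degenerate because $\D u=0$ — is where the real work lies; the Bochner computation and the final warped-product rigidity are, by contrast, routine once the monotonicity/maximum-principle mechanism of Theorem~\ref{thm:main_D} is in place and transcribed with the sign change $\Lambda\to-\Lambda$.
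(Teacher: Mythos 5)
Your overall architecture (pass to the contrapositive, derive an elliptic inequality for $W=u^2-\umin^2-|\D u|^2$ from Bochner and the static equations, apply a maximum principle on an exhaustion, then run a rigidity computation) matches the paper's intent, and the maximum-principle step you worry about is actually the routine part: since $1/u\leq 1/\umin$ the coefficient of the first-order term is bounded, so the exhaustion argument gives $W\geq 0$ on $N$ with no Omori--Yau machinery needed (this is Lemma~\ref{le:shen_aux_A}). The genuine gap is the step you dispatch in one clause: ``one then upgrades $W\geq 0$ on $N$ to $W\equiv 0$ on $N$.'' This upgrade is exactly the hard point and it does not follow from analyticity or from the strong maximum principle. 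The strong maximum principle requires the subsolution $-W$ to attain its maximum $0$ at an \emph{interior} point of $N$; here $W$ vanishes only on $\pa N\subset{\rm MIN}(u)$, which is the topological boundary of $N$ and is in general a non-smooth, possibly lower-dimensional analytic set. A Hopf boundary-point argument is also unavailable without regularity of $\pa N$, and note that $\D W=0$ automatically on ${\rm MIN}(u)$ (since $\D u=0$ there), so there is no normal-derivative information to exploit. In the global Theorem~\ref{thm:shen_A_dritto} this problem does not arise because ${\rm MIN}(u)$ sits in the interior of $M$; in the local statement it is the whole difficulty.

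The paper closes this gap with a mechanism that is entirely absent from your proposal: the monotonicity formula. One shows that $U(t)=(t^2-\umin^2)^{-n/2}\int_{\{u=t\}\cap N}|\D u|\,\rmd\sigma$ is nondecreasing (Proposition~\ref{pro:main_A}, by integrating ${\rm div}\big[\D u/(u^2-\umin^2)^{n/2}\big]\geq 0$, where the sign comes from the gradient bound $|\D u|^2\leq u^2-\umin^2$ you already have), and separately that $U(t)\to+\infty$ as $t\to\umin^+$ whenever $\mathscr{H}^{n-1}\big({\rm MIN}(u)\cap\overline N\big)>0$ (Proposition~\ref{pro:estimate_A}, via the {\L}ojasiewicz inequality and a local area estimate for the level sets). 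Since monotonicity bounds $\lim_{t\to\umin^+}U(t)$ by a finite value $U(t_0)$, one concludes $\mathscr{H}^{n-1}\big({\rm MIN}(u)\cap\overline N\big)=0$, hence ${\rm MIN}(u)$ cannot disconnect $M$, hence $N$ is the \emph{only} component of $M\setminus{\rm MIN}(u)$ and the global Theorem~\ref{thm:shen_A} applies, where the strong maximum principle legitimately fires at an interior point of ${\rm MIN}(u)$. Without this (or an equivalent argument ruling out a separating piece of ${\rm MIN}(u)$), your proof does not establish the local statement. As a secondary remark, your final rigidity step via a warped-product normal form is more laborious than needed: once $W\equiv 0$ one gets $|\DD u|^2=(\De u)^2/n$ from Shen's identity, hence $\DD u=u\go$, and the conclusion follows from the known classification of such Obata-type equations (the paper cites~\cite[Lemma~3.3]{Qing}).
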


\noindent Theorem~\ref{thm:main_A} is a stronger version of Theorem~\ref{thm:shen_A}, in the sense that the asymptotic behavior of the quantity in~\eqref{eq:main_A} has to be checked only along the ends of $N$. In this sense, the relation between Theorem~\ref{thm:shen_A} and  Theorem~\ref{thm:main_A} is the same as the one between Theorem~\ref{thm:shen_D} and Theorem~\ref{thm:main_D}. In the next subsection, we are going to compare Theorem~\ref{thm:main_A} with other known characterisations of the Anti de Sitter solution.

\subsection{Comparison with other known characterizations.}
\label{sub:comp_with_other_charact}

Classically, the study of static solutions with $\Lambda<0$ has been tackled by requiring some additional information on the asymptotic behavior of the triple $(M,\go,u)$. These assumptions, albeit natural, are usually very strong, in the sense that they restrict the topology of the ends as well as the asymptotic behavior of the function $u$.
The main definitions and known results are discussed in details in Subsection~\ref{sub:defi_app} of the Appendix. Here, we quickly recall them in order to draw the state of the art and put our results in perspective.

The most widely used assumption is to ask for the triple $(M,\go,u)$ to be {\em conformally compact} in the sense of Definition~\ref{def:CC_A} in Appendix~\ref{sec:appB}. This hypotesis forces $(M,\go)$ to be isometric to the interior of a compact manifold $\overline{M}_\infty=M\cup\pa_\infty M$, where $\pa_\infty M$ is the boundary of $\overline{M}_\infty$ and is called the {\em conformal infinity} of $M$. It also requires the metric $\bar g=u^{-2}\go$ to extend to the conformal infinity with some regularity. Despite this being a somewhat standard assumption, almost nothing being known without requiring it, it still imposes some strong topological and analytical {\em a priori} restrictions on a mere solution to~\eqref{eq:pb_A}.
For instance, if $n=3$, we know from~\cite{Chr_Sim} (see also Proposition~\ref{pro:CC_A}) that the conformal infinity $\pa_\infty M$ is necessarily connected, that is, $M$ has a unique end. Therefore, for $3$-dimensional {\em conformally compact} triples, Theorems~\ref{thm:shen_A} and~\ref{thm:main_A} are completely equivalent.
Nevertheless, the conformal compactness {\em per se} is not strong enough to characterize the Anti de Sitter solution. In fact, it is proven in~\cite[Theorem~1.1]{And_Chr_Del-I} that, for any Riemannian metric $\gamma$ on $\Sph^2$ with the property that the Lorentzian manifold $(\R\times\Sph^2,-dt\otimes dt+\gamma)$ has positive scalar curvature, there exists a conformally compact $3$-dimensional solution $(M,\go,u)$ of~\eqref{eq:pb_A} such that the metric induced by $\bar g=u^{-2}\go$ on the conformal infinity coincides with $\gamma$. Such a general result is not available in higher dimensions, however the existence of an infinite family of solutions to~\eqref{eq:pb_A} for any $n\geq 4$ has been proven in~\cite[Theorem~1.1]{And_Chr_Del-II}, showing for instance that any small perturbation of the standard metric on $\Sph^{n-1}$ can be realised as the metric induced on the conformal infinity of a conformally compact static solution through the usual formula.

This implies that, even in the case of conformally compact solutions, one needs to make some additional assumptions in order to prove a rigidity statement. To introduce our next result, we recall that, for conformally compact solutions, the quantity $u^2-\umin^2-|\D u|^2$ extends smoothly to a function on the whole $\overline{M}_\infty=M\cup\pa_\infty M$ and it holds (see formula~\eqref{eq:ALAdS_A} in Appendix~\ref{sec:appB})
\begin{equation}
\label{eq:ALAdS_revisited_A}
u^2-\umin^2-|\D u|^2\,=\,\frac{\RRR^{\pa_\infty M}}{(n-1)(n-2)}-\umin^2\qquad\hbox{on }\,\pa_\infty M\,,
\end{equation}
where $\RRR^{\pa_\infty M}$ is the scalar curvature of the metric induced by $\bar g$ on $\pa_\infty M$.

In order to introduce the next result, we first fix a couple of notations. Given a connected component $N$ of $M\setminus{\rm MAX}(u)$, we denote by $\pa_\infty N$ the conformal infinity of $N$, that is, 
$$\pa_\infty N\,=\,\pa_\infty M\cap \overline{N}^{\overline{M}_\infty}\,,
$$
where $\overline{N}^{\overline{M}_\infty}$ is the closure of $N$ in $\overline{M}_\infty$.
From formula~\eqref{eq:ALAdS_revisited_A} and Theorem~\ref{thm:main_A} we deduce the following corollary, that represents the precise analogue of Corollary~\ref{cor:BGH_rev}, proven in the case $\Lambda>0$.

\begin{corollary}
\label{cor:consequences_ALAdS}
Let $(M,\go,u)$ be a conformally compact solution to problem~\eqref{eq:pb_A} and let $N$ be a connected component of $M\setminus{\rm MIN}(u)$. Suppose that the scalar curvature $\RRR^{\pa_\infty N}$ of the metric induced by $\bar g=u^{-2}\go$ on the conformal infinity 
of $N$ satisfies the following inequality
\begin{equation}
\label{eq:consequences_ALAdS}
\RRR^{\pa_\infty N}\,\,\geq\,\, (n-1)(n-2)\,\umin^2
\end{equation}
on the whole $\pa_\infty N$. Then, up to a normalization of $u$, the triple $(M,\go,u)$ is isometric to the Anti de Sitter solution~\eqref{eq:A}.
\end{corollary}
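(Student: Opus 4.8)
The plan is to rephrase the curvature hypothesis~\eqref{eq:consequences_ALAdS} as a sign condition, at the conformal infinity of $N$, on the quantity $w:=u^2-\umin^2-|\D u|^2$, to upgrade this to a lower bound for the $\liminf$ that enters Theorem~\ref{thm:main_A}, and then to read off the conclusion from the rigidity case of that theorem. No new estimate is needed: the whole content is already packaged in Theorem~\ref{thm:main_A} and in the boundary identity~\eqref{eq:ALAdS_revisited_A}.

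First I would use that, for a conformally compact solution, $w$ extends smoothly to $\overline{M}_\infty=M\cup\pa_\infty M$, with $w$ given on $\pa_\infty M$ by~\eqref{eq:ALAdS_revisited_A}. Restricting this identity to $\pa_\infty N=\pa_\infty M\cap\overline{N}^{\overline{M}_\infty}$ and inserting~\eqref{eq:consequences_ALAdS}, one obtains
$$
w\,=\,\frac{\RRR^{\pa_\infty N}}{(n-1)(n-2)}-\umin^2\,\geq\,0\qquad\hbox{on }\,\pa_\infty N\,.
$$

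Next I would deduce that $\liminf_{x\in N,\,x\to\infty}w(x)\geq 0$: by conformal compactness any sequence $x_i\in N$ with $x_i\to\infty$ subconverges, in the compact space $\overline{M}_\infty$, to a point of $\pa_\infty N$, so continuity of the smooth extension of $w$ together with the previous step gives $\liminf_i w(x_i)\geq\min_{\pa_\infty N}w\geq 0$; as this holds for every such sequence, the asserted inequality follows. Combining it with inequality~\eqref{eq:main_A} of Theorem~\ref{thm:main_A} forces equality in~\eqref{eq:main_A}, and the rigidity statement of Theorem~\ref{thm:main_A} then yields that $(M,\go,u)$ is isometric to the Anti de Sitter solution~\eqref{eq:A}, up to a normalization of $u$, which is the claim.

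The argument is essentially formal, and the only delicate point --- hence the (mild) main obstacle --- is the passage from the pointwise bound on the compact set $\pa_\infty N$ to the bound on the $\liminf$ along the ends of $N$. This relies on two facts built into the notion of conformal compactness (Definition~\ref{def:CC_A}): that $w$ extends continuously up to $\pa_\infty M$, and that the ends of $N$ are faithfully encoded by the compact hypersurface $\pa_\infty N$ of the compactification, so that a sequence tending to infinity in $N$ is the same as a sequence approaching $\pa_\infty N$. Once these are in place, no monotonicity argument beyond the one already used inside Theorem~\ref{thm:main_A} is required.
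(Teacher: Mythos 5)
Your argument is correct and is exactly the paper's intended derivation: the paper itself presents the corollary as an immediate consequence of the boundary identity~\eqref{eq:ALAdS_revisited_A} (which turns the curvature hypothesis into nonnegativity of $u^2-\umin^2-|\D u|^2$ on $\pa_\infty N$, hence into the lower bound on the $\liminf$) combined with the rigidity case of Theorem~\ref{thm:main_A}. Nothing is missing.
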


\noindent Imposing stronger assumptions on the asymptotics of the triple $(M,\go,u)$ leads to 
even cleaner statements. The fee for this is that the class of solutions where the uniqueness can be proven is {\em a priori} much smaller than the ones considered above. For example, if one requires the triple $(M,\go,u)$ to be {\em asymptotically Anti de Sitter} in the sense of Definition~\ref{def:AAdS}, then it is possible to conclude uniqueness as in Corollary~\ref{cor:small_wang_A} below. However, this assumption forces the conformal infinity of $M$ -- endowed with the metric induced on it by $\overline{g}= u^{-2} g$ -- to be connected and isometric to the standard sphere. In particular the quantity $\RRR^{\pa_\infty N}$ in Corollary~\ref{cor:consequences_ALAdS} is equal to $(n-1)(n-2)$ on the whole $\pa_\infty N=\pa_\infty M$.
%
%
%

\begin{corollary}
	\label{cor:small_wang_A}
	Let $(M,\go,u)$ be an asymptotically Anti de Sitter solution to problem~\eqref{eq:pb_A} such that $M$ has empty boundary. If $u_{\rm min}\leq 1$, then, up to a normalization of $u$, the triple $(M,\go,u)$ is isometric to the Anti de Sitter solution~\eqref{eq:A}.
\end{corollary}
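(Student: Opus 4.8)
\textbf{Proof proposal for Corollary~\ref{cor:small_wang_A}.}
The plan is to deduce this corollary directly from Corollary~\ref{cor:consequences_ALAdS} (equivalently, from Theorem~\ref{thm:main_A}). First I would recall that the assumption that $(M,\go,u)$ is asymptotically Anti de Sitter in the sense of Definition~\ref{def:AAdS} implies, as already noted in the discussion preceding the statement, that $(M,\go,u)$ is in particular conformally compact, that its conformal infinity $\pa_\infty M$ is connected, and that the metric induced on $\pa_\infty M$ by $\overline{g}=u^{-2}\go$ is (up to scale) the standard round metric on $\Sph^{n-1}$. Consequently the scalar curvature $\RRR^{\pa_\infty M}$ of that induced metric is the constant $(n-1)(n-2)$.

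Now let $N$ be any connected component of $M\setminus{\rm MIN}(u)$; by the No Island Lemma~\ref{lem:noisole_A} it contains at least one end of $M$, so $\pa_\infty N=\pa_\infty M\cap\overline{N}^{\,\overline{M}_\infty}$ is non-empty, and on it $\RRR^{\pa_\infty N}=(n-1)(n-2)$. Therefore the hypothesis
\[
\RRR^{\pa_\infty N}\,\,\geq\,\,(n-1)(n-2)\,\umin^2
\]
of Corollary~\ref{cor:consequences_ALAdS} holds as soon as $(n-1)(n-2)\geq (n-1)(n-2)\,\umin^2$, i.e.\ precisely when $\umin\leq 1$ (recall $n\geq 3$, so $(n-1)(n-2)>0$). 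This is exactly the standing assumption of the corollary. Applying Corollary~\ref{cor:consequences_ALAdS} then yields that, up to a normalization of $u$, the triple $(M,\go,u)$ is isometric to the Anti de Sitter solution~\eqref{eq:A}, which is the desired conclusion.

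In this route there is essentially no obstacle: the corollary is a clean specialization, the only genuine input being the already-catalogued fact (see Subsection~\ref{sub:defi_app} of the Appendix) that ``asymptotically Anti de Sitter'' forces the conformal infinity to be the round sphere, together with the sign bookkeeping $\umin\le 1\iff\umin^2\le 1$. If one instead wished to give a self-contained argument not citing Corollary~\ref{cor:consequences_ALAdS}, the work would be pushed back onto verifying relation~\eqref{eq:ALAdS_revisited_A}, namely that $u^2-\umin^2-|\D u|^2$ extends to $\RRR^{\pa_\infty M}/[(n-1)(n-2)]-\umin^2=1-\umin^2\geq 0$ on $\pa_\infty M$, whence $\liminf_{x\in N,\,x\to\infty}(u^2-\umin^2-|\D u|^2)(x)=1-\umin^2\geq 0$; the equality-characterization half of Theorem~\ref{thm:main_A} (whose proof via Maximum Principle and Monotonicity Formula is the real content elsewhere in the paper) would then close the argument only in the borderline case $\umin=1$, and for $\umin<1$ one would need the strict version stated as Theorem~\ref{thm:shen_A_dritto}. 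The clean path is to invoke Corollary~\ref{cor:consequences_ALAdS} and be done.
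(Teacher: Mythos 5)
Your proposal is correct and follows essentially the same route as the paper: the text immediately preceding the corollary points out that the asymptotically Anti de Sitter assumption forces $\RRR^{\pa_\infty N}=(n-1)(n-2)$ on $\pa_\infty N=\pa_\infty M$, so that Corollary~\ref{cor:consequences_ALAdS} (equivalently, the limit relation $\lim_{u\to+\infty}(u^2-1-|\D u|^2)=0$ from the Appendix combined with Theorem~\ref{thm:main_A}) applies precisely when $\umin\leq 1$. Your bookkeeping $\umin\le 1\iff (n-1)(n-2)\ge(n-1)(n-2)\umin^2$ and the use of Lemma~\ref{lem:noisole_A} to ensure $\pa_\infty N\neq\emptyset$ match the intended argument.
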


\noindent We remark that a stronger version of Corollary~\ref{cor:small_wang_A} is already known. In fact, it has been  proved by Wang in~\cite{Wang_2} that the same thesis holds for spin manifold without the hypotesis $\umin\leq 1$. Later, Qing in~\cite{Qing} has removed the spin assumption (see Theorem~\ref{thm:uniquenessW_A} and the discussion below). It is worth mentioning that the methods employed to obtain these uniqueness results heavily rely on (some kind of) the Positive Mass Theorem. More precisely, Wang's result relies on the Positive Mass Theorem for asymptotically hyperbolic manifolds, proved in~\cite{Wang_1}, whereas Qing's result exploit the Positive Mass Theorem for asymptotically flat manifolds, proved by Schoen-Yau~\cite{Sch_Yau,Sch_Yau_2}. 
%
%

\section{Shen's Identity and its consequences}

\label{sec:maxprinc}

\noindent In this section we give the proofs of Theorems~\ref{thm:shen_D} and~\ref{thm:shen_A}, which consist on the analysis via the Strong Maximum Principle of Shen's Identity~\eqref{eq:div_BGH_AD}.

\subsection{Computations via Bochner formula.}

In order to prove our theorems, we need the following preparatory result, which is a simple application of the Bochner Formula.

\begin{proposition}[{Shen's Identity~\cite[formula~(8)]{Ambrozio},~\cite[formula~(12)]{Shen}}]
\label{pro:div_BGH_AD}
Let $(M,\go,u)$ be a solution of either system~\eqref{eq:pb_D} or system~\eqref{eq:pb_A}. Then it holds
\begin{equation}
\label{eq:div_BGH_AD}
{\rm div}\left[\frac{1}{u}\left(\D|\D u|^2-\frac{2}{n}\De u\D u\right)\right]\,\,=\,\,\frac{2}{u}\,\Big[\,|\DD u|^2-\frac{(\De u)^2}{n}\,\Big]\,\,\geq\,\,0\,.
\end{equation}
\end{proposition}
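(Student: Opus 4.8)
The plan is to derive identity~\eqref{eq:div_BGH_AD} directly from the Bochner formula applied to the static potential $u$, exploiting heavily the two equations in~\eqref{eq:pb_D} (or~\eqref{eq:pb_A}), which allow one to trade second derivatives of $u$ for algebraic expressions involving $u$, $\Ric$, and the metric. Recall that the Bochner formula reads
\begin{equation*}
\frac{1}{2}\De |\D u|^2 \,=\, |\DD u|^2 + \langle \D u, \D \De u\rangle + \Ric(\D u, \D u)\,.
\end{equation*}
First I would handle the middle term: since $\De u = \mp n u$ (the sign depending on whether we are in the $\Lambda>0$ or $\Lambda<0$ case), we get $\langle \D u, \D\De u\rangle = \mp n |\D u|^2$. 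For the curvature term, I would contract the first equation in~\eqref{eq:pb_D}, namely $u\,\Ric = \DD u \pm n u\,\go$, against $\D u\otimes\D u$, obtaining $u\,\Ric(\D u,\D u) = \DD u(\D u,\D u) \pm n u |\D u|^2 = \tfrac12\langle \D |\D u|^2, \D u\rangle \pm n u|\D u|^2$. Substituting both into Bochner gives a first-order relation among $\De|\D u|^2$, $|\DD u|^2$, $|\D u|^2$, and $\langle \D|\D u|^2,\D u\rangle/u$.

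Next I would compute the divergence on the left-hand side of~\eqref{eq:div_BGH_AD} by expanding it with the product rule. Writing $X = \tfrac1u\big(\D|\D u|^2 - \tfrac2n \De u\,\D u\big)$, one has
\begin{equation*}
{\rm div}\,X \,=\, \frac1u\Big(\De|\D u|^2 - \frac2n \De u\,\De u - \frac2n \langle \D\De u, \D u\rangle\Big) - \frac1{u^2}\Big\langle \D u,\ \D|\D u|^2 - \frac2n\De u\,\D u\Big\rangle\,.
\end{equation*}
Now I would substitute $\De u = \mp n u$ and $\D\De u = \mp n\,\D u$ throughout, so that the terms $\tfrac2n\De u\,\De u = 2n u^2 \mp (\text{sign bookkeeping})$ and $\tfrac2n\langle\D\De u,\D u\rangle = \mp 2 |\D u|^2$ become explicit, and likewise $\tfrac2n\De u\,\langle\D u,\D u\rangle/u^2 = \mp 2|\D u|^2/u$. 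The key cancellation is that the uncontrolled terms $\De|\D u|^2$ and $\langle\D u,\D|\D u|^2\rangle/u$ appearing here are precisely the ones expressed via the Bochner relation from the previous step; plugging that in, the second-order and first-order pieces should collapse, leaving only $\tfrac2u\big(|\DD u|^2 - \tfrac{(\De u)^2}{n}\big)$.

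The main obstacle is purely bookkeeping: keeping the signs straight so that the computation works uniformly for both~\eqref{eq:pb_D} and~\eqref{eq:pb_A}, and making sure the $n u|\D u|^2$-type terms coming from the traceless-Hessian substitution cancel against those coming from differentiating $\De u$. A clean way to organize this is to introduce the traceless Hessian $\mathring{\DD u} = \DD u - \tfrac{\De u}{n}\go$; then $|\DD u|^2 - \tfrac{(\De u)^2}{n} = |\mathring{\DD u}|^2 \geq 0$, which makes the nonnegativity in~\eqref{eq:div_BGH_AD} manifest and simultaneously explains why the quantity $\D|\D u|^2 - \tfrac2n\De u\,\D u$ is the natural object to take the divergence of — it is exactly $2\,\mathring{\DD u}(\D u,\cdot) + (\text{trace part})$ up to terms that vanish under ${\rm div}(\tfrac1u\,\cdot)$ by the two static equations. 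The final inequality is then immediate since $u>0$ in the interior of $M$.
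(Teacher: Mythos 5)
Your proposal is correct and follows essentially the same route as the paper: the Bochner formula combined with the two static equations to express $\De|\D u|^2$ in terms of $|\DD u|^2$ and $\tfrac1u\langle\D|\D u|^2,\D u\rangle$, followed by a direct expansion of the divergence with the weight $1/u$, under which the first-order terms cancel. The only cosmetic difference is that the paper arrives at the weight $1/u$ by computing ${\rm div}(\alpha Y)$ for a general $\alpha(u)$ and then choosing $\alpha=1/u$, whereas you expand the product rule for that specific weight directly.
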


\begin{proof}
Since the two cases are very similar, we will do the computations for both solutions of~\eqref{eq:pb_D} and~\eqref{eq:pb_A} at the same time.
We first recall that, from the first and second equation in~\eqref{eq:pb_D} and~\eqref{eq:pb_A}, we have $\De u=\mp nu$, and $\Ric=\DD u\pm nu\go$. 
Using these equalities together with the Bochner Formula, we compute
\begin{align}
\notag
\De|\D u|^2\,&=\,2\,|\DD u|^2\,+\,2\,\Ric(\D u,\D u)\,+\,2\langle\D\De u\,|\,\D u\rangle
\\
\notag
&=\,2\,|\DD u|^2\,+\,2\,\Big[\,\frac{1}{u}\,\DD u(\D u,\D u)\,\pm\,n\,|\D u|^2\,\Big]\,\mp\,2n\,|\D u|^2
\\
\label{eq:boch_BGH_D}
&=\,2\,|\DD u|^2\,+\,\frac{1}{u}\,\langle\D|\D u|^2\,|\,\D u\rangle\,.
\end{align}
Letting
$$
Y\,=\,\D|\D u|^2\,-\,\frac{2}{n}\,\De u\,\D u\,,
$$
and using~\eqref{eq:boch_BGH_D}, we compute
\begin{align*}
{\rm div}(Y)
\,&=\, 
\De|\D u|^2\,-\,\frac{2}{n}\langle\D\De u\,|\,\D u\rangle\,-\,\frac{2}{n}(\De u)^2
\\
&=\,2\,\Big[\,|\DD u|^2-\frac{(\De u)^2}{n}\,\Big]\,+\,\frac{1}{u}\,\langle\D|\D u|^2\,|\,\D u\rangle\,\pm\,2\,|\D u|^2\,.
\end{align*}
More generally, for every nonzero $\mathscr{C}^1$ function $\alpha=\alpha(u)$, it holds
\begin{align*}
\frac{1}{\alpha}{\rm div}(\alpha Y)\,&=\,{\rm div}(Y)\,+\,\frac{\dot\alpha}{\alpha}\,\langle Y\,|\,\D u\rangle
\\
&=\,2\,\Big[\,|\DD u|^2-\frac{(\De u)^2}{n}\,\Big]\,+\,\Big(\frac{\dot\alpha}{\alpha}+\frac{1}{u}\Big)\Big(\langle\D|\D u|^2\,|\,\D u\rangle\,\pm\,2u\,|\D u|^2\Big)\,.
\end{align*}
where $\dot\alpha$ is the derivative of $\alpha$ with respect to $u$.
The computation above suggests us to choose
$$
\alpha(u)\,=\,\frac{1}{u}\,.
$$
so that $\dot\alpha/\alpha=-1/u$, and we obtain
\begin{equation}
\label{eq:div_BGH_aux_AD}
{\rm div}\Big(\,\frac{1}{u}\,Y\,\Big)\,=\,\frac{2}{u}\,\Big[\,|\DD u|^2-\frac{(\De u)^2}{n}\,\Big].
\end{equation}
The square root in the right hand side of~\eqref{eq:div_BGH_aux_AD} coincides with the $\go$-norm of the trace-free part of $\DD u$, in particular it is always positive, and the thesis follows.
\end{proof}

\noindent Proposition~\ref{pro:div_BGH_AD} is already well known, and it has a number of applications. The most significant one is a proof of the Boucher-Gibbons-Horowitz inequality~\cite{Bou_Gib_Hor}, for which we refer the reader to the following Subsection~\ref{sub:BGH_rev}. Another interesting application of formula~\eqref{eq:div_BGH_AD} has appeared  recently in~\cite[Theorem~B]{Ambrozio}, where it is used to deduce some relevant topological features of 
the solutions to system~\eqref{eq:pb_D}.

\subsection{Proof of Theorems~\ref{thm:shen_D} and~\ref{thm:shen_A}.}

In this subsection, we combine Proposition~\ref{pro:div_BGH_AD} with the Strong Maximum Principle, in order to recover Theorems~\ref{thm:shen_D} and~\ref{thm:shen_A}.
Despite the two proofs present some analogies, we prefer to prove each theorem independently. 
We start with Theorem~\ref{thm:shen_D}, that we rewrite here in an alternative -- but equivalent -- form, for the reader's convenience.

\begin{theorem}
\label{thm:shen_D_dritto}
Let $(M,\go,u)$ be a solution to problem~\eqref{eq:pb_D}, and suppose that 
\begin{equation*}
\frac{|\D u|}{\umax} \,\, \leq \,\, 1  \qquad \hbox{on} \quad \pa M \, .
\end{equation*} 
Then, up to a normalization of $u$, the triple $(M,\go,u)$ is isometric to the de Sitter solution~\eqref{eq:D}. In particular, $\pa M$ is connected.
\end{theorem}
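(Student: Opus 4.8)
The plan is to combine Shen's Identity~\eqref{eq:div_BGH_AD} with the Strong Maximum Principle. The first observation is that, for a solution of~\eqref{eq:pb_D}, one has $\De u=-nu$, so the vector field inside the divergence in~\eqref{eq:div_BGH_AD} is
$$
\frac{1}{u}\Big(\D|\D u|^2-\frac{2}{n}\De u\,\D u\Big)\,=\,\frac{1}{u}\,\D\big(|\D u|^2+u^2\big)\,.
$$
Thus, setting $w=|\D u|^2+u^2$, Proposition~\ref{pro:div_BGH_AD} reads ${\rm div}\big(\tfrac{1}{u}\D w\big)\geq 0$ on $M\setminus\pa M$, with equality at a point if and only if the trace-free part of $\DD u$ vanishes there. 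Multiplying by $u>0$ shows that $w$ satisfies the differential inequality $\De w-\tfrac{1}{u}\langle\D u\,|\,\D w\rangle\geq 0$, that is, $w$ is a subsolution of an elliptic operator whose coefficients are smooth on the connected open manifold $M\setminus\pa M$.

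Next I would locate the maximum of $w$ on the compact manifold $M$. Since $w=|\D u|^2\leq\umax^2$ on $\pa M=\{u=0\}$ by the hypothesis on the surface gravity, while $w=\umax^2$ at every point of the nonempty set ${\rm MAX}(u)$ — which lies in the interior $M\setminus\pa M$, as $u\equiv 0<\umax$ on $\pa M$ — the maximum of $w$ over $M$ is attained at an interior point. By the Strong Maximum Principle $w$ is then constant, so $w\equiv\umax^2$ on $M$.

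The last step is the rigidity statement, obtained from the equality case of~\eqref{eq:div_BGH_AD}. Since $w$ is constant, the trace-free part of $\DD u$ vanishes identically, i.e. $\DD u=\tfrac{\De u}{n}\go=-u\,\go$ on $M$; plugging this into the first equation of~\eqref{eq:pb_D} gives $u\,\Ric=(n-1)\,u\,\go$, hence $\Ric=(n-1)\go$ on $M\setminus\pa M$, and so on all of $M$ by continuity. Moreover $w\equiv\umax^2$ means $|\D u|^2=\umax^2-u^2$, and $u$ is non-constant since it vanishes on $\pa M$ and is positive in the interior. Normalizing $\umax=1$ and doubling $(M,\go)$ across the totally geodesic boundary $\pa M=\{u=0\}$, one obtains a closed Einstein manifold with $\Ric=(n-1)\go$ on which $u$ extends to a non-constant function satisfying $\DD u=-u\,\go$; by Obata's theorem this double is isometric to the unit round sphere $(\Sph^n,g_{\Sph^n})$, with $u$ the restriction of a linear coordinate. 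Consequently ${\rm MAX}(u)$ is a single point, $M$ is the closed round hemisphere $\{u\geq 0\}$, its boundary $\pa M=\{u=0\}$ is the connected equator, and realizing the hemisphere as a graph over $\overline{B(0,1)}\subset\R^n$ identifies $(M,\go,u)$ with the de Sitter triple~\eqref{eq:D}.

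The step I expect to require the most care is the maximum-principle argument: the drift coefficient $1/u$ of the operator $\De-\tfrac{1}{u}\langle\D u\,|\,\D\,\cdot\,\rangle$ blows up at $\pa M$, so one must work on the open set $M\setminus\pa M$, where the operator is smooth and $w$ is a genuine subsolution, and exploit the compactness of $M$ together with the bound $w\leq\umax^2$ on $\pa M$ and the value $w=\umax^2$ on ${\rm MAX}(u)\subset M\setminus\pa M$ to guarantee that the maximum is attained at an interior point. The other nontrivial ingredient is the Obata-type identification of the round sphere in the equality case, where the analyticity of $u$ (hence of $\go$, via the Einstein equation) is used to ensure that the doubled metric is smooth.
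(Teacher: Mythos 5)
Your proof is correct and follows the same route as the paper: Shen's identity turns $w=|\D u|^2+u^2$ into a subsolution of the operator $\De-\tfrac{1}{u}\langle\D u\,|\,\D\,\cdot\,\rangle$, the maximum principle forces $w\equiv\umax^2$, and the equality case together with the Obata doubling argument yields the round hemisphere. The only point where you diverge is the treatment of the singular drift $1/u$ at $\pa M$: the paper exhausts $M$ by the regular sublevel sets $M_\ep=\{u\geq\ep\}$ and passes to the limit $\ep\to 0$, whereas you observe directly that $\max_M w$ must be attained at an interior point (since $w\leq\umax^2$ on $\pa M$ while $w=\umax^2$ on ${\rm MAX}(u)\subset M\setminus\pa M$), so the strong maximum principle applies on the open set $M\setminus\pa M$ where the coefficients are locally bounded --- a slightly cleaner variant reaching the same conclusion.
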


\begin{proof}

Combining the equation $\De u=-nu$ with formula~\eqref{eq:div_BGH_AD} in Proposition~\ref{pro:div_BGH_AD}, we get \begin{equation}
\label{eq:div_BGH_rev_D}
0 \, \leq \,
{2}\,\Big[\,|\DD u|^2-\frac{(\De u)^2}{n}\,\Big]
\, = \, \Delta \big(|\D u|^2\,+\,u^2) \, - \, \frac{1}{u} \big\langle \,\D u \, \big| \, \D\big(|\D u|^2\,+\,u^2\big)\,\big\rangle \, .
\end{equation}
We claim that $\big(|\D u|^2\,+\,u^2)$ is constant and its value coincides with $\umax^2$. This follows essentially from the Maximum Principle, however some attention should be payed to the coefficient $1/u$, since it blows up at $\pa M$. Hence, for the sake of completeness, we prefer to present the details.
 
As it is pointed out in Subsection~\ref{sub:prelim},
the function $u$ is analytic, and thus its critical level sets as well as its critical value are discrete.
On the other hand, one has that $|\D u| >0$ on $\pa M$, so that the zero level set of $u$ is regular. Moreover, it is possible to choose a positive number $\eta >0$ such that each level set $\{ u = \ep\}$ is regular (and diffeomorphic to $\pa M$), provided $0 < \ep \leq \eta$. Setting $M_\ep = \{u \geq \ep\}$, it is immediate to observe that the coefficient $1/u$ is now bounded above by $1/\ep$ in $M_\ep$, moreover we have that 
\begin{equation*}
\max_{M_\ep} (|\D u|^2+u^2)\, \, \leq \,\, \max_{\pa M_\ep} (|\D u|^2+u^2) \, ,
\end{equation*}
by the Maximum Principle. In particular, for every $0 < \ep \leq \eta$ it holds
\begin{equation*}
 \max_{\pa M_\eta} (|\D u|^2+u^2)\, \, \leq \,\, \max_{\pa M_\ep} (|\D u|^2+u^2)\, .
\end{equation*}
Moreover, it is easily seen that $\lim_{\ep \to 0^+} \max_{\pa M_\ep}\, (|\D u|^2+u^2) = |\D u|_{|\pa M}$, so that, using the assumption $|\D u| \leq \umax$ on $\pa M$, one gets
\begin{equation*}
 \max_{\pa M_\eta} (|\D u|^2+u^2) \, \leq \, \umax^2\, .
\end{equation*}
On the other hand, it is clear that ${\rm MAX}(u)=\{p\in M \, : \, u(p)=\umax\} \subset M_\eta$ and that for every $p \in {\rm MAX}(u)$ it holds
$(|\D u|^2+u^2) (p) \, = \, \umax^2$.
The Strong Maximum Principle implies that $|\D u|^2\,+\,u^2\equiv \umax^2$ on $M_\eta$. Since $\eta>0$ can be chosen arbitrarily small, we conclude that $|\D u|^2+u^2\equiv \umax^2$ on $M$.

Plugging the latter identity in formula~\eqref{eq:div_BGH_rev_D}, we easily obtain $|\DD u|^2=(\De u)^2/n$, from which it follows $\DD u=-ug$ and in turns that $\Ric=(n-1)g$, where in the last step we have used the first equation of system~\eqref{eq:pb_D}. Now we can conclude by exploiting the results in~\cite{Obata}. To this end, we double the manifold along the totally geodesic boundary, obtaining a closed compact Einstein manifold $(\hat M,\hat g)$ with $\Ric_{\hat g}=(n-1)\hat g$. On $\hat M$ we define the function $\hat u$ as $\hat u=u$ on one copy of $M$ and as $\hat u=-u$ on the other copy. Since $\DD u=0$ on $\pa M$, after the gluing the function $\hat u$ is easily seen to be $\mathscr{C}^2$ on $\hat M$. Moreover, $\hat u$ is an eigenvalue of the laplacian, and more precisely it holds $-\Delta_{\hat\go} \hat u=n\hat u$. Therefore~\cite[Theorem~2]{Obata} applies and we conclude that $(\hat M,\hat g)$ is isometric to a standard sphere.
%
\end{proof}

\noindent We pass now to the proof of Theorem~\ref{thm:shen_A}, that we restate here in an alternative form. Albeit its strict analogy with the above argument, we will present the proof of Theorem~\ref{thm:shen_A} in full details, since this will give us the opportunity to show how the required adjustments are essentially related to the different topology of the manifold $M$.
%

\begin{theorem}
\label{thm:shen_A_dritto}
Let $(M,\go,u)$ be a solution to problem~\eqref{eq:pb_A}, and suppose
$$
\liminf_{x\to\infty}\left(u^2-\umin^2-|\D u|^2\right)(x)\,\,\geq\,\, 0\,.
$$
Then, up to a normalization of $u$, the triple $(M,\go,u)$ is isometric to the Anti de Sitter solution~\eqref{eq:A}. In particular, $M$ has a unique end.
\end{theorem}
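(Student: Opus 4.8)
The plan is to mirror the argument of Theorem~\ref{thm:shen_D_dritto} step by step, using Shen's Identity~\eqref{eq:div_BGH_AD} together with the Strong Maximum Principle, but paying attention to the fact that now $M$ is non-compact with empty boundary and $u \to +\infty$ at infinity. First I would combine the equation $\De u = nu$ from~\eqref{eq:pb_A} with~\eqref{eq:div_BGH_AD}, obtaining the analogue of~\eqref{eq:div_BGH_rev_D}, namely
\[
0 \,\leq\, 2\Big[\,|\DD u|^2 - \frac{(\De u)^2}{n}\,\Big] \,=\, \Delta\big(|\D u|^2 - u^2\big) \,-\, \frac{1}{u}\big\langle\,\D u\,\big|\,\D\big(|\D u|^2 - u^2\big)\,\big\rangle\,,
\]
so that $w := |\D u|^2 - u^2$ is a subsolution of the operator $L := \Delta - \tfrac{1}{u}\langle\D u\,|\,\D\,\cdot\,\rangle$. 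Note that here the potential $1/u$ is bounded on all of $M$ (since $u \geq \umin > 0$ and $u$ has no zeros), which actually makes the maximum principle cleaner than in the $\Lambda>0$ case.

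Next I would claim that $w \equiv -\umin^2$, i.e.\ that $|\D u|^2 - u^2 + \umin^2 \equiv 0$. The hypothesis $\liminf_{x\to\infty} w(x) \geq -\umin^2$ gives a lower bound on $w$ along the ends. Since $M\setminus{\rm MIN}(u)$ may a priori have several connected components, each containing at least one end by the No Island Lemma~\ref{lem:noisole_A}, I would work on the compact sets $M_\ep := \{u \leq \ep\}$... wait — that set is compact since $u\to\infty$ at infinity, precisely the opposite of the $\Lambda>0$ situation. On $M\setminus\{\,u < \umin+\delta\,\}$ one has a compact exhaustion; applying the maximum principle for $L$ on large compact pieces and using the asymptotic hypothesis to control $w$ near infinity shows $\sup_M w \leq -\umin^2$. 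On the other hand, at any point $p\in{\rm MIN}(u)$ one has $\D u(p)=0$, hence $w(p) = -\umin^2$, so the supremum is attained in the interior. The Strong Maximum Principle for $L$ (whose coefficients are smooth and bounded on the relevant region) then forces $w \equiv -\umin^2$ on $M$.

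Plugging $w \equiv -\umin^2$ back into the displayed identity yields $|\DD u|^2 = (\De u)^2/n$, hence $\DD u = -u\go$ (the traceless part of $\DD u$ vanishes and the trace is $\De u = nu$, wait — sign: $\DD u = \tfrac{\De u}{n}\go = u\go$), and then the first equation of~\eqref{eq:pb_A} gives $\Ric = \DD u - n u\go = -(n-1)\go$, so $(M,\go)$ is Einstein with negative curvature. Finally I would invoke an Obata-type rigidity theorem for the equation $\DD u = u\go$ on a complete Einstein manifold with $\Ric = -(n-1)\go$ (cf.~\cite{Obata} and its hyperbolic analogues, e.g.~Kanai, Tashiro): the existence of a non-constant function $u$ with $\DD u = u\go$ forces $(M,\go)$ to be isometric to hyperbolic space $\HH^n$, and then matching the normalization recovers the Anti de Sitter solution~\eqref{eq:A}; in particular $M$ has a single end. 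The main obstacle I anticipate is the maximum-principle step: because $M$ is non-compact one cannot simply invoke the compact maximum principle, and one must carefully feed the $\liminf$ hypothesis through an exhaustion argument on each connected component of $M\setminus{\rm MIN}(u)$ to rule out the possibility that $w$ attains its supremum only "at infinity"; the No Island Lemma~\ref{lem:noisole_A} and the boundedness of the coefficient $1/u$ are what make this work.
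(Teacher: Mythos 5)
Your proposal is correct and follows essentially the same route as the paper: Shen's identity, the weak maximum principle on a compact exhaustion combined with the asymptotic hypothesis, the strong maximum principle at the interior minimum set to force $|\D u|^2-u^2+\umin^2\equiv 0$, and finally rigidity from $\DD u = u\,\go$ (the paper invokes Qing's Lemma~3.3 where you cite the Obata/Tashiro/Kanai circle of results, which amounts to the same thing since $u$ has a critical point at its minimum). The only slip is your parenthetical that the hypothesis gives a \emph{lower} bound on $w=|\D u|^2-u^2$ at infinity --- it actually gives an \emph{upper} bound, $\limsup_{x\to\infty} w \leq -\umin^2$ --- but your subsequent use of it ($\sup_M w \leq -\umin^2$, attained on ${\rm MIN}(u)$) is the correct one.
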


\begin{proof}
Recalling $\De u=nu$ and formula~\eqref{eq:div_BGH_AD} in Proposition~\ref{pro:div_BGH_AD}, we obtain
\begin{equation}
\label{eq:elliptic_A}
0\,\leq\,2\Big[\,|\DD u|^2-\frac{(\De u)^2}{n}\,\Big]\,=\,\De\big(|\D u|^2\,-\,u^2\big)\,-\,\frac{1}{u}\big\langle\,\D u\,\big|\,\D\big(|\D u|^2\,-\,u^2\big)\,\big\rangle\,.
\end{equation}
We want to proceed in the same spirit as in the proof of Theorem~\ref{thm:shen_D_dritto} above. In this case the boundary is empty and the quantity $1/u$ is bounded from above by $1/\umin$ on the whole $M$. On the other hand, this time the manifold $M$ is complete and noncompact, so we have to pay some attention to the behavior of our solution along the ends.
Let then $\{K_i\}_{i\in\N}$ be an exhaustion by compact sets of $M$. Without loss of generality, we can assume that the exhaustion is ordered by inclusion, namely $K_i\subset K_j$, whenever $i<j$. Applying the Weak Maxiimum Principle to the differential inequality~\eqref{eq:elliptic_A} one gets 
\begin{equation}
\label{eq:maxpr_aux_A}
\left(|\D u|^2-u^2+\umin^2 \right) (x) \,\, \leq \,\, \max_{\pa K_{i}}\left(|\D u|^2-u^2+\umin^2\right) \, ,
\end{equation}
for every $x \in K_i$ and every $i \in \mathbb{N}$. On the other hand, the assumption is clearly equivalent to $\limsup_{x \to  \infty}(|\D u|^2-u^2+\umin^2)(x)\leq 0$. This implies that, for any given $\ep>0$, there exists a large enough $j_\ep\in\N$ so that 
\begin{equation}
\max_{\pa K_j}\left(|\D u|^2-u^2+\umin^2\right)
 \, \leq \, \ep\,,\qquad \hbox{for every $j\geq j_\ep$}\,. 
\end{equation}
%
%
Combining the last two inequalities, we easily conclude that
$$
|\D u|^2-u^2+\umin^2\,\,\leq\,\,0\,,
$$
on the whole $M$. In particular, as soon as a compact subset $K$ of $M$ contains ${\rm MIN}(u)\,=\,\{p\in M\,:\,u(p)=\umin\}$ in its interior, we have that $\max_{\pa K}(|\D u|^2-u^2+\umin^2)\leq 0$. Since on ${\rm MIN}(u)$ it clearly holds $|\D u|^2-u^2+\umin^2=0$, the Strong Maximum Principle implies that $|\D u|^2-u^2+\umin^2\equiv 0$ on $K$. From the analyticity of $u$, it follows that $|\D u|^2-u^2+\umin^2\equiv 0$ on the whole $M$.  Plugging this information in~\eqref{eq:elliptic_A}, we easily obtain $|\DD u|^2=(\De u)^2/n$, from which we deduce $\DD u=u\go$ and we can conclude using~\cite[Lemma~3.3]{Qing}.
\end{proof}

\subsection{Boucher-Gibbons-Horowitz method revisited.}
\label{sub:BGH_rev}

In this subsection we illustrate another consequence of Proposition~\ref{pro:div_BGH_AD}, namely, we prove a local version of the well known Boucher-Gibbons-Horowitz inequality. To do that we are going to retrace the approach used in~\cite[Section~6]{Chrusciel_2}, which essentially consists in integrating identity~\eqref{eq:div_BGH_AD} on $M$ and using the Divergence Theorem. The main difference is that instead of working on the whole $M$, we will focus on a single connected component $N$ of $M\setminus{\rm MAX}(u)$. This will lead us to the proof of Theorem~\ref{thm:BGH_rev}, which we have restated here for the reader's convenience.

\begin{theorem}
Let $(M,\go,u)$ be a solution to problem~\eqref{eq:pb_D}, let $N$ be a 
connected component of $M\setminus{\rm MAX}(u)$, and let $\pa N = \pa M \cap N$ be the non-empty and possibly disconnected boundary portion of $\pa M$ that lies in $N$. Suppose also that $|\D u|^2\leq C(\umax-u)$ on the whole $N$, for some constant $C\in\R$. Then it holds
$$
0 \,\,\, \leq \, \int_{\pa N}\!\!|\D u|\left[\RRR^{\pa N}-(n-1)(n-2)\right]\rmd\sigma\,,
$$
where $\RRR^{\pa N}$ is the scalar curvature of the restriction of the metric $\go$ to $\pa N$. Moreover, if the equality holds then, up to a normalization of $u$, $(M,\go,u)$ is isometric to the de Sitter solution~\eqref{eq:S}.
\end{theorem}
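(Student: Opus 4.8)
The plan is to integrate Shen's Identity~\eqref{eq:div_BGH_AD} over $N$ and, via the Divergence Theorem, to turn it into the boundary integral~\eqref{eq:BGH_rev}, following the scheme of~\cite[Section~6]{Chrusciel_2}. First I specialize Proposition~\ref{pro:div_BGH_AD} to the present problem: plugging $\De u=-nu$ into~\eqref{eq:div_BGH_AD}, the vector field there becomes $\tfrac1u\,\D\big(|\D u|^2+u^2\big)$, while, since the trace-free Hessian $\DD u-\tfrac{\De u}{n}\go$ equals $u\big(\Ric-(n-1)\go\big)$ by the first equation of~\eqref{eq:pb_D}, the right hand side of~\eqref{eq:div_BGH_AD} equals $2u\,\big|\Ric-(n-1)\go\big|^2$, a bounded nonnegative function on the compact $M$ vanishing along $\pa M$. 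Hence~\eqref{eq:div_BGH_AD} reads
\begin{equation*}
{\rm div}\Big[\tfrac1u\,\D\big(|\D u|^2+u^2\big)\Big]\,=\,2u\,\big|\Ric-(n-1)\go\big|^2\,\geq\,0\qquad\hbox{in}\ \ M\setminus\pa M\,.
\end{equation*}

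Since $1/u$ blows up along $\pa M$ and the level set $\{u=\umax\}\supseteq{\rm MAX}(u)$ need not be regular, I do not integrate over $N$ directly, but over the truncated region $N_{\ep,\delta}=\{x\in N:\ \ep\le u(x)\le\umax-\delta\}$, where $\ep>0$ is small and $\umax-\delta$ is a regular value of $u$ close to $\umax$; both choices are legitimate because, $u$ being analytic, the critical values of $u$ are discrete. For such $\ep$ and $\delta$ the domain $N_{\ep,\delta}$ is a compact manifold whose boundary consists of the regular hypersurfaces $\{u=\ep\}\cap\overline N$ (diffeomorphic to $\pa N$) and $\{u=\umax-\delta\}\cap\overline N$. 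Writing $\psi=|\D u|^2+u^2$ and $\nu=\D u/|\D u|$, the Divergence Theorem yields
\begin{equation*}
\int_{N_{\ep,\delta}}\!\!2u\,\big|\Ric-(n-1)\go\big|^2\,\rmd\mu\,=\,\int_{\{u=\umax-\delta\}\cap\overline N}\!\!\tfrac1u\,\langle\,\D\psi\,|\,\nu\,\rangle\,\rmd\sigma\ -\ \int_{\{u=\ep\}\cap\overline N}\!\!\tfrac1u\,\langle\,\D\psi\,|\,\nu\,\rangle\,\rmd\sigma\,,
\end{equation*}
the minus sign in front of the last integral being due to the fact that the outward unit normal of $N_{\ep,\delta}$ along $\{u=\ep\}$ points towards smaller values of $u$.

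The core of the argument is the evaluation of the two fluxes. On every regular level set of $u$, the identity $\DD u=u\Ric-nu\go$ gives at once the pointwise formula $\tfrac1u\,\langle\,\D\psi\,|\,\nu\,\rangle=2|\D u|\big[\Ric(\nu,\nu)-(n-1)\big]$. For the flux through $\{u=\umax-\delta\}$, this implies $\big|\tfrac1u\langle\,\D\psi\,|\,\nu\,\rangle\big|\le 2|\D u|\,\sup_M\big|\Ric-(n-1)\go\big|$; combining it with the hypothesis $|\D u|^2\le C(\umax-u)$ and with the coarea formula --- which forces $\int_{\umax-\delta}^{\umax}\!\big(\int_{\{u=s\}}|\D u|\,\rmd\sigma\big)ds=\int_{\{\umax-\delta<u\}}|\D u|^2\,\rmd\mu\to0$ as $\delta\to0$ --- I can select regular values $\umax-\delta_k\to\umax$ along which this outer flux tends to $0$. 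For the flux through $\{u=\ep\}$, as $\ep\to0^+$ the level sets $\{u=\ep\}\cap\overline N$ converge to $\pa N$, which is a union of connected components of the totally geodesic boundary $\pa M$; the trace of the Gauss equation together with $\RRR=n(n-1)$ then gives $\Ric(\nu,\nu)=\tfrac12\big(n(n-1)-\RRR^{\pa N}\big)$ on $\pa N$, whence $\tfrac1u\langle\,\D\psi\,|\,\nu\,\rangle\to-|\D u|\big[\RRR^{\pa N}-(n-1)(n-2)\big]$ and the inner flux converges to $-\int_{\pa N}|\D u|\big[\RRR^{\pa N}-(n-1)(n-2)\big]\rmd\sigma$. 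Passing to the limit $\ep\to0$, $\delta_k\to0$ in the displayed identity --- whose left hand side converges to the nonnegative quantity $\int_N 2u\,|\Ric-(n-1)\go|^2\,\rmd\mu$ --- gives exactly inequality~\eqref{eq:BGH_rev}.

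For the rigidity part, equality forces $\int_N 2u\,|\Ric-(n-1)\go|^2\,\rmd\mu=0$, hence $\Ric\equiv(n-1)\go$ and $\DD u\equiv-u\go$ on $N$; by analyticity of $\go$ and $u$ these identities propagate to the whole $M$, so that $|\D u|^2+u^2$ is constant on $M$ and, evaluated at any maximum point of $u$, equals $\umax^2$. In particular $|\D u|/\umax\equiv1$ on $\pa M$, and the equality case of Theorem~\ref{thm:shen_D_dritto} --- alternatively, the doubling argument along $\pa M$ plus Obata's theorem~\cite[Theorem~2]{Obata} --- shows that $(M,\go,u)$ is isometric, up to a normalization of $u$, to the de Sitter solution~\eqref{eq:D}. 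The step I expect to be most delicate is the vanishing of the outer flux: no a priori control on the size or regularity of ${\rm MAX}(u)$ is available, so the limit along $\{u=\umax-\delta\}$ cannot be taken naively, and this is precisely where the technical hypothesis $|\D u|^2\le C(\umax-u)$ enters (and, as remarked after the statement, it can be dispensed with at the cost of more work). The other ingredients --- the pointwise flux identity and the Gauss-equation computation on the totally geodesic $\pa N$ --- are entirely routine.
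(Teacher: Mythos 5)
Your overall strategy is exactly the paper's: integrate Shen's identity over a truncated region $\{\ep\le u\le\umax-\delta\}\cap N$, identify both fluxes via the pointwise identity $\tfrac1u\langle\,\D(|\D u|^2+u^2)\,|\,\nu\,\rangle=2|\D u|\big[\Ric(\nu,\nu)-(n-1)\big]$, let the inner level sets tend to the totally geodesic $\pa N$ and apply the Gauss equation, and make the outer flux disappear along a sequence of regular levels near $\umax$. The pointwise computations, the sign bookkeeping and the rigidity discussion are all correct. The one step that is not justified as written --- and it is precisely the one you yourself flag as delicate --- is the vanishing of the outer flux. You argue that, by coarea, $\int_{\umax-\delta}^{\umax}\big(\int_{\{u=s\}\cap N}|\D u|\,\rmd\sigma\big)\rmd s=\int_{\{u>\umax-\delta\}\cap N}|\D u|^2\,\rmd\mu\to0$ as $\delta\to0$, and that this lets you ``select regular values $\umax-\delta_k\to\umax$ along which the outer flux tends to $0$''. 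That inference is a non sequitur: the tail integral of \emph{any} integrable function tends to $0$ (take the level-area-times-gradient function identically equal to $1$), so it gives no control on $\liminf_{s\to\umax}\int_{\{u=s\}\cap N}|\D u|\,\rmd\sigma$; in particular, as written, the hypothesis $|\D u|^2\le C(\umax-u)$ is never genuinely used.

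The correct use of the hypothesis, which is how the paper proceeds, is through the \emph{weighted} coarea identity: setting $f(s)=\int_{\{u=s\}\cap N}|\D u|\,\rmd\sigma$, one has
\begin{equation*}
\int_{\umax-\alpha}^{\umax}\frac{f(s)}{\umax-s}\,\rmd s\,=\int\limits_{\{u>\umax-\alpha\}\cap N}\frac{|\D u|^2}{\umax-u}\,\rmd\mu\,\,\le\,\,C\,|N|\,<\,+\infty\,,
\end{equation*}
and since $(\umax-s)^{-1}$ is not integrable at $s=\umax$, finiteness of the left-hand side forces $\liminf_{s\to\umax^-}f(s)=0$, which is exactly what you need. (Alternatively, your averaging idea can be salvaged: the hypothesis gives $\int_{\umax-\delta}^{\umax}f(s)\,\rmd s\le C\,\delta\,\big|\{u>\umax-\delta\}\cap N\big|=o(\delta)$, because the superlevel sets shrink to the $n$-dimensionally negligible set ${\rm MAX}(u)\cap\overline N$; dividing by $\delta$ and applying the mean value theorem for integrals then produces the desired sequence of levels. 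But some such quantitative input beyond ``the tail integral tends to zero'' is indispensable.) With this step repaired, the rest of your proof goes through and coincides with the paper's argument.
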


\begin{proof}
In Subsection~\ref{sub:prelim} we have recalled that $|\D u|\neq 0$ on $\pa M=\{u=0\}$, and that the critical values of $u$ are always discrete. Therefore, from the compactness of $M$ and the properness of $u$, it follows that we can choose $\ep>0$ so that the level sets $\{u=t\}$ are regular for all $0\leq t\leq\ep$ and for all $\umax-\ep\leq t<\umax$. From Proposition~\ref{pro:div_BGH_AD} we have
\begin{equation*}
{\rm div}\left[\frac{1}{u}\left(|\D u|^2+u^2\right)\right]\,\,=\,\,\frac{2}{u}\,\Big[\,|\DD u|^2-\frac{(\De u)^2}{n}\,\Big]\,\,\geq\,\,0\,.
\end{equation*}
To simplify the computations, we are going to integrate by parts the inequality
\begin{equation}
\label{eq:stolta}
{\rm div}\left[\frac{1}{u}\left(|\D u|^2+u^2\right)\right]\,\,\geq \,\,0 \, .
\end{equation}
Proceeding in this way, we are going to prove the validity of the inequality mentioned in the statement of the theorem. In order to deduce the rigidity one has to keep into account also the quadratic term 
$$\Big[\,|\DD u|^2-\frac{(\De u)^2}{n}\,\Big] \, .$$
However, since this part of the argument is completly similar to what we have done in the previous subsection, we omit the details, leaving them to the interested reader. Integrating inequality~\eqref{eq:stolta} on the domain $\{\ep<u<\umax-\ep\}\cap N$ and using the Divergence Theorem, we obtain
\begin{equation}
\label{eq:int_part_fin_BGH_D}
\int_{\{u=\umax-\ep\}\cap N} \left\langle \frac{\D\big(|\D u|^2\!+u^2)}{u}\,\Bigg|\, \nu\right\rangle\,\, \rmd\sigma\,\,\,\geq \,\,\,\int_{\{u=\ep\}\cap N} \left\langle \frac{\D\big(|\D u|^2\!+u^2)}{u}\,\Bigg|\, \nu\right\rangle\,\, \rmd\sigma \, ,
\end{equation}  
where we have used the short hand notation $\nu=\D u/|\D u|$, for the unit normal to the set $\{\ep<u<\umax-\ep\}\cap N$. 
Using the first equation in~\eqref{eq:pb_D}, we get
\begin{align*}
\left\langle \frac{\D\big(|\D u|^2\!+u^2)}{u}\,\Bigg|\, \nu\right\rangle
\, &= \, {2}\left[\,  \frac{\DD u(\D u,\D u)+u|\D u|^2}{{u\,|\D u|}} \, \right]
\\
&= \, {2}\left[ \, \frac{\Ric(\D u,\D u)\,-\,n\,|\D u|^2+\,|\D u|^2}{{|\D u|}} \, \right] \, = \, 2\,|\D u|\left[ \, \Ric(\nu,\nu)\,-\,(n-1) \, \right]\,.
\end{align*}
In view of this identity, inequality~\eqref{eq:int_part_fin_BGH_D} becomes
\begin{equation}
\label{eq:div_aux}
\int_{\{u=\umax-\ep\}\cap N}\!\!\!\!\!|\D u| \, \left[\,  \Ric(\nu,\nu)-(n-1)\, \right] \,\,\rmd\sigma\,\,\,\geq \,\,\, \int_{\{u=\ep\}\cap N}\!\!\!\!\!|\D u|\, \left[ \Ric(\nu,\nu)-(n-1)\right]\,\, \rmd\sigma\,.
\end{equation}
We now claim that the $\liminf$ of the left hand side vanishes when $\ep \to 0$.
Since $M$ is compact and $\go$ is smooth, the quantity $\Ric(\nu,\nu)-(n-1)$ is continuous, thus bounded, on $M$. Therefore, to prove the claim, it is sufficient to show that 
\begin{equation*}
\liminf_{t \to \umax}\int_{\{u=t\}\cap N}\!\!\!|\D u|\,\,\rmd\sigma\,=\,0\,.
\end{equation*}
If this is not the case, then we can suppose that the $\liminf$ in the above formula is equal to some positive constant $\delta>0$. This means that up to choose a small enough $\alpha>0$, we could insure that 
\begin{equation*}
\int_{\{u=t\}\cap N}\!\!\!|\D u|\,\,\rmd\sigma \,\, \geq \,\, \frac{\delta}{2} \, , \quad \hbox{for $\umax - \alpha < t <\umax$} \,.
\end{equation*}
Combining this fact with the coarea formula, one has that for every $0< \ep < \alpha$, it holds
\begin{align*}
\int_{\{\umax-\alpha<u<\umax - \ep \}\cap N}\left(\frac{|\D u|^2}{\umax-u}\right)\,\,\rmd\mu &\,\,=\,\,\int_{\umax-\alpha}^{\umax - \ep}  \!\!\frac{\rmd t}{\umax -t}\int_{\{u=t\}\cap N}\!\!\!{|\D u|}\,\rmd\sigma  \,\, \geq \\
\geq \,\,  \frac{\delta}{2} \, \int_{\umax-\alpha}^{\umax - \ep}  \!\!\frac{\rmd t}{\umax -t} &\,\, = \,\, \frac{\delta}{2} \, \int_{\ep}^{\alpha}  \frac{\rmd \tau}{\tau} \,  .
\end{align*}
Now, in view of the assumption $|\D u|^2\leq C(\umax-u)$, the leftmost hand side is bounded above by $C \, | N|$. On the other hand, the rightmost hand side tends to $+\infty$, as $\ep \to 0$. Since we have reached a contradiction, the  claim is proven. Hence, taking the $\liminf_{\ep \to 0}$ in~\eqref{eq:div_aux}, we arrive at
\begin{equation}
\label{eq:BGH_aux}
\int_{\pa N}|\D u|\left[ -\,\Ric(\nu,\nu)\,+\,(n-1)\right]\, \rmd\sigma\,\geq\,0\,.
\end{equation}
To conclude, we observe that, on the totally geodesic boundary $\pa N$ of our connected component, the Gauss equation reads
$$
-\Ric(\nu,\nu)\,=\,\frac{\RRR^{\pa N}-\RRR}{2}\,=\,
\frac{\RRR^{\pa N}-\,n(n-1)}{2} \, .
$$
Substituting the latter identity in formula~\eqref{eq:BGH_aux} we obtain the thesis.
\end{proof}

\section{Local lower bound for the surface gravity}
\label{sec:monoform}

\noindent In this section we focus on the case $\Lambda>0$, and we are going to present the
complete proof of Theorem~\ref{thm:main_D}. As discussed in Subsection~\ref{sub:surfmass}, the local nature of this lower bound for the {\em surface gravity} is at the basis of our definition of {\em virtual mass}, as explained in Theorem~\ref{thm:PMS}.

\subsection{Some preliminary results}

\label{sub:monoform}

As usual, we denote by $N$ a connected component of $M \setminus {\rm MAX}(u)$. The next lemma shows that the set $\pa N = \pa M \cap N$ is always nonempty, and thus it is necessarily given by a disjoint union of {\em horizons}. 

\begin{lemma}[No Islands Lemma, $\Lambda>0$]
\label{lem:noisole}
Let $(M,\go,u)$ be a solution to problem~\eqref{eq:pb_D} and let $N$ be a 
connected component of $M\setminus{\rm MAX}(u)$. Then $N \cap \pa M \neq \emptyset$.
\end{lemma}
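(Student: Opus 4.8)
The plan is to argue by contradiction: suppose some connected component $N$ of $M\setminus{\rm MAX}(u)$ satisfies $N\cap\pa M=\emptyset$. Then $\overline{N}$ is a compact submanifold of the interior of $M$, whose topological boundary $\pa\overline{N}$ is contained in ${\rm MAX}(u)$, so that $u\equiv\umax$ on $\pa\overline{N}$ while $u<\umax$ in $N$. Since $\pa M\cap\overline N=\emptyset$, the function $u$ is bounded away from zero on $\overline N$, so the coefficient $1/u$ appearing in Shen's Identity~\eqref{eq:div_BGH_AD} is smooth and bounded on all of $\overline N$, and no delicate limiting argument near $\pa M$ is needed.

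First I would invoke Proposition~\ref{pro:div_BGH_AD} together with $\De u=-nu$ exactly as in~\eqref{eq:div_BGH_rev_D}, to get on $\overline N$ the differential inequality
\begin{equation*}
\De\bigl(|\D u|^2+u^2\bigr)-\frac{1}{u}\bigl\langle\,\D u\,\big|\,\D\bigl(|\D u|^2+u^2\bigr)\,\bigr\rangle\,\,\geq\,\,0\,.
\end{equation*}
The operator $L(\cdot)=\De(\cdot)-\tfrac1u\langle\D u\,|\,\D(\cdot)\rangle$ is a linear elliptic operator with smooth bounded coefficients on $\overline N$ with no zeroth order term, so the Weak Maximum Principle applies: $|\D u|^2+u^2$ attains its maximum over $\overline N$ on the boundary $\pa\overline N\subset{\rm MAX}(u)$. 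On $\pa\overline{N}$ we have $u=\umax$ and, since ${\rm MAX}(u)$ is where the maximum of $u$ is achieved, every point of $\pa\overline N$ is a critical point of $u$, hence $|\D u|=0$ there. Therefore $|\D u|^2+u^2\leq\umax^2$ throughout $\overline N$.

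Next I would observe that this upper bound is actually an equality. Choose any $p\in N$ with $u(p)$ as close as possible to $\umax$; more cleanly, pick a point $q\in\pa\overline N\subset{\rm MAX}(u)$ and note that at $q$ one has $|\D u|^2+u^2=\umax^2$, so the maximum of $|\D u|^2+u^2$ over $\overline N$ equals $\umax^2$ and is attained at the interior-type boundary point $q$; but $q$ lies in the closure of the open set $N$ on which $L$ is uniformly elliptic with smooth coefficients, so the Strong Maximum Principle (in the form: a subsolution of $Lw\geq0$ attaining an interior maximum is constant) forces $|\D u|^2+u^2\equiv\umax^2$ on $\overline N$. Alternatively, since $L\bigl(|\D u|^2+u^2\bigr)\geq0$ and the function is $\leq\umax^2$ with equality somewhere on $\overline N$, the Hopf-type strong maximum principle gives the same conclusion. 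Feeding $|\D u|^2+u^2\equiv\umax^2$ back into~\eqref{eq:div_BGH_rev_D} yields $|\DD u|^2=(\De u)^2/n$, hence $\DD u=-u\,\go$, and then from the first equation in~\eqref{eq:pb_D} one gets $\Ric=(n-1)\,\go$. But $\DD u=-u\,\go$ together with $u\equiv\umax$ and $|\D u|=0$ on $\pa\overline N$ means $u$ has a nondegenerate interior maximum along $\pa\overline N$, and the Obata-type analysis (as in the proof of Theorem~\ref{thm:shen_D_dritto}) shows $N$ cannot close up as claimed — concretely, the gradient flow of $u$ emanating from ${\rm MAX}(u)$ and the structure of the level sets of a solution of $\DD u=-u\go$ force $u$ to reach the value $0$ inside $\overline N$, contradicting $\pa M\cap\overline N=\emptyset$.

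The main obstacle is this last step: turning the rigidity $\DD u=-u\,\go$ on $\overline N$ into the desired contradiction, i.e.\ showing that an "island" cannot support such a Hessian equation without touching $\{u=0\}$. I expect the cleanest route is to run the flow of $\D u/|\D u|$ away from a regular level set of $u$ inside $N$ and use the ODE satisfied by $u$ along it (from $\DD u=-u\,\go$ one gets $\ddot u=-u$ in arclength near a regular point, giving sinusoidal behavior), so that $u$ must eventually vanish; since $\pa M=\{u=0\}$ this produces a boundary point in $\overline N$, contradicting the hypothesis. I would double-check that the doubling/Obata argument of Theorem~\ref{thm:shen_D_dritto} can be localized here, or else give the direct flow argument; either way the analytic input is exactly Shen's Identity plus the maximum principle, which are already available.
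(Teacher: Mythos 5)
Your argument is both overbuilt and, at two places, genuinely incomplete, whereas the paper disposes of this lemma in a few lines without Shen's identity. The key observation you missed is that $u$ itself is a supersolution: the scalar equation gives $\De u=-nu\le 0$ on $N$, so the Weak Minimum Principle applied directly to $u$ on $\overline{N}$ yields $\min_{\overline N}u=\min_{\overline N\setminus N}u$. Under your contradiction hypothesis one has $\overline N\setminus N\subseteq{\rm MAX}(u)$, so this minimum equals $\umax$, i.e.\ $u\equiv\umax$ on $N$; by analyticity $u$ is then constant on all of $M$, which is absurd (because $u$ vanishes on $\pa M$ and is positive inside, or simply because $N\subseteq M\setminus{\rm MAX}(u)$ forces $u<\umax$ there). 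No information about $|\D u|^2+u^2$, no rigidity, and no flow argument are needed.

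As for the gaps in your route: first, the Strong Maximum Principle cannot be invoked at the point $q\in\pa\overline N$. The inequality $|\D u|^2+u^2\le\umax^2$ is only known on $\overline N$, and $q$ is a boundary point of that set, not an interior maximum point; a subsolution may perfectly well attain its maximum only on the boundary without being constant. The Hopf boundary-point alternative does not rescue this, because $\pa\overline N\subseteq{\rm MAX}(u)$ is merely an analytic set and need not satisfy an interior ball condition at $q$; this is precisely why, in the proof of Theorem~\ref{thm:shen_D_dritto}, the authors arrange for ${\rm MAX}(u)$ to lie in the \emph{interior} of the domain $M_\eta$ before applying the strong principle. Second, even granting $\DD u=-u\,\go$ on $\overline N$, your concluding contradiction is only sketched; making the ``sinusoidal gradient flow'' argument rigorous requires starting from a regular level set away from ${\rm MAX}(u)$ (where $\D u$ vanishes), checking that the unit-speed integral curves of $\D u/|\D u|$ are geodesics along which $u$ satisfies $\ddot v=-v$, and verifying that such a curve cannot exit $N$ through ${\rm MAX}(u)$ before $u$ reaches $0$. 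All of this can be carried out, but it is exactly the work that the superharmonicity of $u$ renders unnecessary.
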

\begin{proof} 
Let $N$ be a connected component of $M \setminus {\rm MAX}(u)$ and assume by contradiction that $N \cap \pa M = \emptyset$. Since ${\rm MAX}(u) \cap \pa M = \emptyset$, one has that $\overline{N} \setminus N \subseteq {\rm MAX}(u)$, where we have denoted by $\overline{N}$ the closure of $N$ in $M$. On the other hand, the scalar equation in~\eqref{eq:pb_D} implies that $\Delta u \leq 0$ in $N$ and thus, by the Weak Minimum Principle , one can deduce that
\begin{equation*}
\min_{\overline N} u \,\, = \,\, \min_{\overline{N} \setminus N} u \,\, \geq \,\, \min_{{\rm MAX}(u)} u \,\, = \,\, \umax \, .
\end{equation*}
In other words $u \equiv \umax$ on $N$. Since $N$ has non-empty interior, $u$ must be constant on the whole $M$, by analyticity. This yields the desired contradiction.
\end{proof}

\noindent As an easy application of the Maximum Principle, we obtain the following gradient estimate, which is the first step in the proof of the main result.

\begin{lemma}
\label{le:shen_aux_D}
Let $(M,\go,u)$ be a solution to problem~\eqref{eq:pb_D}, let $N$ be a 
connected component of $M\setminus{\rm MAX}(u)$, and let $\pa N = \pa M \cap N$ be the non-empty and possibly disconnected boundary portion of $\pa M$ that lies in $N$.
If $|\D u|\leq \umax$ on $\pa N$, then it holds $|\D u|^2\leq \umax^2-u^2$ on the whole $N$.
\end{lemma}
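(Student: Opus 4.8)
The plan is to rerun on the single connected component $N$ the Maximum Principle argument from the first part of the proof of Theorem~\ref{thm:shen_D_dritto}. Setting $w = |\D u|^2 + u^2$ and combining $\De u = -nu$ with Shen's Identity~\eqref{eq:div_BGH_AD}, one obtains precisely formula~\eqref{eq:div_BGH_rev_D}, namely
\begin{equation*}
\Delta w \, - \, \frac{1}{u}\,\big\langle\,\D u\,\big|\,\D w\,\big\rangle \,\, = \,\, 2\Big[\,|\DD u|^2 - \frac{(\De u)^2}{n}\,\Big] \,\, \geq \,\, 0 \qquad \hbox{in } M\setminus\pa M \, ,
\end{equation*}
so that $w$ is a subsolution of a locally uniformly elliptic operator whose only bad feature is the blow-up of the coefficient $1/u$ along $\pa M$. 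The goal is to show $w\leq\umax^2$ on $N$, which is exactly the claimed bound $|\D u|^2\leq\umax^2-u^2$.

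The closure $\overline{N}$ of $N$ in $M$ has two relevant boundary portions. Along $\pa N = \pa M\cap N$ one has $u\equiv 0$, hence $w = |\D u|^2\leq\umax^2$ by hypothesis. The remaining part $\overline N\setminus N$ is, by the argument in the proof of Lemma~\ref{lem:noisole}, contained in ${\rm MAX}(u)$ --- and it is nonempty, since otherwise $N$ would be a nonempty open and closed subset of $M$, forcing $N=M$ and contradicting ${\rm MAX}(u)\neq\emptyset$. On $\overline N\setminus N$ we have $u\equiv\umax$ and, the points of ${\rm MAX}(u)$ being interior maximum points of $u$ on $M$ (recall ${\rm MAX}(u)\cap\pa M=\emptyset$), also $\D u\equiv 0$; thus $w\equiv\umax^2$ there.

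To deal with the singularity of $1/u$ near $\pa N$, I would argue exactly as in Theorem~\ref{thm:shen_D_dritto}: since $|\D u|>0$ on $\pa M$ and the critical values of the analytic function $u$ are discrete, there is $\eta>0$ for which $\{u=t\}$ is a regular level set whenever $0<t\leq\eta$, and for $0<\ep\leq\eta$ the set $N_\ep := \overline N\cap\{u\geq\ep\}$ is compact with $1/u\leq 1/\ep$ on it. The Weak Maximum Principle applied to the displayed inequality on $N_\ep$ gives $\max_{N_\ep} w = \max_{\pa N_\ep} w$, with $\pa N_\ep=\big(\{u=\ep\}\cap\overline N\big)\cup\big(\overline N\setminus N\big)$, and $w\equiv\umax^2$ on the second piece. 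A sliding argument letting $\ep\to 0^+$ --- using that $\max_{\{u=\ep\}\cap\overline N} w$ is controlled for small $\ep$ by $\max_{\pa N}|\D u|^2\leq\umax^2$, since these level sets accumulate on $\pa N$ --- then upgrades the above to $\max_{N_\ep} w\leq\umax^2$ for every $0<\ep\leq\eta$. Since every point of $N$ lies either on $\pa N$, where $w\leq\umax^2$ directly, or in some $N_\ep$, we conclude $w\leq\umax^2$ on $N$, as desired.

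Nothing here is genuinely difficult; the only delicate point, entirely analogous to the proof of Theorem~\ref{thm:shen_D_dritto}, is the blow-up of $1/u$ at $\pa N$. As an alternative to the exhaustion $\{N_\ep\}$, one can simply let $w$ attain its maximum over the compact set $\overline N$ at a point $p_0$ and split into cases: if $p_0\in\overline N\setminus N$ then $w(p_0)=\umax^2$; if $p_0\in\pa N$ then $w(p_0)=|\D u(p_0)|^2\leq\umax^2$; and if $p_0$ lies in the interior region $\{u>0\}$, where the operator above has locally bounded coefficients, the Strong Maximum Principle together with the connectedness of $N$ forces $w$ to be constant on $N$, hence equal to $\umax^2$ by continuity up to the nonempty set $\overline N\setminus N$ --- so in all cases $\max_{\overline N} w\leq\umax^2$.
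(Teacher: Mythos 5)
Your proof is correct and follows essentially the same route as the paper: the Maximum Principle applied to the elliptic inequality~\eqref{eq:div_BGH_rev_D} for $|\D u|^2+u^2$ on an exhaustion of $N$ that cuts away the region where the coefficient $1/u$ blows up, with the boundary data $|\D u|^2\leq\umax^2$ on $\pa N$ and $|\D u|^2+u^2=\umax^2$ on ${\rm MAX}(u)$. The only immaterial difference is that the paper also truncates near ${\rm MAX}(u)$, working on $N_\ep=N\cap\{\ep\leq u\leq\umax-\ep\}$, whereas you keep the possibly irregular portion $\overline{N}\cap{\rm MAX}(u)$ in the boundary of your domains and evaluate $|\D u|^2+u^2$ there directly --- which, as the paper itself remarks, causes no trouble for the applicability of the Maximum Principle.
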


\begin{proof}
The thesis will essentially follow by the Maximum Principle applied to the equation~\eqref{eq:div_BGH_rev_D} on the whole domain $N$. However, as in the proof of Theorem~\ref{thm:shen_A_dritto}, we have to pay attention to the coefficient $1/u$, which blows up at the boundary $\pa N$. We also notice that in general the set $\overline{N} \cap {\rm MAX}(u)$ is not necessarily a regular hypersurface. Albeit this does not represent a serious issue for the applicability of the Maximum Principle, we are going to adopt the same treatment for both $\pa N$ and $\overline{N} \cap {\rm MAX}(u)$, considering  subdomains of the form $N_\ep = N \cap \{\ep\leq u\leq \umax-\ep\}$, for $\ep$ sufficiently small. To be more precise, we first recall from Subsection~\ref{sub:prelim} that the function $u$ is analytic and then the set of its critical values is discrete. Therefore, there exists $\eta>0$ such that, for every $0<\ep\leq\eta$ the level sets $\{u=\ep \}$ and $\{u=\umax-\ep\}$ are regular. Applying the Maximum Principle to equation~\eqref{eq:div_BGH_rev_D}, we get
\begin{equation*}
\max_{N_\ep} (|\D u|^2+u^2) \, \leq \, \max_{\pa N_\ep} (|\D u|^2+u^2) \,.
\end{equation*}
On the other hand we have that $|\D u|^2+u^2=\umax^2$ on ${\rm MAX}(u)$, and $|\D u|^2+u^2\leq \umax^2$ on $\pa N$, by our assumption. Hence, letting $\ep \to 0$ in the above inequality, we get the desired conclusion.
%
%
%
\end{proof}

\subsection{Monotonicity formula.}
\label{sub:monotonicity_D}

Let $(M,\go,u)$ be a solution to problem~\eqref{eq:pb_D}, and let $N$ be a connected component of $M\setminus{\rm MAX}(u)$. Proceeding in analogy with~\cite{Ago_Maz_2,Bor_Maz}, we introduce the function $U:[0,\umax)\rightarrow\R$ given by
\begin{equation}
\label{eq:Up_D}
t \,\, \longmapsto \,\, U(t) \, = \, \Big(\frac{1}{\umax^2-t^2}\Big)^{\!\!\frac{n}{2}}\!\!\!\!\!\! \int\limits_{ \{ u = t \}\cap N} \!\!\!\!  |\D u| \, \rmd \sigma \,.
\end{equation}
We remark that the function $t \mapsto U(t)$ is well defined, since the integrand is globally bounded and the level sets of $u$ have finite hypersurface area. In fact, since $u$ is analytic (see~\cite{Chrusciel_1,ZumHagen}), the level sets of $u$ have locally finite $\mathscr{H}^{n-1}$--measure by the results in~\cite{Kra_Par}. Moreover, they are compact and thus their hypersurface area is finite. To give further insights about the definition of the function $t \mapsto U(t)$, we note that, using the explicit formul\ae~\eqref{eq:D}, one easily realizes that the quantities
\begin{equation}
\label{eq:Pfunct_confvol_D}
M \, \ni \,x \, \longmapsto \,\,  \frac{|\D u|}{\sqrt{\umax^2-u^2}}(x) \quad \hbox{and} \quad  [0, \umax) \, \ni \, t \, \longmapsto \!\!\!\int\limits_{\{u = t\}}\!\!
 \Big(\frac{1}{\umax^2-u^2}\Big)^{\!\frac{n-1}{2}}
 \rmd\sigma 
\end{equation}
are constant on the de Sitter solution. We notice that the function $t \mapsto U(t)$ can be rewritten in terms of the above quantities as
\begin{equation*}
{ U}(t)\,\,=\!\!\!
\int\limits_{\{u = t\}\cap N}\!\!\!
\left( \frac{|\D u|}{\sqrt{\umax^2-u^2}} \right) \Big(\frac{1}{\umax^2-u^2}\Big)^{\!\frac{n-1}{2}}
\,\, \rmd\sigma \,,
\end{equation*}
hence the function $t \mapsto U(t)$ is constant on the de Sitter solution. In the next proposition we are going to show that, for a general solution, the function $U$ is monotonically nonincreasing, provided the surface gravity of the connected component of $\pa N$ is bounded above by $1$.
\begin{proposition}[Monotonicity, case $\Lambda>0$]
\label{pro:main_D}
Let $(M,\go,u)$ be a solution to problem~\eqref{eq:pb_D}, let $N$ be a 
connected component of $M\setminus{\rm MAX}(u)$, and let $\pa N = \pa M \cap N$ be the non-empty and possibly disconnected boundary portion of $\pa M$ that lies in $N$. If $|\D u|\leq \umax$ on $\pa  N$, then the function $U(t)$ defined in~\eqref{eq:Up_D} is monotonically nonincreasing. 
\end{proposition}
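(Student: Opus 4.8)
The plan is to prove the monotonicity of $U$ by differentiating it with respect to $t$, expressing $U'(t)$ via the coarea formula and a boundary integral of the vector field appearing in Shen's Identity~\eqref{eq:div_BGH_AD}, and then showing that sign of $U'(t)$ is controlled by the nonnegative quantity $|\DD u|^2 - (\De u)^2/n$ integrated over a superlevel set, together with a boundary term that is nonpositive precisely because of the gradient estimate in Lemma~\ref{le:shen_aux_D}. First I would reduce to the regular level sets: since $u$ is analytic, its critical values form a discrete set, so it suffices to prove $U'(t) \leq 0$ for all regular values $t \in (0,\umax)$, and then conclude monotonicity on the whole interval by continuity of $U$. (Here one should also note that $U$ extends continuously at $t=0$ to $\umax^{-n} \int_{\pa N} |\D u|\, \rmd\sigma$.)

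For a regular value $t$, set $N_t = \{u > t\}\cap N$ (or rather the portion bounded between $\{u=t\}$ and ${\rm MAX}(u)$). I would apply the Divergence Theorem to the vector field $\frac{1}{u}\big(\D|\D u|^2 - \frac{2}{n}\De u\,\D u\big) = \frac1u \D(|\D u|^2 + u^2)$ — using $\De u = -nu$ — on the domain $N_t \cap \{u < \umax - \ep\}$, let $\ep \to 0$, and control the inner contribution near ${\rm MAX}(u)$. The boundary integral at $\{u=t\}$ then reads, up to the factor $1/t$ and using $\nu = \D u/|\D u|$ and the first equation of~\eqref{eq:pb_D} exactly as in the proof of Theorem~\ref{thm:BGH_rev}, as a multiple of $\int_{\{u=t\}\cap N} |\D u|\big[\Ric(\nu,\nu) - (n-1)\big]\,\rmd\sigma$; combining with the volume integral of $\frac{2}{u}\big[|\DD u|^2 - (\De u)^2/n\big] \geq 0$ gives an inequality relating the flux through $\{u=t\}$ to a nonnegative bulk term. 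The point is that one wants to massage this into a statement that the $t$-derivative of $\big(\umax^2 - t^2\big)^{-n/2}\int_{\{u=t\}\cap N}|\D u|\,\rmd\sigma$ has a sign.

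Concretely, I would compute $U'(t)$ directly: by the coarea formula and the fact that $\frac{\pa}{\pa \nu}$ of a quantity relates to its level-set derivative, one gets
\begin{equation*}
U'(t) \,=\, \Big(\frac{1}{\umax^2-t^2}\Big)^{\!\frac n2}\!\!\int\limits_{\{u=t\}\cap N}\!\!\Big[\frac{\De u}{|\D u|} + \frac{\langle \D|\D u|^2 \,|\, \D u\rangle}{|\D u|^3} + \frac{n\,t}{\umax^2-t^2}|\D u|\Big]\rmd\sigma\,,
\end{equation*}
and using $\De u = -nt$, $\Ric(\D u,\D u) = \DD u(\D u,\D u) + nt|\D u|^2$ together with $\DD u(\D u,\D u) = \tfrac12\langle\D|\D u|^2\,|\,\D u\rangle$, one can rewrite the integrand as a combination of $\Ric(\nu,\nu) - (n-1)$ and a term proportional to $\umax^2 - t^2 - |\D u|^2$, which is nonnegative on $N$ exactly by Lemma~\ref{le:shen_aux_D}. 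Then the flux inequality derived above — which bounds $\int_{\{u=t\}\cap N}|\D u|[\Ric(\nu,\nu)-(n-1)]\rmd\sigma$ from above by the contribution near the maximum, which one shows vanishes or is controlled — yields $U'(t)\leq 0$.

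The main obstacle I expect is handling the behaviour near ${\rm MAX}(u)$, which need not be a regular hypersurface: one must justify that the flux of $\frac1u\D(|\D u|^2+u^2)$ through level sets $\{u = \umax - \ep\}$ behaves well as $\ep\to 0$ (so that no spurious boundary term survives there), and that the various integrations by parts are legitimate despite the possible singularity of the maximum set. This is the analogue of the delicate point in Theorem~\ref{thm:BGH_rev} where the coarea argument with the hypothesis $|\D u|^2 \leq C(\umax - u)$ was invoked — here the corresponding bound $|\D u|^2 \leq \umax^2 - u^2$ comes for free from Lemma~\ref{le:shen_aux_D}, so one should be able to run the same contradiction argument to show $\liminf_{t\to\umax}\int_{\{u=t\}\cap N}|\D u|\,\rmd\sigma = 0$, closing the estimate. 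A secondary technicality is the arbitrariness of regular values and the continuity of $U$ across the discrete set of critical values, which is routine given analyticity of $u$.
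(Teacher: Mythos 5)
Your guiding idea is the right one --- the sign of $U'$ is ultimately controlled by the pointwise inequality $|\D u|^2\leq\umax^2-u^2$ supplied by Lemma~\ref{le:shen_aux_D} --- but the concrete computation contains an error that derails the rest of the plan. For regular values $t$ the first variation of the level-set integral is
\begin{equation*}
\frac{\rmd}{\rmd t}\int\limits_{\{u=t\}\cap N}\!\!|\D u|\,\rmd\sigma\;=\!\!\int\limits_{\{u=t\}\cap N}\!\!\frac{\De u}{|\D u|}\,\rmd\sigma
\end{equation*}
(Divergence Theorem plus coarea formula applied to $\D u$ between two regular levels): the normal-derivative term $\DD u(\nu,\nu)/|\D u|$ is exactly cancelled by the mean-curvature contribution, so the summand $\langle\D|\D u|^2\,|\,\D u\rangle/|\D u|^3$ in your displayed formula for $U'(t)$ should not be there. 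With the correct formula and $\De u=-nu$ one finds directly
\begin{equation*}
U'(t)\;=\;-\,\frac{n\,t}{(\umax^2-t^2)^{\frac n2+1}}\int\limits_{\{u=t\}\cap N}\frac{\umax^2-u^2-|\D u|^2}{|\D u|}\,\rmd\sigma\;\leq\;0
\end{equation*}
by Lemma~\ref{le:shen_aux_D}; no Ricci term appears, and neither Shen's identity nor any analysis near ${\rm MAX}(u)$ is needed. This is precisely the derivative form of the paper's argument, which instead integrates the pointwise inequality ${\rm div}\big[\D u\,(\umax^2-u^2)^{-n/2}\big]\leq0$ over $\{t_1\leq u\leq t_2\}\cap N$ with $t_1<t_2<\umax$, thereby never touching the maximum set and avoiding the regular-value discussion altogether.

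Two further reasons why your proposed way of closing the estimate would not go through even granting your formula. First, the flux inequality obtained from Shen's vector field (as in Theorem~\ref{thm:BGH_rev}) controls $\int_{\{u=t\}\cap N}|\D u|\big[\Ric(\nu,\nu)-(n-1)\big]\rmd\sigma$, i.e.\ the Ricci expression weighted by $|\D u|$, whereas the spurious term in your $U'(t)$ equals $2t\big[\Ric(\nu,\nu)-n\big]/|\D u|$, i.e.\ carries the weight $1/|\D u|$; the two integrals are not comparable, so the flux inequality cannot be substituted into $U'(t)$. Second, the delicate behaviour near ${\rm MAX}(u)$, which you correctly identify as the hard point of Theorem~\ref{thm:BGH_rev}, is simply irrelevant for this proposition: it enters only later, in Proposition~\ref{pro:estimate_D} and Theorem~\ref{thm:main_D_dritto}, where the limit of $U(t)$ as $t\to\umax^-$ is studied. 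Once the erroneous term is removed, your argument collapses onto the paper's.
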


\begin{proof}
Recalling $\De u=-nu$, we easily compute
\begin{align}
\notag
{\rm div}\left[\frac{\D u}{(\umax^2-u^2)^{\frac{n}{2}}}\right]\,&=\,\frac{\De u}{(\umax^2-u^2)^{\frac{n}{2}}}+n\,u\,\frac{|\D u|^2}{(\umax^2-u^2)^{\frac{n}{2}+1}}
\\
\label{eq:div_Du_D}
\,&=\,-\frac{n\,u}{(\umax^2-u^2)^{\frac{n}{2}+1}}\left(\umax^2-u^2-|\D u|^2\right)\,\leq\,0\,,
\end{align}
where the last inequality follows from Lemma~\ref{le:shen_aux_D}. Integrating by parts inequality~\eqref{eq:div_Du_D} in $\{t_1\leq u\leq t_2\}\cap N$ for some $t_1<t_2$, and applying the Divergence Theorem, we deduce
\begin{multline}
\label{eq:int_div_Du_S}
\int\limits_{\{u=t_1\}\cap N}\!\!\!\!\bigg\langle\frac{\D u}{(\umax^2-u^2)^{\frac{n}{2}}}\,\bigg|\,{\rm n}\bigg\rangle\,\rmd\sigma
\,+
\!\!\!\int\limits_{\{u=t_2\}\cap N}\!\!\!\!\bigg\langle\frac{\D u}{(\umax^2-u^2)^{\frac{n}{2}}}\,\bigg|\,{\rm n}\bigg\rangle\,\rmd\sigma
\,=\,
\\
-\!\!\!\int\limits_{\{t_1\leq u\leq  t_2\}\cap N}\frac{n\,u}{(\umax^2-u^2)^{\frac{n}{2}+1}}\left(\umax^2-u^2-|\D u|^2\right)\,\leq\,0\,,
\end{multline}
where ${\rm n}$ is the outer $\go$-unit normal to the boundary of the set $\{t_1\leq u\leq t_2\}$. In particular, one has ${\rm n}=-\D u/|\D u|$ on $\{u=t_1\}$ and ${\rm n}=\D u/|\D u|$ on $\{u=t_2\}$, thus formula~\eqref{eq:int_div_Du_S} rewrites as
$$
\int\limits_{\{u=t_2\}\cap N}\!\!\frac{|\D u|}{(\umax^2-u^2)^{\frac{n}{2}}}\,\rmd\sigma\,\,\,\leq \!\!\!\int\limits_{\{u=t_1\}\cap N}\!\!\frac{|\D u|}{(\umax^2-u^2)^{\frac{n}{2}}}\,\rmd\sigma
\,,
$$
from which it follows $U(t_2)\leq U(t_1)$, as wished.
\end{proof}

\subsection{Proof of Theorem~\ref{thm:main_D}.}
\label{sub:main_proof_D}

In the previous subsection, we have shown the monotonicity of the function $U$. In order to prove Theorem~\ref{thm:main_D}, we also need an estimate of the behavior of $U(t)$ as $t$ approaches $\umax$. This is the content of the following proposition.

\begin{proposition}
\label{pro:estimate_D}
Let $(M,\go,u)$ be a solution to problem~\eqref{eq:pb_D}.
Let $N$ be a connected component of $M\setminus{\rm MAX}(u)$ and let $U$ be the function defined as in~\eqref{eq:Up_D}.
If $\mathscr{H}^{n-1}\big({\rm MAX}(u)\cap \overline{N}\big)> 0$, then $\lim_{t\to \umax^-}U(t)=+\infty$. 
\end{proposition}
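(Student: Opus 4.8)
The plan is to estimate the integral $\int_{\{u=t\}\cap N}|\D u|\,\rmd\sigma$ from below by a positive constant as $t\to\umax^-$, and then to observe that the normalizing factor $(\umax^2-t^2)^{-n/2}$ blows up. The crucial point is that, near the set ${\rm MAX}(u)$, the function $u$ behaves quadratically: setting $v=\umax-u$, the equation $\DD u = u\,\Ric - n\,u\,\go$ gives, at a point $p\in{\rm MAX}(u)$ where $\D u=0$, that $\DD v(p) = -\DD u(p) = -\umax\,\Ric(p) + n\,\umax\,\go(p)$; combined with $\Delta u = -nu$, which at $p$ reads $\Delta v(p) = n\umax > 0$, one sees that $v$ vanishes to second order. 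The idea is to exploit this to show that the level sets $\{u=t\}$ accumulate onto ${\rm MAX}(u)\cap\overline N$ with comparable $\mathscr H^{n-1}$-measure, and that $|\D u|$ on them is of order $\sqrt{\umax-t}$, i.e. of order $\sqrt{\umax^2-t^2}$ (since $\umax^2-t^2 = (\umax-t)(\umax+t)$ and $\umax+t\to 2\umax$).

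First I would make this quantitative using the coarea formula. Write, for $0<s<\umax$ close to $\umax$,
\begin{equation*}
\int_{\{\umax-s<u<\umax\}\cap N}|\D u|^2\,\rmd\mu \,\,=\,\, \int_{\umax-s}^{\umax}\!\Big(\int_{\{u=t\}\cap N}|\D u|\,\rmd\sigma\Big)\rmd t\,.
\end{equation*}
If $\liminf_{t\to\umax^-}\int_{\{u=t\}\cap N}|\D u|\,\rmd\sigma = 0$, one could still hope to run the argument, but the natural route is a lower bound on the left-hand side. For this I would use that $\De(u^2) = 2u\De u + 2|\D u|^2 = -2nu^2 + 2|\D u|^2$, hence $|\D u|^2 = \tfrac12\De(u^2) + nu^2$, and integrate over $\{\umax-s<u<\umax\}\cap N$. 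The term $\int nu^2$ is bounded below by $n(\umax-s)^2\,\mathscr H^n(\{\umax-s<u<\umax\}\cap N)$, and the latter volume is bounded below in terms of $s$ by the hypothesis $\mathscr H^{n-1}({\rm MAX}(u)\cap\overline N)>0$ together with a tubular-neighbourhood estimate coming from the quadratic vanishing of $v=\umax-u$ (so that $\{\umax-s<u<\umax\}$ contains a neighbourhood of ${\rm MAX}(u)\cap\overline N$ of thickness comparable to $\sqrt s$, giving volume $\gtrsim \sqrt s$). The boundary term $\tfrac12\int_{\{u=\umax-s\}\cap N}\langle\D(u^2)\,|\,{\rm n}\rangle\,\rmd\sigma$ is controlled since ${\rm n}=-\D u/|\D u|$ there, giving $-\int_{\{u=\umax-s\}\cap N} u\,|\D u|\,\rmd\sigma$, which is $O(1)$ by the earlier gradient bound $|\D u|^2\le\umax^2-u^2$ from Lemma~\ref{le:shen_aux_D} (valid under the running hypothesis $|\D u|\le\umax$ on $\pa N$ — though here one may not want to assume that; see below). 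This yields $\int_{\{\umax-s<u<\umax\}\cap N}|\D u|^2\,\rmd\mu \gtrsim \sqrt s$ for small $s$.

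From the coarea identity and this lower bound, $\int_{\umax-s}^{\umax}\!\big(\int_{\{u=t\}\cap N}|\D u|\,\rmd\sigma\big)\rmd t \gtrsim \sqrt s$, which forces $\int_{\{u=t\}\cap N}|\D u|\,\rmd\sigma$ to be bounded below by a positive constant along a sequence $t_k\to\umax^-$, hence (after a monotonicity or mean-value argument on $t$, or by taking $\liminf$) we obtain $\int_{\{u=t\}\cap N}|\D u|\,\rmd\sigma \ge c(\umax-t)^{-1/2}\cdot(\text{something})$ — more precisely it suffices to know it does not decay faster than any negative power less than the one supplied by $(\umax^2-t^2)^{n/2} = (\umax-t)^{n/2}(\umax+t)^{n/2} \sim (2\umax)^{n/2}(\umax-t)^{n/2}$. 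Then
\begin{equation*}
U(t)\,\,=\,\,\frac{1}{(\umax^2-t^2)^{n/2}}\int_{\{u=t\}\cap N}|\D u|\,\rmd\sigma\,\,\ge\,\,\frac{c'}{(\umax-t)^{n/2}}\,\,\longrightarrow\,\,+\infty\,,
\end{equation*}
which is the claim. The main obstacle I anticipate is making the tubular-neighbourhood volume estimate rigorous when ${\rm MAX}(u)\cap\overline N$ is merely a set of positive $\mathscr H^{n-1}$-measure rather than a smooth hypersurface; I would handle this via the analyticity of $u$ and the structure of analytic varieties (as already invoked in Subsection~\ref{sub:prelim}) together with the second-order Taylor expansion of $u$ transverse to ${\rm MAX}(u)$ governed by the static equations, which pins down the quadratic leading term and hence the $\sqrt s$-thickness. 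A cleaner alternative worth checking is to avoid the volume bound and instead directly estimate $\int_{\{u=t\}\cap N}|\D u|\,\rmd\sigma$ from below by Gauss–Bonnet-type or isoperimetric considerations on the nearly-maximal level sets, but the coarea approach above seems the most robust.
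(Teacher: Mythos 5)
Your scheme has a quantitative error at its core. The identity $|\D u|^2=\tfrac12\De(u^2)+nu^2$ integrated over $\Omega_s=\{\umax-s<u<\umax\}\cap N$ gives
\begin{equation*}
\int_{\Omega_s}|\D u|^2\,\rmd\mu\;=\;n\!\int_{\Omega_s}u^2\,\rmd\mu\;-\!\!\int_{\{u=\umax-s\}\cap N}\!\!\!u\,|\D u|\,\rmd\sigma\,,
\end{equation*}
and the two terms on the right cancel to leading order: on the Schwarzschild--de Sitter model one has $|\D u|\sim\sqrt{2n\umax(\umax-u)}$ and $|\Omega_s|\sim 2A\sqrt{2s/(n\umax)}$, so both right-hand terms equal $2A\,\umax\sqrt{2n\umax}\,\sqrt{s}+o(\sqrt s)$ and the left-hand side is of order $s^{3/2}$, not $\sqrt s$. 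So the boundary term is not a harmless ``$O(1)$'' (which in any case would already be fatal, since your target lower bound $\sqrt s$ tends to zero); it is exactly the same size as the bulk term, and your claimed bound $\int_{\Omega_s}|\D u|^2\gtrsim\sqrt s$ is false. The downstream claim that $\int_{\{u=t\}\cap N}|\D u|\,\rmd\sigma$ stays bounded below by a positive constant is also false: on the models this integral decays like $\sqrt{\umax-t}$. (The proposition survives only because $(\umax^2-t^2)^{-n/2}$ beats this decay.) Two further problems: a coarea/mean-value argument can only control the level-set integral on average, i.e.\ along a sequence, so at best you get $\limsup_{t\to\umax^-}U(t)=+\infty$; the monotonicity you invoke to upgrade this is Proposition~\ref{pro:main_D}, which requires the extra hypothesis $|\D u|\le\umax$ on $\pa N$ not assumed here (as does Lemma~\ref{le:shen_aux_D}, which you also use). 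And the ``tubular-neighbourhood volume estimate'' that you defer as the main obstacle is in fact the entire geometric content of the statement, not a technicality.

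The paper's proof avoids all of this by working pointwise rather than integrally. It uses the {\L}ojasiewicz inequality to get $|\D u|\ge c\,(\umax-u)$ on a neighbourhood of ${\rm MAX}(u)$ (note: a \emph{linear} lower bound, valid near every stratum of the analytic set ${\rm MAX}(u)$, not the $\sqrt{\umax-u}$ behaviour which holds only near nondegenerate hypersurface points), rewrites
\begin{equation*}
U(t)\;=\;\Big(\frac{1}{\umax^2-t^2}\Big)^{\!\frac{n-2}{2}}\!\!\!\int\limits_{\{u=t\}\cap N}\!\!\frac{|\D u|}{(\umax-u)(\umax+u)}\,\rmd\sigma\;\ge\;\frac{c}{2\umax}\Big(\frac{1}{\umax^2-t^2}\Big)^{\!\frac{n-2}{2}}\big|\{u=t\}\cap N\big|\,,
\end{equation*}
and then proves a uniform lower bound on $|\{u=t\}\cap N|$ for all $t$ close to $\umax$ by flowing a smooth open piece $\Sigma_0\subset{\rm MAX}(u)\cap\overline N$ (which exists with full $\mathscr H^{n-1}$-measure by the structure theory of analytic sets) along $\D r$, $r$ the signed distance, and showing the level sets project onto $\Sigma_0$ with controlled metric distortion. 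If you want to keep an integral/coarea flavour you would still need both of these inputs; I would recommend adopting the pointwise route.
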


\begin{proof}
From the {\L}ojasiewicz inequality (see~\cite[Th\'{e}or\`{e}me~4]{Lojasiewicz_1} or~\cite{Kur_Par}), we know that for every point $p\in{\rm MAX}(u)$ there exists a neighborhood $p\ni V_p\subset M$ and real numbers $c_p>0$ and $0<\theta_p<1$, such that for each $x\in V_p$ it holds
$$
|\D u|(x)\,\geq\,c_p\,[\umax-u(x)]^{\theta_p}\,.
$$
Up to possibly restricting the neighborhood $V_p$, we can suppose $\umax-u<1$ on $V_p$, so that for every $x\in V_p$ it holds
$$
|\D u|(x)\,\geq\,c_p\,[\umax-u(x)]\,.
$$ 
Since ${\rm MAX}(u)$ is compact, it is covered by a finite number of sets $V_{p_1},\dots,V_{p_k}$. In particular, setting $c=\min\{c_{p_1},\dots,c_{p_k}\}$, the set $V=V_{p_1}\cup\dots\cup V_{p_k}$ is a neighborhood of ${\rm MAX}(u)$ and the inequality
\begin{equation}
\label{eq:loja}
|\D u|\,\geq\,c\,(\umax-u)
\end{equation}
is fulfilled on the whole $V$. Now we notice that the function $U(t)$ can be rewritten as follows
$$
U(t)
\,\,=\,\,
\bigg(\frac{1}{\umax^2-t^2}\bigg)^{\!\!\frac{n-2}{2}}\!\!\!\!\!\!\!\int\limits_{\{u=t\}\cap N}\!\!\!\frac{|\D u|}{(\umax-u)(\umax+u)} \,\,\rmd\sigma\,.
$$
Thanks to the compactness of $M$ and to the properness of $u$, it follows that for $t$ sufficiently close to $\umax$ we have $\{u=t\}\cap N\subset V$. For these values of $t$, using inequality~\eqref{eq:loja} we obtain the following estimate
$$
U(t)
\,\,\,\geq\,\,\,\frac{c}{2\umax}\,
\bigg(\frac{1}{\umax^2-t^2}\bigg)^{\!\!\frac{n-2}{2}}\!\!\!\!\cdot\,\,\big|\{u=t\}\cap N\big|\,.
$$
Therefore, in order to prove the thesis, it is sufficient to show that if $\mathscr{H}^{n-1}\big({\rm MAX}(u)\cap \overline{N}\big)> 0$, then
\begin{equation}
\label{eq:thesis_estimate_D}
\limsup_{t\to \umax^-}\big|\{u=t\}\cap N\big|>0\,.
\end{equation}
To this end, we recall that, since $u$ is analytic and $\mathscr{H}^{n-1}\big({\rm MAX}(u)\cap \overline{N}\big)> 0$, it follows from~\cite{Lojasiewicz_2} (see also~\cite[Theorem~6.3.3]{Kra_Par}) that the set ${\rm MAX}(u)\cap \overline{N}$ contains a smooth non-empty, relatively open hypersurface $\Sigma$ such that $\mathscr{H}^{n-1}\big(({\rm MAX}(u)\cap\overline{N})\setminus\Sigma\big)=0$. In particular, given a point $p$ on $\Sigma$, we are allowed to consider an open neighbourhood $\Omega$ of $p$ in $M$, where the signed distance to $\Sigma$
$$
r(x)\,=\,
\begin{cases}
+ \, d(x,\Sigma)   & \text{ if } x\in \Omega \cap N\,,
\\
- \, d(x,\Sigma)  & \text{ if } x\in \Omega\setminus N\,.
\end{cases}
$$
is a well defined smooth function (see for instance~\cite{Foote,Kra_Par_smoothdist}, where this result is discussed in full details in the Euclidean setting, however, as it is observed in~\cite[Remarks~(1) and~(2)]{Foote}, the proofs extend with small modifications to the Riemannian setting). In order to prove~\eqref{eq:thesis_estimate_D}, we are going to perform a local analysis, in a compact cylindrical neighborhood $C_\delta \subset \Omega$ of $p$. Let us define such a neighborhood and set up our framework:
\begin{itemize}
\item First consider a smooth embedding $F_0$ of the $(n-1)$-dimensional closed unit ball $\overline{B^{n-1}}$ into $M$ 
$$
F_0 \, : \, \overline{B^{n-1}} \, \hookrightarrow \, M \, , \qquad (\theta^1,\ldots, \theta^{n-1}) \, \mapsto \, F_0 (\theta^1,\ldots, \theta^{n-1}) 
$$
such that $\Sigma_0 \, = \, F_0(\overline{B^{n-1}})$ is strictly contained in the interior of  $\Sigma \cap \Omega$.
\smallskip
\item Given a small enough real number $\delta>0$, use the flow of $\D r$ to extend the map $F_0$ to the cartesian product $[-\delta, \delta] \times \overline{B^{n-1}}$, obtaining a new map 
$$
F \, : \, [-\delta, \delta] \times \overline{B^{n-1}} \, \hookrightarrow \, M \, , \qquad (\rho,\theta^1,\ldots, \theta^{n-1}) \, \mapsto \, F (\rho,\theta^1,\ldots, \theta^{n-1}) 
$$
satisfying the initial value problem
$$
\frac{dF}{d\rho} \, = \, \D r \circ F \, , \qquad F(0,\,  \cdot\,) \, = \, F_0(\,\cdot\,) \, .
$$
It is not hard to check that the relation $r\big(F(\rho, \theta^1,\ldots, \theta^{n-1})\big) = \rho$ must be satisfied, so that for every $\rho \in [-\delta, \delta]$, the image $\Sigma_\rho = F(\rho,\overline{B^{n-1}})$ belongs to the level set $\{r = \rho \}$ of the signed distance. 
\smallskip
\item Define the cylindrical neighbourhood $C_\delta$ of $p$ simply as $F\big( [-\delta, \delta] \times \overline{B^{n-1}} \big)$. By construction, the map $F$ is a parametrisation of $C_\delta$. Moreover, still denoting by $g$ the metric pulled-back from $M$ through the map $F$, we have that 
\begin{equation*}
\go=d\rho\otimes d\rho\,+\,g_{ij}(\rho,\theta^1,\dots,\theta^{n-1})d\theta^i\otimes d\theta^j\,,
\end{equation*}
where the $g_{ij}$'s are smooth functions of the coordinates $(\rho, \theta^1,\ldots, \theta^{n-1})$ of $[-\delta, \delta] \times \overline{B^{n-1}}$. 
In particular, for any fixed $0<\ep<1$, we can suppose that, up to diminishing the value of $\delta>0$, the following estimates hold true
\begin{equation}
\label{eq:estimate_on_go}
(1-\ep)^2\,g_{ij}(0,\theta) \, \leq \,g_{ij}(\rho,\theta) \, \leq \, (1+\ep)^2\,g_{ij}(0,\theta)\,,
\end{equation} 
for every $\theta=(\theta^1,\dots,\theta^{n-1})\in \overline{B^{n-1}}$ and every $\rho\in[-\delta,\delta]$.
\smallskip
\item Finally, let $
u_\delta\,=\,\max_{\Sigma_\delta}u$. It follows from the construction that $u_\delta < \umax$. For $u_\delta \leq t \leq \umax$, we are going to consider the (pulled-back) level sets of $u$ given by
$$
L_t \,\, = \,\, F^{-1} \big( \{u=t \} \cap \overline{N} \big) \,\,\subset \,\, [0, \delta] \times \overline{B^{n-1}}  \, ,
$$
together with their natural projection on $L_{\umax} = \{0\}\times\overline{B^{n-1}}$. These are defined by
$$
\pi_t \, : \,L_t\,\longrightarrow \, \{0\}\times\overline{B^{n-1}}  \,,\qquad \pi_t\,: \,(\rho,\theta)\,\longmapsto\,(0,\theta) \, .
$$
It is not hard to see that for $u_\delta \leq t \leq \umax$, the projection $\pi_t$ is surjective. This follows from the fact that for any given $\theta \in \overline{B^{n-1}}$ the assignment 
$$
[0,\delta] \ni \rho \longmapsto (u \circ F) (\rho, \theta)
$$
is continuous and its range contains the closed interval $[u_\delta, \umax]$.
\end{itemize}
\begin{figure}
\centering
\includegraphics[scale=1]{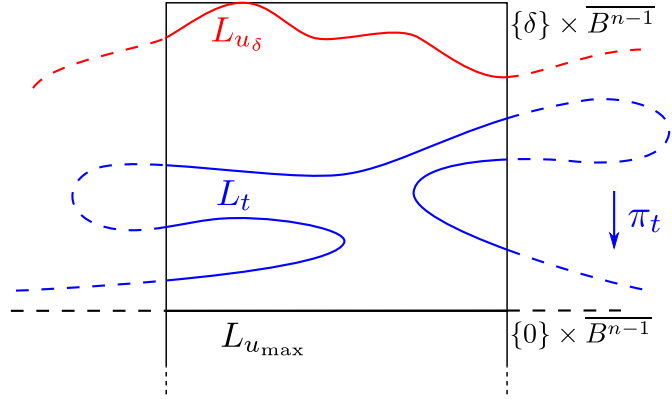}
\caption{\small
A section of the cylinder $[-\delta,\delta]\times\overline{B^{n-1}}$. The arrow shows the action of the function $\pi_t$, that sends the points of $L_t$ to their projection on $L_{\umax}=\{0\}\times\overline{B^{n-1}}$.
}
\end{figure}
With the notations introduced above, we claim that for every $u_\delta \leq t \leq \umax$ and every connected open set $S \subset L_t$, we have 
\begin{equation}
\label{eq:claim_aux}
{\rm diam}_{\go}(S)\,\geq\,({1-\ep})\,\,{\rm diam}_{\go}(\pi_t(S))\,.
\end{equation}
Since we have already shown that $\pi_t$ is surjective, it follows from the very definition of the Hausdorff measure that the claim implies the inequality
$$
\mathscr{H}^{n-1} \big(L_t\big)\,\geq\,{(1-\ep)^{n-1}}\,\,\mathscr{H}^{n-1} \big(L_{\umax} \big)\,,
$$ 
which is clearly equivalent to~\eqref{eq:thesis_estimate_D}.
To prove~\eqref{eq:claim_aux}, let us fix $t\in[u_\delta,\umax]$ and consider a $\mathscr{C}^1$ curve
$$
\gamma\,:\, I\longrightarrow L_t\,,\qquad s\longmapsto \gamma(s)\,=\,\big(\rho(s),\theta(s)\big)\,,
$$
where $I\subset\R$ is an interval.
We want to show that the lenght of $\gamma$ is controlled from below by the lenght of its projection $\pi_t\circ\gamma$, which is the curve on $L_{\umax}$ defined by $(\pi_t\circ\gamma)(s)=(0,\theta(s))$ for every $s\in I$.
Recalling the expression of $\go$ with respect to the coordinates $(\rho,\theta)$ and the estimate~\eqref{eq:estimate_on_go}, we compute
\begin{align*}
\left|\frac{d\gamma}{ds}\right|_g^2\!\!(s)\,\,&=\,\,\left|\frac{d\rho}{ds}\right|_g^2\!\!(s)\,+\,g_{ij}\big(\rho(s),\theta(s)\big)\,\frac{d\theta^i}{ds}(s)\,\frac{d\theta^j}{ds}(s)
\\
&\geq\,\,(1-\ep)^2\,g_{ij}\big(0,\theta(s)\big)\,\frac{d\theta^i}{ds}(s)\,\frac{d\theta^j}{ds}(s)
\\
&=\,\,(1-\ep)^2\,\left|\frac{d(\pi_t\circ\gamma)}{ds}\right|_g^2\!\!(s)\,.
\end{align*}
In particular, the same inequality holds between the lenghts of $\gamma$ and its projection $\pi_t\circ\gamma$. Claim~\eqref{eq:claim_aux} follows. 
\end{proof}

\noindent 
Combining Propositions~\ref{pro:main_D} and~\ref{pro:estimate_D} we easily obtain Theorem~\ref{thm:main_D}, that we restate here -- in an alternative form -- for the ease of reference.

\begin{theorem}
\label{thm:main_D_dritto}
Let $(M,\go,u)$ be a solution to problem~\eqref{eq:pb_D}, let $N$ be a 
connected component of $M\setminus{\rm MAX}(u)$, and let $\pa N = \pa M \cap N$ be the boundary portion of $\pa M$ that lies in $N$. Suppose that
\begin{equation*}
\frac{|\D u|}{\umax} \,\, \leq \,\, 1  \qquad \hbox{on} \quad \pa N \, .
\end{equation*} 
Then, up to a normalization of $u$, the triple $(M,\go,u)$ is isometric to the de Sitter solution~\eqref{eq:D}. In particular $\pa M$ and $M\setminus {\rm MAX}(u)$ are both connected.
\end{theorem}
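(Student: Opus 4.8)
The plan is to combine the monotonicity of the auxiliary function $U$ with its blow-up behaviour near the top level of $u$, and then to reduce the equality case to the Obata-type argument already used for Theorem~\ref{thm:shen_D_dritto}. First, by the No Islands Lemma~\ref{lem:noisole} the boundary portion $\pa N=\pa M\cap N$ is non-empty, so the hypothesis $|\D u|/\umax\leq 1$ on $\pa N$ is meaningful; by Lemma~\ref{le:shen_aux_D} it upgrades to the pointwise bound $|\D u|^2+u^2\leq\umax^2$ on the whole of $N$, and by Proposition~\ref{pro:main_D} it makes the function $U(t)$ of~\eqref{eq:Up_D} monotonically non-increasing on $[0,\umax)$. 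Since $\pa N$ is a compact hypersurface on which $|\D u|$ is continuous, $U(0)=\umax^{-n}\int_{\pa N}|\D u|\,\rmd\sigma<+\infty$, and therefore $U(t)\leq U(0)$ for every $t\in[0,\umax)$.

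The second step is to prove that $\mathscr{H}^{n-1}\big({\rm MAX}(u)\cap\overline{N}\big)=0$. If this quantity were positive, Proposition~\ref{pro:estimate_D} would force $\lim_{t\to\umax^-}U(t)=+\infty$, contradicting the uniform bound $U\leq U(0)$ just obtained. Since $u$ is analytic, the vanishing of this Hausdorff measure says that the inner boundary ${\rm MAX}(u)\cap\overline{N}=\overline{N}\setminus N$ is an analytic subset of dimension at most $n-2$, hence of codimension at least two in $M$.

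The decisive step is then to promote this to the rigidity $|\D u|^2+u^2\equiv\umax^2$ on $N$. Write $\Phi=|\D u|^2+u^2$; by~\eqref{eq:div_BGH_rev_D} one has $\De\Phi-\tfrac{1}{u}\langle\D u\,|\,\D\Phi\rangle\geq 0$ on $N$, with the operator uniformly elliptic near ${\rm MAX}(u)$ because there $u$ stays close to $\umax$, while $\Phi\leq\umax^2$ on $N$ and $\Phi=\umax^2$ exactly on the thin set ${\rm MAX}(u)\cap\overline{N}$. If $\Phi$ reached the value $\umax^2$ at some point of $N$, the Strong Maximum Principle would immediately give $\Phi\equiv\umax^2$ on $N$; otherwise $\Phi<\umax^2$ throughout $N$, and one applies Hopf's boundary-point lemma at a point $p\in{\rm MAX}(u)\cap\overline{N}$ at which $N$ satisfies an interior ball condition --- such $p$ exist precisely because ${\rm MAX}(u)\cap\overline{N}$ has codimension at least two, so a small geodesic ball tangent to this set can be fitted inside $N$. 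Since $\Phi\in\mathscr{C}^\infty(M)$ and $\D u(p)=0$ forces $\D\Phi(p)=0$, the conclusion of the Hopf lemma is violated, so necessarily $\Phi\equiv\umax^2$ on $N$. Substituting this into~\eqref{eq:div_BGH_rev_D} forces $|\DD u|^2=(\De u)^2/n$, hence $\DD u=-u\,\go$ and then $\Ric=(n-1)\,\go$ on $N$ by the first equation in~\eqref{eq:pb_D}; by analyticity these identities hold on all of $M$. From here the proof concludes exactly as in Theorem~\ref{thm:shen_D_dritto}: doubling $M$ across the totally geodesic boundary $\pa M$ produces a closed Einstein manifold carrying a $\mathscr{C}^2$ Laplacian eigenfunction of eigenvalue $n$, Obata's theorem~\cite{Obata} identifies it with the round sphere, and hence $(M,\go,u)$ is the de Sitter solution~\eqref{eq:D}; the connectedness of $\pa M$ and of $M\setminus{\rm MAX}(u)$ is then automatic, since these sets are connected for the de Sitter solution.

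I expect the main obstacle to be precisely this decisive step. A priori ${\rm MAX}(u)\cap\overline{N}$ is only a closed set of codimension at least two and may well be singular, so neither the Strong Maximum Principle nor Hopf's lemma is available off the shelf: one must locate a point of this set at which an interior ball genuinely fits inside $N$ --- which requires a careful use of the analytic, in particular stratified, structure of the maximal set --- and then verify that the elliptic inequality for $\Phi$ really holds up to that point.
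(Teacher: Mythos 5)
Your proposal coincides with the paper's proof up to the conclusion $\mathscr{H}^{n-1}\big({\rm MAX}(u)\cap\overline{N}\big)=0$, but then takes a genuinely different final step. The paper's last step is purely topological: a closed set of vanishing $\mathscr{H}^{n-1}$--measure cannot disconnect the manifold, so $N$ is the \emph{only} component of $M\setminus{\rm MAX}(u)$, hence $\pa N=\pa M$ and the global Theorem~\ref{thm:shen_D_dritto} applies verbatim. You instead re-derive the rigidity $\Phi:=|\D u|^2+u^2\equiv\umax^2$ directly on $N$ via Hopf's lemma at a frontier point. This route does work, but two remarks are in order. First, the obstacle you flag at the end is not really there, and the codimension-two property is not the reason it disappears: for \emph{any} open set, pick $q\in N$ near the frontier and take the largest geodesic ball $B(q,r)\subset N$; its closure must touch $\overline{N}\setminus N\subset{\rm MAX}(u)$ at some point $p$, which automatically satisfies the interior ball condition. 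Since ${\rm MAX}(u)$ stays away from $\pa M$, the drift coefficient $1/u$ is bounded near $p$, one has $\D\Phi(p)=0$ because $\D u(p)=0$, and Hopf's lemma applied on $B(q,r)$ gives the contradiction. (A small repair: run the dichotomy on the ball $B(q,r)$ rather than on all of $N$ --- if $\Phi$ attained $\umax^2$ only at a point of $\pa N\subset\pa M$, your strong maximum principle branch would not be directly available there, since $1/u$ blows up; on $B(q,r)$ both branches are clean and either one yields $\Phi\equiv\umax^2$ on $N\setminus\pa M$, hence on $N$ by continuity.) Second, once the Hopf argument is accepted it uses only Lemma~\ref{le:shen_aux_D}: the monotonicity of $U$ and the blow-up estimate of Proposition~\ref{pro:estimate_D}, which you invoke only to obtain the codimension bound, become logically redundant in your own argument, so your proof is in fact shorter than the paper's. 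The authors' detour through the function $U$ of~\eqref{eq:Up_D} is what generalises to the sharper area bounds announced for the sequel; for the present statement, either your (repaired) Hopf argument or the paper's topological reduction to Theorem~\ref{thm:shen_D_dritto} suffices.
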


\begin{proof}
Let us consider the function $t \mapsto U(t)$ defined in~\eqref{eq:Up_D}. Thanks to the assumption $|\D u|\leq \umax$ on $\pa N$, we have that Proposition~\ref{pro:main_D} is in force, and thus $t \mapsto U(t)$ is monotonically nonincreasing. In particular, we get	$$
	\lim_{t\to \umax^-}U(t)\,\leq\, U(0)\,=\,\int\limits_{\pa N}|\D u|\,\rmd\sigma\,\leq \umax\,|\pa N|\,<\,\infty\,.
	$$
In light of Proposition~\ref{pro:estimate_D}, this fact tells us that $\mathscr{H}^{n-1}\big({\rm MAX}(u)\cap \overline{N}\big)= 0$. This means that ${\rm MAX}(u)\cap \overline{N}$ cannot disconnect the domain $N$ from the rest of the manifold $M$. In other words, $N$ is the only connected component of $M\setminus{\rm MAX}(u)$. In particular particular, $\pa M\cap N=\pa M$ and Theorem~\ref{thm:shen_D} applies, giving the thesis.
\end{proof}

\section{A characterization of the Anti de Sitter spacetime}

\noindent In this section we focus on the case $\Lambda<0$ and, proceeding in analogy with Section~\ref{sec:monoform}, we prove  Theorem~\ref{thm:main_A}.

\subsection{Some preliminary results.}

\noindent Here we prove the analogues of Lemmata~\ref{lem:noisole} and~\ref{le:shen_aux_D}. 
For a connected component $N$ of $M\setminus {\rm MIN}(u)$, we will denote by $\overline{N}$ the closure of $N$ in $M$. Notice that $\overline{N}$ is a manifold with boundary $\pa\overline{N}={\rm MIN}(u)\cap\overline{N}$. 
 Since ${\rm MIN}(u)$ might be singular, the boundary $\pa\overline{N}$ is not necessarily smooth in general. 
 Another important feature of $\overline{N}$ 
is that it must be noncompact, as we are going to show in the following lemma.
\begin{lemma}[No Islands Lemma, $\Lambda<0$]
\label{lem:noisole_A}
Let $(M,\go,u)$ be a solution to problem~\eqref{eq:pb_A} and let $N$ be a 
connected component of $M\setminus{\rm MIN}(u)$. Then $\overline{N}$ has at least one end.
\end{lemma}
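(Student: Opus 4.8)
The plan is to argue by contradiction: assuming $\overline{N}$ is compact, I would derive a contradiction from the maximum principle, in the same spirit as the proof of Lemma~\ref{lem:noisole}. The first step is a purely topological remark: since $N$ is a connected component of the open set $M\setminus{\rm MIN}(u)$, one has $\overline{N}\setminus N\subseteq{\rm MIN}(u)$. Indeed, if a point $p\in\overline{N}\setminus N$ did not lie in ${\rm MIN}(u)$, then a small connected neighbourhood $V$ of $p$ would be contained in $M\setminus{\rm MIN}(u)$ and would meet $N$, forcing $V\subseteq N$ and hence $p\in N$, a contradiction. In particular $\pa\overline{N}=\overline{N}\setminus N\subseteq{\rm MIN}(u)$, so $u\equiv\umin$ on $\pa\overline{N}$.

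Next I would use that the scalar equation in~\eqref{eq:pb_A} together with $u>0$ give $\De u=nu>0$ on $M$, i.e.\ $u$ is strictly subharmonic, so that it admits no interior local maximum. Distinguish two cases. If $\pa\overline{N}=\emptyset$, then $N=\overline{N}$ is a compact manifold without boundary, and integrating the identity $\De u=nu$ over it gives $0=\int_{\overline{N}}\De u\,\rmd\mu=n\int_{\overline{N}}u\,\rmd\mu>0$, which is absurd. If instead $\pa\overline{N}\neq\emptyset$, then $u$ is continuous on the compact set $\overline{N}$ and attains its maximum there at some point $q$; since $u$ has no interior maximum, $q\in\pa\overline{N}\subseteq{\rm MIN}(u)$, so that $\max_{\overline{N}}u=\umin$, and combined with $u\geq\umin$ this forces $u\equiv\umin$ on $\overline{N}$, in contradiction with the fact that $u>\umin$ on the non-empty open set $N$.

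Hence $\overline{N}$ cannot be compact; being connected and non-compact (with possibly singular boundary ${\rm MIN}(u)\cap\overline{N}$), it has at least one end, which is the claim. I do not expect a genuine obstacle here. The only point deserving some care is that $\pa\overline{N}$ need not be a smooth hypersurface, so the maximum principle must be used in the form that invokes only continuity of $u$ up to the boundary together with strict subharmonicity in the interior --- the form that simply forbids interior maxima --- rather than any boundary-regularity-dependent refinement (such as the Hopf lemma). The strict inequality $\De u=nu>0$, a direct consequence of $u>0$, is precisely what makes this step clean.
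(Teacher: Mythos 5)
Your proof is correct and follows essentially the same route as the paper's: assume $\overline{N}$ compact, observe that $\overline{N}\setminus N\subseteq{\rm MIN}(u)$, and use the subharmonicity $\De u=nu\geq 0$ with the maximum principle to force $u\equiv\umin$ on $\overline{N}$, which is absurd. The only (harmless) differences are that you close the contradiction directly via $u>\umin$ on $N$ instead of invoking analyticity, and you treat the degenerate case $\overline{N}\setminus N=\emptyset$ separately by integrating $\De u=nu>0$ over the closed manifold.
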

\begin{proof} 
Let $N$ be a connected component of $M \setminus {\rm MIN}(u)$ and assume by contradiction that $\overline{N}$ has no ends. In particular, $\overline{N}$ is compact, and since also ${\rm MIN}(u)$ is compact, one has that $\overline{N} \setminus N \subseteq {\rm MIN}(u)$. On the other hand, from~\eqref{eq:pb_A} we have $\Delta u \geq 0$ in $N$, hence, by the Weak Maximum Principle, one obtains
\begin{equation*}
\max_{\overline N} u \,\, = \,\, \max_{\overline{N} \setminus N} u \,\, \leq \,\, \max_{{\rm MIN}(u)} u \,\, = \,\, \umin \, .
\end{equation*}
This implies that $u \equiv \umin$ on $N$. Since $N$ has non-empty interior, $u$ must be constant on the whole $M$, by analyticity. This yields the desired contradiction.
\end{proof}

\noindent A similar application of the Maximum Principle leads to the following result, which is the analogue of Lemma~\ref{le:shen_aux_D} in the case $\Lambda<0$.

\begin{lemma}
	\label{le:shen_aux_A}
Let $(M,\go,u)$ be a solution to problem~\eqref{eq:pb_A}, and let $N$ be a connected component of $M\setminus{\rm MIN}(u)$.  If 
$$
\liminf_{x\in N,\,x\to\infty}\left(u^2-\umin^2-|\D u|^2\right)\geq 0\,,
$$
then it holds $|\D u|^2\leq u^2-\umin^2$ on the whole $N$.
\end{lemma}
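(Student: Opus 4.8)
The plan is to run the same Maximum Principle scheme used in the proof of Theorem~\ref{thm:shen_A_dritto}, but localized to the connected component $N$. The starting point is the differential inequality~\eqref{eq:elliptic_A}, a pointwise consequence of Shen's Identity~\eqref{eq:div_BGH_AD} which holds on all of $M$: setting
\[
v \,:=\, |\D u|^2 - u^2 + \umin^2 ,
\]
it reads $\De v - \tfrac{1}{u}\langle \D u \,|\, \D v\rangle \ge 0$. Since $u>0$ on $M$ and $u\to+\infty$ at infinity, we have $\umin=\min_M u>0$, so the coefficient $1/u$ is bounded above by $1/\umin$; thus $v$ is a subsolution of a linear elliptic operator with no zeroth order term and bounded coefficients. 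This is precisely the structure exploited in Lemma~\ref{le:shen_aux_D}, the only difference being that $M$ is now complete and noncompact and that the relevant domain $N$ has, in general, a nonsmooth boundary.

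Next I would record the behaviour of $v$ on the two ``boundary pieces'' of $\overline{N}$. On $\overline{N}\setminus N\subseteq{\rm MIN}(u)$ the function $u$ attains its global minimum at interior points of $M$, hence $\D u=0$ and $u=\umin$ there, so that $v\equiv 0$ on $\overline{N}\setminus N$. Along the ends of $N$ --- which exist by the No Island Lemma~\ref{lem:noisole_A} --- the hypothesis $\liminf_{x\in N,\,x\to\infty}(u^2-\umin^2-|\D u|^2)\ge 0$ is equivalent to $\limsup_{x\in N,\,x\to\infty}v(x)\le 0$, that is: for every $\ep>0$ there is a compact set $K_\ep\subset M$ with $v<\ep$ on $N\setminus K_\ep$.

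Then I would conclude by the Weak Maximum Principle applied on the compact sets $D_j:=K_j\cap\overline{N}$, where $\{K_j\}$ is an exhaustion of $M$ by compact sets with $K_j\subset\mathrm{int}(K_{j+1})$. The topological boundary of $D_j$ is contained in $(\pa K_j\cap\overline{N})\cup(\overline{N}\setminus N)$; on the second piece $v\equiv 0$, and for $j$ large enough that $\pa K_j$ lies outside $K_\ep$ one has $v<\ep$ on the first piece. Hence $\max_{D_j}v\le\ep$, and letting $j\to\infty$ gives $v\le\ep$ on $\overline{N}$; since $\ep>0$ is arbitrary, $v\le 0$ on $N$, which is exactly $|\D u|^2\le u^2-\umin^2$ on $N$. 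The only mildly delicate point --- the one I expect to need a sentence of care --- is that ${\rm MIN}(u)$ may be singular, so $D_j$ is not a smooth domain; this causes no trouble, since the Weak Maximum Principle for subsolutions only requires $D_j$ compact and $v$ continuous, and the value of $v$ on the (possibly singular) boundary portion $\overline{N}\setminus N$ is the constant $0$ regardless of its regularity. (The strong form of the rigidity is not needed here, only the inequality, so the weak principle suffices.)
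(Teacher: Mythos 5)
Your argument is correct and is essentially the paper's own proof: both hinge on applying the Weak Maximum Principle to the subsolution $v=|\D u|^2-u^2+\umin^2$ of the elliptic inequality~\eqref{eq:elliptic_A} on compact truncations of $N$, with the boundary values controlled by the hypothesis at infinity and by the fact that $v\equiv 0$ on ${\rm MIN}(u)$. The only (harmless) difference is cosmetic: the paper truncates using regular level sets $N\cap\{\umin+\ep\le u\le 1/\ep\}$ and lets $\ep\to 0$, whereas you keep the possibly singular portion $\overline{N}\setminus N\subseteq{\rm MIN}(u)$ inside the compact domain and observe, correctly, that the weak principle needs no boundary regularity there.
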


\begin{proof}
We recall from Subsection~\ref{sub:prelim} that the function $u$ is analytic and its critical level sets are discrete. It follows that there exists $\eta>0$ such that the level sets $\{u=\umin+\ep \}$ and $\{u=1/\ep\}$ are regular for any $0<\ep\leq\eta$.
For any $0<\ep\leq\eta$, let $N_\ep=N\cap \{\umin+\ep\leq u\leq 1/\ep\}$. We have $|\D u|^2-u^2=-\umin^2$ on ${\rm MIN}(u)$ and from the hypotesis 
$$
\limsup_{x\in N,x\to\infty}(|\D u|^2-u^2+\umin^2)\leq 0\,.
$$ 
In particular, for any $\delta>0$, there exists $\ep>0$ small enough so that $|\D u|^2-u^2+\umin^2\leq \delta$ on $\{u=1/\ep\}$. In fact, if this were not the case, it would exist a sequence $\{\ep_i\}_{i\in\N}$ of positive real numbers converging to zero such that for every $i\in\N$ there exists $p_i\in\{u=1/\ep_i\}$ with $(|\D u|^2-u^2+\umin^2)(p_i)> \delta$, and the superior limit of this sequence would be greater than $\delta$, in contradiction with the hypothesis.
We have thus proved that
\begin{equation}
\label{eq:shen_aux_aux_A}
\lim_{\ep\to 0^+}\max_{\pa N_\ep}(|\D u|^2-u^2+\umin^2)\,\leq\, 0\,.
\end{equation}
On the other hand, we can apply the Maximum Principle to~\eqref{eq:elliptic_A} inside $N_\ep$ for an arbitrarily small $\ep>0$, and using~\eqref{eq:shen_aux_aux_A} we find
$$
\max_{N}(|\D u|^2-u^2)\,=\,\lim_{\ep\to 0^+}\max_{N_\ep}(|\D u|^2-u^2)\,=\,\lim_{\ep\to 0^+}\max_{\pa N_\ep}(|\D u|^2-u^2)\,\leq\,-\umin^2\,.
$$
The thesis follows.
\end{proof}

\subsection{Proof of Theorem~\ref{thm:main_A}.}

The strategy of the proof of Theorem~\ref{thm:main_A} is completely analogue to the one employed in Section~\ref{sec:monoform} for the proof of~Theorem~\ref{thm:main_D}. For this reason, we will avoid to give some details, that can be easily recovered by the interested reader. First of all, we introduce the function $U:(\umin,+\infty)\rightarrow\R$ defined as
\begin{equation}
\label{eq:Up_A}
t \,\, \longmapsto \,\, U(t) \, = \, \Big(\frac{1}{t^2-\umin^2}\Big)^{\!\!\frac{n}{2}}\!\!\!\!\!\! \int\limits_{ \{ u = t \}\cap N} \!\!\!\!  |\D u| \, \rmd \sigma .
\end{equation}
Reasoning as in Subsection~\ref{sub:main_proof_D}, one sees that the function $U$ is well defined and
constant on the Anti de Sitter solution. Furthermore, now we prove that $U$ is always nondecreasing in $t$.

\begin{proposition}[Monotonicity, case $\Lambda<0$]
\label{pro:main_A}
Let $(M,\go,u)$ be a solution to problem~\eqref{eq:pb_A}. Let $N$ be a connected component of $M\setminus{\rm MIN}(u)$ and let $U$ be the function defined as in~\eqref{eq:Up_A}. If  
$$\liminf_{x\in N,\,x\to \infty}\left(u^2-\umin^2-|\D u|^2\right)\geq 0\,,
$$
then the function $U$ is monotonically nondecreasing.
\end{proposition}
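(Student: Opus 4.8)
The plan is to transcribe, with the obvious sign changes, the argument that proves Proposition~\ref{pro:main_D}. The first step is a divergence identity: using the scalar equation $\De u=nu$ from~\eqref{eq:pb_A} and differentiating, one computes on $N$
\begin{equation*}
{\rm div}\left[\frac{\D u}{(u^2-\umin^2)^{\frac{n}{2}}}\right]\,=\,\frac{n\,u}{(u^2-\umin^2)^{\frac{n}{2}+1}}\,\big(u^2-\umin^2-|\D u|^2\big)\,\geq\,0\,,
\end{equation*}
where the inequality uses that $u>\umin>0$ on $N$ together with the gradient estimate $|\D u|^2\leq u^2-\umin^2$ on $N$, which is exactly the content of Lemma~\ref{le:shen_aux_A} under the hypothesis of the proposition.

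Next I would fix $\umin<t_1<t_2$ (which, since $u$ is analytic and its critical values are discrete, may be taken to be regular values of $u$) and integrate the displayed inequality over the region $\{t_1\leq u\leq t_2\}\cap N$. The one point that genuinely differs from the $\Lambda>0$ case is that $M$ is now non-compact, so one must first check that this region is compact: properness of $u$ — which is precisely what $u(x)\to+\infty$ as $x\to\infty$ asserts — makes $\{u\leq t_2\}$ compact, while the choice $t_1>\umin$ keeps the region away from the (possibly singular) set ${\rm MIN}(u)$, so $\{t_1\leq u\leq t_2\}\cap N$ is a compact domain whose topological boundary is the smooth hypersurface $(\{u=t_1\}\cup\{u=t_2\})\cap N$. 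The outer $\go$-unit normal is ${\rm n}=-\D u/|\D u|$ along $\{u=t_1\}$ and ${\rm n}=\D u/|\D u|$ along $\{u=t_2\}$, so the Divergence Theorem yields
\begin{equation*}
\int\limits_{\{u=t_2\}\cap N}\!\!\frac{|\D u|}{(u^2-\umin^2)^{\frac{n}{2}}}\,\rmd\sigma\,\,\,\geq \!\!\!\int\limits_{\{u=t_1\}\cap N}\!\!\frac{|\D u|}{(u^2-\umin^2)^{\frac{n}{2}}}\,\rmd\sigma\,,
\end{equation*}
and since $u^2-\umin^2$ is constant on each level set this reads $U(t_2)\geq U(t_1)$; the general case $\umin<t_1<t_2$ then follows by the continuity of $U$, since the regular values are dense.

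I do not expect a serious obstacle here: the analytic heart of the statement — the sign of the divergence — is entirely packaged in Lemma~\ref{le:shen_aux_A}, and the remainder is the same manipulation as in Proposition~\ref{pro:main_D}. The only item deserving a little care, and the one I would spell out, is the compactness of the slab $\{t_1\leq u\leq t_2\}\cap N$, which is where the properness of $u$ together with the choice $t_1>\umin$ is used to license the integration by parts on a non-compact manifold.
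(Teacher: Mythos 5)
Your proposal is correct and follows essentially the same route as the paper: the same divergence identity ${\rm div}\big[\D u\,(u^2-\umin^2)^{-n/2}\big]\geq 0$ obtained from $\De u = nu$ and Lemma~\ref{le:shen_aux_A}, followed by the same integration by parts over $\{t_1\leq u\leq t_2\}\cap N$. The extra care you take about compactness of the slab (via properness of $u$ and $t_1>\umin$) and about regular values is a reasonable refinement of details the paper leaves implicit, not a different argument.
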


\begin{proof}
Recalling $\De u=nu$, we easily compute
\begin{align}
\notag
{\rm div}\left[\frac{\D u}{(u^2-\umin^2)^{\frac{n}{2}}}\right]\,&=\,\frac{\De u}{(u^2-\umin^2)^{\frac{n}{2}}}-n\,u\,\frac{|\D u|^2}{(u^2-\umin^2)^{\frac{n}{2}+1}}
\\
\label{eq:div_Du_A}
\,&=\,\frac{n\,u}{(u^2-\umin^2)^{\frac{n}{2}+1}}\left(u^2-\umin^2-|\D u|^2\right)\,\geq\,0\,,
\end{align}
where the last inequality follows from Lemma~\ref{le:shen_aux_A}. Integrating by parts inequality~\eqref{eq:div_Du_A} in $\{t_1\leq u\leq t_2\}\cap N$ for some $t_1<t_2$, and applying the Divergence Theorem, we deduce
\begin{multline}
\label{eq:int_div_Du_A}
\int\limits_{\{u=t_1\}\cap N}\!\!\!\!\bigg\langle\frac{\D u}{(u^2-\umin^2)^{\frac{n}{2}}}\,\bigg|\,{\rm n}\bigg\rangle\,\rmd\sigma
\,\,\,+
\!\!\!\!\!\int\limits_{\{u=t_2\}\cap N}\!\!\!\!\bigg\langle\frac{\D u}{(u^2-\umin^2)^{\frac{n}{2}}}\,\bigg|\,{\rm n}\bigg\rangle\,\rmd\sigma
\,=\,
\\
=\!\!\!\!\!\int\limits_{\{t_1\leq u\leq  t_2\}\cap N}\frac{n\,u}{(u^2-\umin^2)^{\frac{n}{2}+1}}\left(u^2-\umin^2-|\D u|^2\right)\,\,\geq\,\,0\,,
\end{multline}
where ${\rm n}$ is the outer $\go$-unit normal to the set $\{t_1\leq u\leq t_2\}$. In particular, one has ${\rm n}=-\D u/|\D u|$ on $\{u=t_1\}$ and ${\rm n}=\D u/|\D u|$ on $\{u=t_2\}$. Therefore, formula~\eqref{eq:int_div_Du_A} rewrites as
$$
\int\limits_{\{u=t_2\}\cap N}\!\!\frac{|\D u|}{(u^2-\umin^2)^{\frac{n}{2}}}\,\rmd\sigma\,\,\,\geq \!\!\!\!\int\limits_{\{u=t_1\}\cap N}\!\!\frac{|\D u|}{(u^2-\umin^2)^{\frac{n}{2}}}\,\rmd\sigma
\,,
$$
which implies $U(t_2)\geq U(t_1)$, as wished.
\end{proof}

\noindent Combining Theorem~\ref{thm:main_A} with some approximations near the extremal points of the static potential $u$, we are able to characterize the set ${\rm MIN}(u)$ and to estimate the behavior of the $U(t)$'s as $t$ approaches $\umin$.

\begin{proposition}
\label{pro:estimate_A}
Let $(M,\go,u)$ be a solution to problem \eqref{eq:pb_A}.
Let $N$ be a connected component of $M\setminus{\rm MIN}(u)$ and let $U$ be the function defined by~\eqref{eq:Up_A}. If $\mathscr{H}^{n-1}\big({\rm MIN}(u)\cap \overline{N}\big)> 0$, then $\lim_{t\to \umin^+}U(t)=+\infty$. 
\end{proposition}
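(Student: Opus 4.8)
The plan is to reproduce, \emph{mutatis mutandis}, the argument proving Proposition~\ref{pro:estimate_D}, with the roles of $\umax$ and ${\rm MAX}(u)$ played by $\umin$ and ${\rm MIN}(u)$, and with the quantity $\umax-u$ replaced throughout by $u-\umin$. The only structural novelty is that here $M$ is noncompact, so at a couple of places one must invoke the properness of $u$ (which holds since $u(x)\to+\infty$ as $x\to\infty$): in particular ${\rm MIN}(u)=\{u=\umin\}$ is compact, and each slice $\{u=t\}\cap N$ is compact and, for $t$ close enough to $\umin$, contained in any prescribed neighbourhood of ${\rm MIN}(u)$.

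First I would apply the {\L}ojasiewicz inequality (see~\cite{Lojasiewicz_1} or~\cite{Kur_Par}) at the minimum points of the analytic function $u$: for every $p\in{\rm MIN}(u)$ there exist a neighbourhood $V_p$, a constant $c_p>0$ and an exponent $0<\theta_p<1$ such that $|\D u|(x)\geq c_p\,[u(x)-\umin]^{\theta_p}$ on $V_p$; after shrinking $V_p$ so that $u-\umin<1$ there, this improves to $|\D u|\geq c_p\,(u-\umin)$. Covering the compact set ${\rm MIN}(u)$ by finitely many such neighbourhoods $V_{p_1},\dots,V_{p_k}$ and putting $c=\min\{c_{p_1},\dots,c_{p_k}\}$, one obtains a neighbourhood $V$ of ${\rm MIN}(u)$ on which $|\D u|\geq c\,(u-\umin)$. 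Next I would rewrite the function in~\eqref{eq:Up_A} as
\[
U(t)\,=\,\Big(\frac{1}{t^2-\umin^2}\Big)^{\!\!\frac{n-2}{2}}\!\!\!\int\limits_{\{u=t\}\cap N}\!\!\frac{|\D u|}{(u-\umin)(u+\umin)}\,\rmd\sigma\,,
\]
and, using that $\{u=t\}\cap N\subset V$ once $t$ is close to $\umin$, deduce the lower bound
\[
U(t)\,\geq\,\frac{c}{\,t+\umin\,}\,\Big(\frac{1}{t^2-\umin^2}\Big)^{\!\!\frac{n-2}{2}}\,\big|\{u=t\}\cap N\big|\,.
\]
As $t\to\umin^+$ the first factor is bounded below by $c/(2\umin)>0$ and the second one diverges, so it suffices to show that the areas $|\{u=t\}\cap N|$ stay bounded below by a positive constant as $t\to\umin^+$.

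To prove this I would run the cylindrical argument of the second half of the proof of Proposition~\ref{pro:estimate_D}. Since $u$ is analytic and $\mathscr{H}^{n-1}\big({\rm MIN}(u)\cap\overline{N}\big)>0$, the results of~\cite{Lojasiewicz_2} (see also~\cite[Theorem~6.3.3]{Kra_Par}) yield a smooth, nonempty, relatively open hypersurface $\Sigma\subset{\rm MIN}(u)\cap\overline{N}$ with $\mathscr{H}^{n-1}\big(({\rm MIN}(u)\cap\overline{N})\setminus\Sigma\big)=0$. Around a point of $\Sigma$ one introduces the signed distance $r$ to $\Sigma$ (positive towards $N$, negative away from it), smooth in a neighbourhood, and uses the flow of $\D r$ to build a cylindrical chart $C_\delta=F\big([-\delta,\delta]\times\overline{B^{n-1}}\big)$ in which $\go=d\rho\otimes d\rho+g_{ij}(\rho,\theta)\,d\theta^i\otimes d\theta^j$, with $g_{ij}(\rho,\theta)$ as close to $g_{ij}(0,\theta)$ as wished after shrinking $\delta$. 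For $t$ close to $\umin$ the projection $\pi_t\colon\{u=t\}\cap\overline{N}\cap C_\delta\to\{0\}\times\overline{B^{n-1}}$, $(\rho,\theta)\mapsto(0,\theta)$, is surjective (by continuity of $\rho\mapsto(u\circ F)(\rho,\theta)$ and the intermediate value theorem) and satisfies ${\rm diam}_\go(S)\geq(1-\ep)\,{\rm diam}_\go(\pi_t(S))$ for every connected open $S$ in the slice; hence
\[
\mathscr{H}^{n-1}\big(\{u=t\}\cap N\big)\,\geq\,(1-\ep)^{n-1}\,\mathscr{H}^{n-1}\big(\{0\}\times\overline{B^{n-1}}\big)\,>\,0
\]
for all such $t$, which is exactly the uniform lower bound required above.

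Since the whole scheme mirrors Proposition~\ref{pro:estimate_D}, no genuinely new difficulty arises; the point deserving care — and the one I would flag as the main (minor) obstacle — is the bookkeeping forced by the noncompactness of $M$, namely checking via the properness of $u$ that ${\rm MIN}(u)$ is compact (so the {\L}ojasiewicz subcover is finite) and that the slices $\{u=t\}\cap N$ are compact and eventually trapped inside both $V$ and the cylindrical chart $C_\delta$. Everything else transfers unchanged.
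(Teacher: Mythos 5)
Your proposal is correct and follows essentially the same route as the paper: a {\L}ojasiewicz gradient bound $|\D u|\geq c\,(u-\umin)$ near ${\rm MIN}(u)$, the rewriting of $U(t)$ to extract the divergent factor, and the cylindrical-chart argument giving a uniform positive lower bound on $\big|\{u=t\}\cap N\big|$. Your extra care about replacing compactness of $M$ by properness of $u$ (to get compactness of ${\rm MIN}(u)$ and of the slices) is exactly the right bookkeeping for the noncompact setting.
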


\begin{proof}
The proof is completely analogue to the proof of Propopsition~\ref{pro:estimate_D}. From the {\L}ojasiewicz inequality one deduces that there is a neighborhood $V$ of ${\rm MIN}(u)$ such that the inequality
\begin{equation}
\label{eq:loja_A}
|\D u|\,\geq\,c\,(u-\umin)
\end{equation}
holds on the whole $V$.
The second step is to rewrite $U(t)$ as
$$
U(t)
\,\,=\,\,
\bigg(\frac{1}{t^2-\umin^2}\bigg)^{\frac{n-2}{2}}\!\!\!\!\!\int\limits_{\{u=t\}\cap N}\!\! \bigg(\frac{|\D u|}{(u-\umin)(u+\umin)}\bigg) \,\rmd\sigma\,.
$$
Thanks to the compactness of $M$ and to the properness of $u$, for $t$ sufficiently close to $\umin$ we have $\{u=t\}\cap N\subset V$. For these values of $t$, using inequality~\eqref{eq:loja_A}, we have the following estimate
$$
U(t)
\,\,\geq\,\,\frac{c}{t+\umin}\,
\bigg(\frac{1}{t^2-\umin^2}\bigg)^{\frac{n-2}{2}}\!\!\!\!\!\cdot\,\,\big|\{u=t\}\cap N\big|\,.
$$
Proceeding exactly an in the proof of Proposition~\ref{pro:estimate_D}, one can show that
\begin{equation}
\label{eq:thesis_estimate_A}
\lim_{t\to \umin^+}\big|\{u=t\}\cap N\big| \, > \,0\,,
\end{equation}
and this concludes the proof.
\end{proof}

\noindent 
We are now in a position to prove Theorem~\ref{thm:main_A}, that we restate here, in an alternative -- but equivalent -- form, for reference.

\begin{theorem}
\label{thm:main_A_dritto}
Let $(M,\go,u)$ be a solution to problem~\eqref{eq:pb_A}, and let $N$ be a connected component of $M\setminus{\rm MIN}(u)$. 
Suppose that
	\begin{equation}
	\notag
	\liminf_{x\in N,\,x\to\infty}\left(u^2-\umin^2-|\D u|^2\right)(x)\,\,\geq\,\, 0\,.
	\end{equation}
Then, up to a normalization of $u$, the triple $(M,\go,u)$ is isometric to the Anti de Sitter solution~\eqref{eq:A}. In particular, $M\setminus{\rm MIN}(u)$ is connected and $M$ has a unique end.
\end{theorem}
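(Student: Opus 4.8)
The plan is to follow verbatim the three–step scheme used to derive Theorem~\ref{thm:main_D_dritto}: use the Monotonicity Formula of Proposition~\ref{pro:main_A} to bound the function $U$ of~\eqref{eq:Up_A} from above near $\umin$, then invoke Proposition~\ref{pro:estimate_A} to deduce that ${\rm MIN}(u)$ cannot carry positive $(n-1)$--dimensional Hausdorff measure inside $\overline{N}$, and finally apply the already proven Theorem~\ref{thm:shen_A_dritto}.

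First I would consider the function $U:(\umin,+\infty)\to\R$ defined in~\eqref{eq:Up_A}, which is well defined since $|\D u|$ is bounded and the level sets of the analytic function $u$ are compact with finite $\mathscr{H}^{n-1}$--measure. Under the standing hypothesis $\liminf_{x\in N,\,x\to\infty}(u^2-\umin^2-|\D u|^2)(x)\geq 0$, Proposition~\ref{pro:main_A} applies and gives that $U$ is monotonically nondecreasing. Since $U\geq 0$, the limit $\lim_{t\to\umin^+}U(t)$ exists, is finite, and satisfies $\lim_{t\to\umin^+}U(t)\leq U(t_0)<+\infty$ for any fixed $t_0\in(\umin,+\infty)$, the finiteness of $U(t_0)$ being clear because $\{u=t_0\}\cap N$ is a compact hypersurface of finite area on which $|\D u|$ is bounded.

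Next I would argue by contraposition using Proposition~\ref{pro:estimate_A}: since $\lim_{t\to\umin^+}U(t)<+\infty$, we must have $\mathscr{H}^{n-1}\big({\rm MIN}(u)\cap\overline{N}\big)=0$. As in the proof of Theorem~\ref{thm:main_D_dritto}, a set of vanishing $(n-1)$--dimensional Hausdorff measure cannot disconnect $N$ from the rest of $M$; since $\overline{N}\setminus N\subseteq{\rm MIN}(u)$, this forces $M\setminus{\rm MIN}(u)=N$, so $M\setminus{\rm MIN}(u)$ is connected. Because ${\rm MIN}(u)$ is compact, for any exhaustion $\{K_i\}$ and any sequence $x_i\in M\setminus K_i$ one has $x_i\in N$ for $i$ large; hence the hypothesis globalizes to $\liminf_{x\to\infty}(u^2-\umin^2-|\D u|^2)(x)\geq 0$, and Theorem~\ref{thm:shen_A_dritto} yields that $(M,\go,u)$ is isometric to the Anti de Sitter solution~\eqref{eq:A}, which in particular has a unique end.

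The step that requires real care — hence the main obstacle — is Proposition~\ref{pro:estimate_A}, namely the lower bound $\lim_{t\to\umin^+}U(t)=+\infty$ when ${\rm MIN}(u)\cap\overline{N}$ has positive $\mathscr{H}^{n-1}$--measure: it rests on the {\L}ojasiewicz gradient inequality near ${\rm MIN}(u)$, on the structure theory of analytic sets (so that ${\rm MIN}(u)\cap\overline{N}$ contains a smooth hypersurface up to an $\mathscr{H}^{n-1}$--null set), and on a cylindrical–coordinates comparison showing that the nearby level sets $\{u=t\}\cap N$ do not collapse in area as $t\to\umin^+$. The only genuine difference from the $\Lambda>0$ situation is that $\overline{N}$ is now non-compact, but since ${\rm MIN}(u)$ is compact and the whole argument is localized in a neighbourhood of it, this causes no difficulty; everything else is a word-for-word transposition of Subsection~\ref{sub:main_proof_D}, and once Propositions~\ref{pro:main_A} and~\ref{pro:estimate_A} are in force the deduction of Theorem~\ref{thm:main_A_dritto} is immediate.
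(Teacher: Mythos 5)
Your proposal is correct and follows essentially the same route as the paper: monotonicity of $U$ from Proposition~\ref{pro:main_A} gives $\lim_{t\to\umin^+}U(t)\leq U(t_0)<+\infty$, Proposition~\ref{pro:estimate_A} then forces $\mathscr{H}^{n-1}\big({\rm MIN}(u)\cap\overline{N}\big)=0$, so ${\rm MIN}(u)$ cannot disconnect $M$ and Theorem~\ref{thm:shen_A_dritto} applies. Your extra remark on why the hypothesis on $N$ globalizes to all of $M$ (compactness of ${\rm MIN}(u)$) is a welcome clarification of a step the paper leaves implicit.
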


\begin{proof}
On $N$, consider the function $U$ defined as in~\eqref{eq:Up_A} and fix $t_0\in (\umin,\infty)$. From Proposition~\ref{pro:main_A} we know that $U$ is nondecreasing, hence we have 
	$$
	\lim_{t\to \umin^+}U(t)\,\leq\,  U(t_0)\,=\,\Big(\frac{1}{t_0^2-\umin^2}\Big)^{\!\!\frac{n}{2}}\!\!\!\!\!\! \int\limits_{ \{ u = t_0 \}\cap N} \!\!\!\!  |\D u| \, \rmd \sigma\,<\,+\infty\,,
	$$
where in the latter inequality we have used the fact that $|\D u|$ is a continuous function and $\{u= t_0\}$ is compact (because $u$ is proper and $u\to +\infty$ at the infinity of $N$).
	Therefore, Proposition~\ref{pro:estimate_A} tells us that $\mathscr{H}^{n-1}\big({\rm MIN}(u)\cap \overline{N}\big)= 0$. This means that ${\rm MIN}(u)\cap \overline{N}$ cannot disconnect the manifold $M$, which in turn proves that $M\setminus{\rm MIN}(u)$ is connected. Therefore, we can apply Theorem~\ref{thm:shen_A} to deduce the thesis.
\end{proof}

\appendix

\section{Surface gravity}
\label{sec:surf_grav}

\subsection{Surface gravity of Killing horizons.}
\label{sub:surf_grav_KH} 
Let $(X,\gamma)$ be a $(n+1)$-dimensional vacuum spacetime, which means that $X$ is an $(n+1)$-dimensional manifold and $\gamma$ is a Lorentzian metric satisfying 
\begin{equation*}
\Ric_\gamma \,- \frac{\RRR_\gamma}{2} \, \gamma \,+ \Lambda \, \gamma\, = \, 0 \, , \quad \hbox{ in \,\, $X$} \, .
\end{equation*}
Suppose that there exists a Killing vector field $K$ on $X$, that is, a vector field such that $\mathcal{L}_K\gamma\,=\,0$ on the whole $X$.
A {\em Killing horizon} $\Sigma\subset X$ is a connected $n$-dimensional null hypersurface, invariant under the flow of $K$, such that 
$$
|K|^2_\gamma\,=\,0\, \ \hbox{ and }\ \, K\neq 0\,,\quad \hbox{on }\,\Sigma\,.
$$
Notice that $K$ is a null vector on $\Sigma$ by definition, and it is tangent to $\Sigma$ because of the invariance of $\Sigma$ under the flow of $K$.
Moreover, since $|K|^2_\gamma$ is constant on a Killing horizon $\Sigma$, the vector field $\na|K|^2_\gamma$, where $\na$ is the covariant derivative with respect to $\gamma$, is orthogonal to $\Sigma$. In particular it is also orthogonal to $K$. Since $\Sigma$ is a null hypersurface, it follows that $\na|K|^2_\gamma$ is a null vector. On the other hand, it is well known that two orthogonal null vectors are necessarily proportional to each other, and thus $\na|K|^2_\gamma$ must be proportional to $K$. In other words there exists a smooth function $\kappa \in \mathscr{C}^{\infty} (\Sigma)$ such that 
\begin{equation}
\label{eq:surf_grav}
2\,\kappa\,K_{|_{\Sigma}}\,=\,-(\na|K|^2_\gamma)_{|_{\Sigma}}\,.
\end{equation}
Using the basic properties of $K$ and $\Sigma$ (see for example~\cite{Bar_Car_Haw} or \cite[Theorem 7.1]{Heusler}), it is possible to prove that $\kappa$ is actually constant on $\Sigma$. {\em Once a normalization is chosen} for the Killing vector field $K$ in order to overcome the lack of scaling invariance of the above equation, it is usual to refer to the proportionality constant $\kappa \in \R$ as to the {\em surface gravity} of the Killing horizon $\Sigma$. In the case of static metrics,  the natural normalizations of the Killing vector field have been proposed in Subsection~\ref{sub:prelim}, at least in the most relevant situations. Finally, it is useful to recall (see for instance~\cite[Section~12.5]{Wald}) that the number $\kappa$ can also be computed through the following identity
\begin{equation}
\label{eq:surf_grav_2}
\kappa^2\,=\,-\frac{1}{2}(|\na K|_\gamma^2)_{|_{\Sigma}}\,.
\end{equation}
In the following subsection, we are going to take advantage of this fact.

\subsection{Surface gravity on the horizons of static spacetimes.} 
\label{sub:surf_grav_static}
Here we show that the general definition of surface gravity given in Subsection~\ref{sub:surf_grav_KH} above is coherent with the definition given in Subsection~\ref{sub:prelim} for static spacetimes.
We recall that a static triple $(M,\go,u)$ is a solution to problem~\eqref{eq:SES} such that $u=0$ on $\pa M$. We have already observed that any such triple gives rise to a static spacetime
$$
(X,\gamma)\,=\,\left(\R\times (M\setminus\pa M),-u^2 dt\otimes dt+\go\right)\,
$$
obeying the {\em vacuum Einstein field equations}.
In this case, there is a canonical choice of a Killing vector field, that is 
$$
K\,=\,\frac{\pa}{\pa t}\,.
$$

The components of the boundary $\R\times \pa M$ can be interpreted as Killing horizons, as they are null hypersurfaces invariant under the flow of $K$, and such that $|K|_{\gamma}^2=-u^2=0$ on them. It is then natural to try to compute their surface gravities using the formul\ae\ of Subsection~\ref{sub:surf_grav_KH}. However, notice that the metric $\gamma$ becomes degenerate on the boundary, so, in order to use formula~\eqref{eq:surf_grav}, we would need to find new appropriate charts on the points of $\R\times \pa M$, with respect to which $\gamma$ remains Lorentzian.
Although this can be explicitly done in some special cases 
, in general it seems quite an hard task.
On the other hand, formula~\eqref{eq:surf_grav_2} is far easier to apply. In fact, the metric $\gamma$ and the Killing vector being smooth, we can compute $|\na K|^2_\gamma$ on $X$, and then take the limit as we approach the boundary $\R\times \pa M$ to compute $\kappa$.
To this end, we introduce coordinates $(x^1,\dots,x^n)$ on an open set of $M$, so that $(x^0:=t,x^1,,\dots,x^n)$ is a set of coordinates on an open set of $X$. In the following computations, we will use greek letters for indices that vary between $0$ and $n$, and latin letters for indices that vary from $1$ to $n$.
With these conventions, one has $K^\a=\delta_0^\a$, whereas the Christoffel symbols of $\gamma$ satisfy
$$
\Gamma_{0\a}^\b\,=\,\frac{\gamma^{\b\eta}}{2}\big(\pa_0 \gamma_{\a\eta}+\pa_\a \gamma_{0\eta}-\pa_\eta \gamma_{0\a}\big)
\,=
\frac{\gamma^{\b\eta}}{2}\big(\delta_{0\eta}\pa_\a\, \gamma_{00}-\delta_{0\a}\,\pa_\eta \gamma_{00}\big)\,=\,
\begin{dcases}
\frac{\pa_\a u}{u} & \hbox{ if }\, \a\neq 0,\, \b=0\,,
\\
u\,\go^{\b\eta}\,\pa_\eta u\phantom{\frac{1}{u}}\!\!\! & \hbox{ if }\, \a= 0,\, \b\neq 0\,,
\\
0\phantom{\frac{\pa_i u}{u}} & \hbox{ otherwise}\,.
\end{dcases}
$$
Now we compute
$$
\na_\a K^\b\,=\,\pa_\a K^\b +\Gamma_{\a\eta}^\b K^\eta \,=\,\Gamma_{0\a}^\b\,,
$$
hence
\begin{align*}
|\na K|^2_{\gamma}\,&=\,\gamma^{\a\eta}\,\gamma_{\b\mu}\,\na_\a K^\b\,\na_\eta K^\mu
\\
&=\,\gamma^{\a\eta}\,\gamma_{\b\mu}\,\Gamma_{0\a}^\b\,\Gamma_{0\eta}^\mu
\\
&=\,\left[-\frac{1}{u^2}\,\go_{jq}\,(u\,\go^{jr}\,\pa_r u)\,(u\,\go^{qs}\,\pa_s u)-u^2\,\go^{ip}\,\frac{\pa_i u}{u}\,\frac{\pa_p u}{u}\right]
\\
&=\,-2\,|\D u|^2\,.
\end{align*}
If $\Sigma$ is a connected component of $\R\times\pa M$, taking the limit of formula~\eqref{eq:surf_grav_2} as we approach $\Sigma$, we obtain
\begin{equation}
\label{eq:surfacegravity_static}
\kappa\,=\,|\D u|_{|_\Sigma}\,,
\end{equation}
as expected.
Formula~\eqref{eq:surfacegravity_static} justifies in some sense the canonical normalizations introduced in Subsection~\ref{sub:prelim}, that is, 
$\max_M u=1$ for $\Lambda>0$, $\sup_M u=1$ for $\Lambda=0$,
$\RRR^{\pa_\infty M}=-2(n-2)\Lambda/n$ if $\Lambda<0$. In fact, these normalizations are the ones under which the surface gravity of an horizon $\Sigma$, coincides precisely with $|\D u|_{|_\Sigma}$.

\section{Further remarks in the case of negative cosmological constant}
\label{sec:appB}

Since there is a huge amount of literature about the case $\Lambda<0$, we recall in this section the main definitions and known results, for the reader convenience.

\subsection{Model solutions with nonspherical cross sections.}
\label{sub:nonspherical_models}

Here we collect, for completeness, some other interesting model solutions of~\eqref{eq:SES}, whose cross sections are not spheres.

\begin{itemize}

\item \underline{Schwarzschild--Anti de Sitter solutions with flat topology and mass $m>0$.}
\begin{align}
\label{eq:kottler_flat_A}
\nonumber 
\phantom{\qquad}M \, = \, [r_0(m),+\infty)\times\R^{n-1} \, ,  \qquad \go \, = \, \frac{dr\otimes dr}{r^2- 2m r^{2-n}}+r^2 g_{\R^{n-1}} \, , \\
u \, = \, \sqrt{r^2- 2m r^{2-n}} \, .\phantom{\qquad\qquad\qquad\qquad\qquad}
\end{align}
where $r_0(m)=(2m)^{1/n}$ is the positive solution of $r^2- 2m r^{2-n}=0$. We observe that the manifold $M$ is usually quotiented by a group of isometries in order to replace the second factor $\R^{n-1}$ with a torus, so that the boundary
$$
\pa M\,\,=\,\,\{r=r_0(m)\}\,.
$$
becomes compact. The metric $\go$ and the function $u$ extend smoothly to the boundary, and it holds
$$
|\D u|\,\,=\,\,(n-1)m^{\frac{1}{n}}\qquad\hbox{on}\ \, \pa M\,.
$$
The triple~\eqref{eq:SA} is conformally compact in the sense of Definition~\ref{def:CC_A} below, and the metric $u^{-2}\go$ induces the standard Euclidean metric $g_{\R^{n-1}}$ on the conformal infinity $\pa_\infty M$ (for the definition of conformal infinity see Subsection~\ref{sub:defi_app}, below Definition~\ref{def:CC_A}).
In particular, the scalar curvature $\RRR^{\pa_\infty M}$ of the metric induced by $u^{-2}\go$ on $\pa_\infty M$ is constant and equal to $0$. In this case, according to the discussion in Subsection~\ref{sub:prelim}, we have not a standard way of renormalizing $u$ in order to obtain an unambiguous notion of surface gravity.
 
Finally, the functions $u$ and $|\D u|$ go to $\infty$ as we approach the conformal infinity, and more precisely we have the following asymptotic behavior
$$
\lim_{r\to\infty}\left(u^2- \frac{ \RRR^{\pa_\infty M}}{(n-1)(n-2)} 
-|\D u|^2\right)\,\,=\,\,
\lim_{r\to\infty}\left(u^2-|\D u|^2\right)\,\,=\,\,0\,.
$$

\smallskip

\item \underline{Schwarzschild--Anti de Sitter solutions with hyperbolic topology and mass $m>-\mmax$.}
\begin{align}
\label{eq:kottler_hyp_A}
\nonumber 
\phantom{\qquad}M \, = \, [r_0(m),+\infty)\times\mathbb{H}^{n-1} \, ,  \qquad \go \, = \, \frac{dr\otimes dr}{-1+r^2- 2m r^{2-n}}+r^2 g_{\mathbb{H}^{n-1}} \, , \\
u \, = \, \sqrt{-1+r^2- 2mr^{2-n}} \, .\phantom{\qquad\qquad\qquad\qquad\qquad}
\end{align}
where $r_0(m)$ is the greatest positive solution of $-1+r^2-2mr^{2-n}=0$. We remark that, in order for such an $r_0(m)$ to exits, it is sufficient to set $m>-\mmax$, where $\mmax$ is defined as in~\eqref{eq:mmax_D}.
 In particular, negative masses are acceptable. 
As for the previous model solution, the second factor $\HH^{n-1}$ is usually quotiented in order to replace it with a compact hyperbolic manifold, so that the boundary
$$
\pa M\,\,=\,\,\{r=r_0(m)\}\,,
$$
becomes compact.
The triple~\eqref{eq:kottler_hyp_A} is conformally compact in the sense of Definition~\ref{def:CC_A} below, and the metric $u^{-2}\go$ induces the standard hyperbolic metric $g_{\mathbb{H}^{n-1}}$ on the conformal infinity $\pa_\infty M$ (for the definition of conformal infinity see Subsection~\ref{sub:defi_app}, below Definition~\ref{def:CC_A}).
In particular, the scalar curvature $\RRR^{\pa_\infty M}$ of the metric induced by $u^{-2}\go$ on $\pa_\infty M$ is constant and equal to $-(n-1)(n-2)=2(n-2)\Lambda/n$, hence, according to the definition given in Subsection~\ref{sub:prelim}, the surface gravity of the horizon $\pa M$ can be computed as
$$
|\D u|_{|_{\pa M}}\,\,=\,\,r(m)\left[1+\frac{(n-2)m}{r^{n}(m)}\right]\,.
$$
Finally, the quantities $u$ and $|\D u|$ obey the following asymptotic behavior
$$
\lim_{r\to\infty}\left(u^2- \frac{ \RRR^{\pa_\infty M}}{(n-1)(n-2)} 
-|\D u|^2\right)\,\,=\,\,0\,.
$$

\smallskip

\item \underline{Anti Nariai solution (Complete non-compact Cylinder).}
\begin{align}
\label{eq:cylsol_A}
\nonumber 
\phantom{\qquad\qquad}M \, = \, (-\infty, +\infty)\times\mathbb{H}^{n-1}\,, \qquad  \go \, = \, \frac{1}{n}\,\big[dr\otimes dr+(n-2)\,g_{\mathbb{H}^{n-1}}\big]\,, \\
u \, = \, \cosh (r) \, .\phantom{\qquad\qquad\qquad\qquad\qquad\qquad\quad}
\end{align}
The Anti Nariai solution has empty boundary and the set 
$$
{\rm MIN}(u)\,=\,\{p\in M\,:\, u(p)=1\}\,,
$$ 
coincides with the sphere $\{0\}\times\mathbb{H}^{n-1}$. Moreover, this solution has two ends, where the function $u$ goes to infinity. Finally, we have
$$
u^2-1-\frac{1}{n}|\D u|^2\,\,=\,\,0\,,
$$
pointwise on $M$ and, in particular, in contrast with the (Schwarzschild--)Anti de Sitter solutions, it holds $\lim_{r\to\infty}(u^2-\umin^2-|\D u|^2)=-\infty$. 
\end{itemize}
%

\subsection{Standard definitions and known results.}
\label{sub:defi_app}

In this subsection, we discuss some of the classical results on static metrics with negative cosmological constant, which we recall are triples $(M,\go,u)$ that satisfy the following problem
\begin{equation}
\label{eq:pb_conbordo_A}
\begin{dcases}
u\,\Ric=\DD u-n\,u\,\go, & \mbox{in } M\\
\ \;\,\De u=n\, u, & \mbox{in } M\\
\ \ \ \ \; u>0, & \mbox{in }  M \\
\ \ \ \ \; u=0, & \mbox{on }  \pa M \\
\  u(x)\rightarrow +\infty & \mbox{as } x\rightarrow \infty
\end{dcases}
 \qquad  \hbox{with} \quad \RRR\equiv -n(n-1)\, .
\end{equation}
This is precisely system~\eqref{eq:pb_A}, that we have rewritten here for the sake of reference.
However, notice that this time we are not assuming that $\pa M$ is empty. In fact, most of the following classical results (with the important exception of Theorem~\ref{thm:uniquenessW_A}) do not rely on this hypotesis.

The usual way to obtain classification results for solutions $(M,\go,u)$ of system~\eqref{eq:pb_conbordo_A} is to ask for some suitable asymptotic behavior of the triple. This approach was started in~\cite{Penrose}, where the definition of conformal compactness was introduced, and developed by a number of authors, see for instance~\cite{Chr_Sim,Hij_Mon} and the references therein. Below we will retrace this approach and we will discuss some of the results that one can obtain from it.

\begin{defapp}
\label{def:CC_A}
A triple $(M,\go,u)$ that solves problem~\eqref{eq:pb_conbordo_A} is said to be {\em conformally compact} if
\begin{itemize}
		\item[(i)] The manifold $M\setminus\pa M$ is diffeomorphic to the interior of a compact manifold $\overline{M}_\infty$ with boundary $\pa \overline{M}_\infty=\pa M\cup\pa_{\infty}M$, with $\pa_\infty M\cap M=\emptyset$,
		
\item[(ii)] The metric $\bar g=u^{-2} \go$ extends smoothly to the whole $\overline{M}_\infty\setminus\pa M=(M\setminus\pa M)\cup\pa_\infty M$. 
	\end{itemize}
\end{defapp}

\noindent The manifold $\pa_\infty M$ is usually called the {\em conformal infinity} of the conformally compact triple $(M,\go,u)$.
Each connected component of $\pa_\infty M$ corresponds to an end of the manifold $M$. 
A standard computation (see for instance~\cite{Graham,Mazzeo}) shows that, if $(M,\go,u)$ is conformally compact, then the Riemannian tensor of $\go$ satisfies
$$
\RRR_{ijkl}\,=\,-|d(u^{-1})|_{\bar g}^2\Big[\go_{ik}\,\go_{jl}-\go_{il}\,\go_{jk}\Big]+\mathcal{O}(u^{-3})\,,
$$
as $u\to\infty$.
In particular, since the scalar curvature of $\go$ is constant and equal to $-n(n-1)$, it follows that $|d(u^{-1})|_{\bar g}$ goes to $1$ as we approach the infinity. Therefore, the sectional curvature of $g$ converge to $-1$, and the manifold $(M,g)$ is weakly asymptotically hyperbolic in the sense of~\cite{Wang_1}.

%
%
%

As already observed, the conformal infinity $\pa_\infty M$ may have more than one connected component, each corresponding to a different end of $M$. However, the next result shows that, under suitable hypoteses, the conformal infinity is forced to be connected.

\begin{proposition}[{\cite[Theorem~I.1]{Chr_Sim}, \cite[Proposition~2]{Hij_Mon}}]
	\label{pro:CC_A}
	Let $(M,\go,u)$ be a conformally compact solution to problem~\eqref{eq:pb_conbordo_A}. If 
	\begin{itemize}
		\item either $n=3$,
		\item or the metric $(u^{-2}\go)_{|_{\pa_\infty M}}$ has nonnegative scalar curvature and $\pa M=\emptyset$,
	\end{itemize}
	then the conformal infinity $\pa_\infty M$ is connected.
\end{proposition}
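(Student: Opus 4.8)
The plan is to argue by contradiction, following~\cite{Chr_Sim} and~\cite{Hij_Mon}. Suppose that $\pa_\infty M$ has connected components $\Sigma_1,\dots,\Sigma_k$ with $k\ge2$. Since the static potential $u$ is proper and tends to $+\infty$ at each end, for every sufficiently large regular value $T$ the superlevel set $\{u>T\}$ is the disjoint union of $k$ collars, one around each $\Sigma_i$, so that $\{u\le T\}$ is a compact manifold whose boundary $\{u=T\}$ has $k$ connected components: this is the configuration we aim to exclude. The analytic ingredient is Shen's Identity~\eqref{eq:div_BGH_AD}, which in the $\Lambda<0$ case reads $\mathrm{div}_{\go}\big(u^{-1}\D f\big)\ge0$ for $f:=|\D u|^2-u^2+\umin^2$ (cf.~\eqref{eq:elliptic_A}); by~\eqref{eq:ALAdS_revisited_A} the function $f$ extends smoothly to $\overline M_\infty$, with $f=\umin^2-\RRR^{\pa_\infty M}/\big((n-1)(n-2)\big)$ on $\pa_\infty M$, so that the hypothesis $\RRR^{\pa_\infty M}\ge0$ says precisely that $\pa_\infty M$, with the metric induced by $\bar g=u^{-2}\go$, has nonnegative scalar curvature. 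Performing the conformal change $\bar g=u^{-2}\go$, a short computation turns the differential inequality above into \[ \De_{\bar g}f\;+\;(n-3)\,\frac{\langle\bna u,\bna f\rangle_{\bar g}}{u}\;\ge\;0\qquad\text{on }M, \] and this is the point at which dimension $3$ is special: for $n=3$ the first-order term drops out and $f$ becomes genuinely $\bar g$-subharmonic, all quantities being smooth up to the compact boundary $\pa_\infty M$.

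For $n=3$ I would then argue as in~\cite{Chr_Sim}: the strong maximum principle forces the $\bar g$-subharmonic function $f$ to attain its maximum on $\pa_\infty M$, and combining this with the nonnegative quadratic remainder $|\DD u|^2-(\De u)^2/n$ from~\eqref{eq:div_BGH_AD}, with the static equations and with a Gauss--Bonnet type argument on the surfaces $\Sigma_i$, one reaches a contradiction with $k\ge2$. In the remaining case ($\pa M=\emptyset$ and $\RRR^{\pa_\infty M}\ge0$) the sign of the first-order term is no longer under control and I would instead use minimal hypersurfaces, in the spirit of the Riemannian topological-censorship arguments: two components of $\pa\overline M_\infty$ are homologous, hence one can minimise $(n-1)$-volume in the separating homology class, using the level sets $\{u=T\}$ --- whose mean curvature with respect to the outer normal tends to $n-1>0$ in the asymptotically hyperbolic region --- as mean-convex barriers that confine a minimising sequence to a fixed compact set. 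This produces a closed, embedded, stable minimal hypersurface $\Gamma\subset(M,\go)$ (smooth if $n\le7$, with the standard modifications otherwise); inserting $\RRR\equiv-n(n-1)$ into the stability inequality and the Gauss equation yields a bound on $\int_\Gamma\RRR^\Gamma$ which, combined with $\RRR^{\pa_\infty M}\ge0$ and a comparison exploiting that $u$ is subharmonic and proper, contradicts the existence of $\Gamma$ --- unless equality holds throughout, in which case one is led to $\DD u=u\,\go$ and hence to a warped-product metric, which cannot be conformally compact with $k\ge2$ ends (compare the Anti Nariai solution~\eqref{eq:cylsol_A}, which is not conformally compact).

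The main obstacle, in both branches, is the asymptotic analysis at the conformal infinity: one has to make the collar picture quantitative --- controlling $u$, $|\D u|$ and the second fundamental form of $\{u=T\}$ as $T\to+\infty$, which rests on the expansion of the curvature tensor recalled just before the statement --- so as to establish the barrier property in the minimal-surface branch and the vanishing (or the favourable sign) of the boundary terms carried by $\{u=T\}$ in the integration-by-parts branch. A secondary difficulty is the borderline, Yamabe-null case $\RRR^{\pa_\infty M}\equiv0$, where the strict inequalities degenerate and the rigidity must be read off directly from the equality case $\DD u=u\,\go$ of Shen's Identity, exactly as in the proof of Theorem~\ref{thm:shen_A_dritto}.
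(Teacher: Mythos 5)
First, a point of order: the paper does not prove this proposition at all --- it is imported verbatim from \cite[Theorem~I.1]{Chr_Sim} and \cite[Proposition~2]{Hij_Mon}, so there is no internal argument to compare against. Judged on its own terms, your proposal is an outline of where a proof might live rather than a proof. The one piece you actually carry out is correct and genuinely the right observation: starting from $\mathrm{div}_g\big(u^{-1}\D f\big)\ge 0$ with $f=|\D u|^2-u^2+\umin^2$ and conformally rescaling by $\bar g=u^{-2}\go$, one does get
\[
\De_{\bar g}f+(n-3)\,\frac{\langle\bna u,\bna f\rangle_{\bar g}}{u}\;\ge\;0\,,
\]
so that $f$ is $\bar g$-subharmonic precisely when $n=3$; this is indeed the mechanism that singles out dimension three in \cite{Chr_Sim}.

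The gap is that neither branch ever produces the contradiction with $k\ge 2$. In the $n=3$ branch, subharmonicity of $f$ up to $\overline M_\infty$ only tells you that $\sup f$ is controlled by the boundary values $\umin^2-\RRR^{\Sigma_i}/2$ on the components $\Sigma_i$ of $\pa_\infty M$ (and by $|\D u|^2$ on $\pa M$, which is allowed to be nonempty here and toward which $\bar g$ is complete, so even applying the maximum principle needs a parabolicity or barrier argument). Since the $n=3$ case carries \emph{no} sign hypothesis on $\RRR^{\pa_\infty M}$, there is no evident obstruction to $f$ attaining its maximum on one of several boundary components; the sentence ``combining this with \dots a Gauss--Bonnet type argument \dots one reaches a contradiction with $k\ge2$'' is exactly the missing proof. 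In the second branch you correctly point at the Witten--Yau/Cai--Galloway circle of ideas, but the execution is deferred: minimizing pure $(n-1)$-area in the separating class with the level sets $\{u=T\}$ as barriers is not quite what works when only $\RRR\equiv-n(n-1)$ (rather than a Ricci bound) is available --- one needs the modified ``brane'' functional $\mathrm{Vol}_{n-1}(\Gamma)-(n-1)\mathrm{Vol}_n(\Omega)$ to get both the confinement and a usable second-variation inequality --- and the step ``yields a bound on $\int_\Gamma\RRR^\Gamma$ which \dots contradicts the existence of $\Gamma$'' is again the entire content of the theorem. You candidly flag the quantitative collar analysis and the Yamabe-null borderline case as open obstacles, which confirms that this is a plan, not a proof; as written it should be presented (as the paper in fact does) as a citation of \cite{Chr_Sim} and \cite{Hij_Mon}.
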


\begin{remark}
It may be interesting to compare the second point in Proposition~\ref{pro:CC_A} with Corollary~\ref{cor:consequences_ALAdS}. In fact, this latter result tells us that, if one requires that a stronger bound on the scalar curvature of $(u^{-2}\go)_{|_{\pa_\infty M}}$ holds on the ends of a single region $N\subset M\setminus{\rm MIN}(u)$, then not only the conformal infinity is connected, but also the solution is isometric to the Anti de Sitter solution~\eqref{eq:A}.
\end{remark}

\noindent 
Another important property of conformally compact triples is that it is possible to expand the metric $\go$ and the potential $u$ in terms of the so called {\em special defining function} of the conformal infinity. We refer to~\cite{Hij_Mon} for the details. Here we just remark that, as a consequence of~\cite[Lemma~3]{Hij_Mon}, one easily computes the following asymptotic behavior of the potential $u$
\begin{equation}
\label{eq:ALAdS_A}
\lim_{u\to+\infty}\left[u^2-\frac{\RRR^{\pa_\infty M}}{(n-1)(n-2)}-|\D u|^2\right]\,=\,0\,,
\end{equation}
where $\RRR^{\pa_\infty M}$ is the scalar curvature of $(\pa_\infty M,\bar g_{|_{\pa_\infty M}})$. 
This is a good occasion to notice that the Anti de Sitter triple~\eqref{eq:A} is indeed conformally compact, and its conformal infinity is isometric to the unit round sphere, so that the right hand side of~\eqref{eq:ALAdS_A} is equal to $1$.
 This observation suggests to introduce the following definition, which is a stronger version of Definition~\ref{def:CC_A}.

\begin{defapp}
\label{def:AAdS}
A triple $(M,\go,u)$ that solves problem~\eqref{eq:pb_conbordo_A} is said to be {\em asymptotically Anti de Sitter} if it is conformally compact, the conformal boundary $\pa_\infty M$ is diffeomorphic to a sphere $\Sph^{n-1}$ and the metric $u^{-2}\go$ extends to the standard spherical metric $g_{\Sph^{n-1}}$ on $\pa_\infty M$.
\end{defapp}

\noindent From formula~\eqref{eq:ALAdS_A} we immediately deduce that, if $(M,\go,u)$ is asymptotically Anti de Sitter, it holds 
\begin{equation*}
\lim_{u\to +\infty} \left(u^2-1-|\D u|^2\right)=0\,,
\end{equation*}
and this refined version of~\eqref{eq:ALAdS_A} has been used to prove Corollary~\ref{cor:small_wang_A}.
Furthermore, asymptotically Anti de Sitter solutions can be seen to be asymptotically hyperbolic in the sense of~\cite[Definition~2.3]{Wang_1}, and for this class of manifolds a notion of mass has been defined by Wang~\cite{Wang_1}. Under the hypotesis of the existence of a spin structure, this mass satisfies a Positive Mass Theorem (see~\cite[Theorem~2.5]{Wang_1}). More precisely, the mass of a solution $(M,\go,u)$ of~\eqref{eq:pb_conbordo_A}, such that $(M,\go)$ is spin and has empty boundary, is nonnegative and it is zero if and only if $(M,\go)$ is isometric to an hyperbolic space form. As a consequence of this rigidity statement, one deduces the following  uniqueness result for the Anti de Sitter triple, which generalizes a classical result in~\cite{Bou_Gib_Hor}.

\begin{theorem}[{\cite[Theorem~1]{Wang_2}}]
	\label{thm:uniquenessW_A}
	Let $(M,\go,u)$ be an asymptotically Anti de Sitter solution to problem~\eqref{eq:pb_conbordo_A}, and suppose that $\pa M=\emptyset$. If $M$ is spin, then $(M,\go,u)$ is isometric to the Anti de Sitter triple~\eqref{eq:A}.
\end{theorem}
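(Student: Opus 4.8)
The plan is to follow Wang's approach~\cite{Wang_2}, deducing the conclusion from the Positive Mass Theorem for asymptotically hyperbolic spin manifolds~\cite{Wang_1} together with Shen's Identity (Proposition~\ref{pro:div_BGH_AD}). First I would record the asymptotic structure. Since $(M,\go,u)$ is asymptotically Anti de Sitter, it is in particular conformally compact, its conformal infinity $\pa_\infty M$ is diffeomorphic to $\Sph^{n-1}$, and $\bg=u^{-2}\go$ extends to the round metric $g_{\Sph^{n-1}}$ there; by the curvature expansion recalled after Definition~\ref{def:CC_A}, $(M,\go)$ is then asymptotically hyperbolic in the sense of~\cite[Definition~2.3]{Wang_1}. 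Hence Wang's mass vector $\mathbf{m}=\mathbf{m}(M,\go)\in\R^{1,n}$ is well defined, and since $M$ is spin with $\pa M=\emptyset$ the Positive Mass Theorem~\cite[Theorem~2.5]{Wang_1} gives that $\mathbf{m}$ is future-directed causal or zero, with $\mathbf{m}=0$ if and only if $(M,\go)$ is a hyperbolic space form (hence, $\pa_\infty M$ being connected, isometric to $\HH^n$). In particular the energy component $m_0$ of $\mathbf{m}$ satisfies $m_0\geq 0$.

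The heart of the matter is the reverse inequality $m_0\leq 0$, which I would obtain from the static equations. Setting $w:=u^2-1-|\D u|^2$ and using $\De u=nu$, identity~\eqref{eq:div_BGH_AD} of Proposition~\ref{pro:div_BGH_AD} rewrites as
\[
{\rm div}\Big(\tfrac{1}{u}\,\D w\Big)\,=\,-\,\tfrac{2}{u}\,\big|\DD u-u\,\go\big|^2\,\leq\,0\,,
\]
where $\DD u-u\,\go$ is the trace-free part of $\DD u$. By~\eqref{eq:ALAdS_A}, applied with $\RRR^{\pa_\infty M}=(n-1)(n-2)$ (the round $\Sph^{n-1}$), one has $w\to 0$ along the unique end. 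Since $\pa M=\emptyset$ and $u$ is proper, the sublevel set $\Omega_R=\{u\leq R\}$ is compact for every $R$, with $\pa\Omega_R=\{u=R\}$ for almost every $R$; integrating the displayed inequality over $\Omega_R$ and applying the Divergence Theorem gives
\[
\frac{1}{R}\!\int\limits_{\{u=R\}}\!\!\Big\langle\,\D w\,\Big|\,\tfrac{\D u}{|\D u|}\,\Big\rangle\,\rmd\sigma\,\,\leq\,\,0\qquad\hbox{for a.e. }R>\umin\,.
\]
From the conformal compactness of the solution, $w$ admits the standard asymptotic expansion at $\pa_\infty M$, whose leading non-constant term is of order $u^{2-n}$ --- exactly as in the Schwarzschild--Anti de Sitter model~\eqref{eq:asympt_SADS} --- and the usual identification of the asymptotically hyperbolic mass (cf.~\cite{Wang_1,Hij_Mon}) yields $\lim_{R\to\infty}\tfrac{1}{R}\int_{\{u=R\}}\langle\D w\,|\,\D u/|\D u|\rangle\,\rmd\sigma=c_n\,m_0$ for a positive dimensional constant $c_n$. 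Therefore $m_0\leq 0$, and combined with the Positive Mass Theorem this forces $m_0=0$, hence (by causality) $\mathbf{m}=0$.

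To conclude, the rigidity part of the Positive Mass Theorem gives that $(M,\go)$ is isometric to $\HH^n$; equivalently, the limiting boundary term above vanishes, so --- since ${\rm div}(u^{-1}\D w)\leq 0$ is integrated over the exhaustion $\{\Omega_R\}$ --- the bulk term must vanish identically, i.e. $\DD u=u\,\go$ on all of $M$. As in the proof of Theorem~\ref{thm:shen_A_dritto}, \cite[Lemma~3.3]{Qing} then identifies $(M,\go,u)$ with the Anti de Sitter triple~\eqref{eq:A}, the scaling of $u$ being already pinned down by the asymptotically Anti de Sitter normalization $(u^{-2}\go)_{|_{\pa_\infty M}}=g_{\Sph^{n-1}}$.

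The main obstacle is the middle step, and specifically the identity $\lim_{R\to\infty}\tfrac{1}{R}\int_{\{u=R\}}\langle\D w\,|\,\D u/|\D u|\rangle\,\rmd\sigma=c_n\,m_0$. This requires, on the one hand, that $w$ have a sufficiently regular expansion at the conformal infinity --- which is where conformal compactness (via the special defining function of~\cite{Hij_Mon}) is used --- and, on the other hand, matching that boundary integral against Wang's definition of the mass while keeping careful track of the overall sign; this is a standard but delicate computation, and it is the reason the spin hypothesis (entering only through~\cite[Theorem~2.5]{Wang_1}) cannot be dispensed with by the elementary methods of Section~\ref{sec:maxprinc}. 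Everything else reduces to the Bochner-type identity of Proposition~\ref{pro:div_BGH_AD}, a routine application of the Divergence Theorem on the compact sublevel sets $\Omega_R$, and the rigidity tools of~\cite{Wang_1,Qing}.
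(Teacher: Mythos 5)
The paper does not actually prove this statement---it is quoted as Wang's Theorem~1, with the remark that it rests on the positive mass theorem for asymptotically hyperbolic spin manifolds---and your proposal reconstructs precisely that argument: Wang's positive mass theorem supplies $m_0\geq 0$ together with the rigidity statement, while Shen's identity, in the form ${\rm div}(u^{-1}\D w)=-\tfrac{2}{u}|\DD u-u\go|^2\leq 0$ integrated over the sublevel sets of $u$, supplies the complementary inequality $m_0\leq 0$. Your sign conventions check out (on the Schwarzschild--Anti de Sitter model the limiting flux is $2(n-1)(n-2)|\Sph^{n-1}|\,m$), and the one step you leave to citation---the identification of $\lim_{R\to\infty}\tfrac{1}{R}\int_{\{u=R\}}\langle\D w\,|\,\D u/|\D u|\rangle\,\rmd\sigma$ with a positive multiple of the energy, which requires the full expansion of $u$ and $\go$ at the conformal infinity---is exactly the computation carried out in~\cite{Wang_2} and is where the asymptotically Anti de Sitter hypothesis is genuinely used.
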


\noindent This result has been further extended by Qing in~\cite{Qing}, who was able to drop the spin assumption. The idea of~\cite{Qing} is to glue an asymptotically flat end to the conformally compactified manifold, so that it is possible to use the rigidity statement of the Positive Mass Theorem 
with corners proved by Miao~\cite{Miao}. 
Another extension of Theorem~\ref{thm:uniquenessW_A}, stated in~\cite[Theorem~8]{Hij_Mon}, allows for a different topology of the conformal infinity, provided that an opportune spinor field exists on the conformal boundary.

For completeness, we mention that another version of mass has been given by Zhang~\cite{Zhang} in the three dimensional case. Moreover, we also point out that a more general definition of mass has been provided by Chrusciel and Herzlich in~\cite{Chr_Her}, and as a consequence another proof of Theorem~\ref{thm:uniquenessW_A} has been provided (see~\cite[Theorem~4.3]{Chr_Her}).

It is important to remark that, when our static triple is not asymptotically Anti de Sitter, in general the mass introduced by Wang~\cite{Wang_1} and Chrusciel-Herzlich~\cite{Chr_Her} is not defined.
For this reason, one is led to investigate other possible definitions of mass which allow for less rigid behaviours at infinity, while preserving some interesting properties.

We cite one of the most important of these alternative masses. Given a $3$-dimensional solution $(M,\go,u)$ to problem~\eqref{eq:pb_conbordo_A} and a closed compact surface $\Sigma$ in $M$, the {\em Hawking mass} of $\Sigma$ is defined as
$$
m_H(\Sigma)\,=\,\sqrt{\frac{|\Sigma|}{16\pi}}\,\left[1-{\rm genus}(\Sigma)-\frac{1}{16\pi}\int_\Sigma(\HHH^2-4)\,\rmd\sigma\right]\,,
$$
where $\HHH$ is the mean curvature of $\Sigma$. The following result by Chrusciel and Simon compares the Hawking mass with another definition of mass, which is very much in the spirit of the virtual mass that we have defined in Subsection~\ref{sub:surfmass} for the case $\Lambda>0$.

\begin{theorem}[{\cite[Theorem~I.5]{Chr_Sim}}]
\label{thm:chr_sim}
Let $(M,\go,u)$ be a $3$-dimensional solution of system~\eqref{eq:pb_conbordo_A} with nonempty boundary $\pa M$.
Suppose that $(M,\go,u)$ is conformally compact, that the conformal infinity $\pa_\infty M$ (which is connected thanks to Proposition~\ref{pro:CC_A}) satisfies ${\rm genus}(\pa_\infty M)\geq 2$, and that the scalar curvature $\RRR^{\pa_\infty M}$ induced by $u^{-2}\go$ on $\pa_\infty M$ is constant and equal to $-6$. Suppose further that 
$$
0\leq \kappa:=\max_{\pa M} |\D u| \leq 1\,.
$$
Then there is a unique value $\mu=\mu(M,\go,u)\leq 0$ such that the boundary of the Schwarzschild--Anti de Sitter triple with hyperbolic topology~\eqref{eq:kottler_hyp_A} and mass $\mu$ has surface gravity equal to $\kappa$, and it holds
\begin{equation}
\label{eq:chr_sim}
m_H(\{u=t\})\,\leq \,\mu\,,
\end{equation}
for all $t$.
Moreover,  if the equality $m_H(\{u=t\})=\mu$ holds for some $t$, then, up to a normalization of $u$, the triple $(M,\go,u)$ is isometric to the Schwarzschild--Anti de Sitter solution with hyperbolic topology~\eqref{eq:kottler_hyp_A}.
\end{theorem}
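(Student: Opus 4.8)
The plan is to follow the Boucher-Gibbons-Horowitz strategy in the refined form due to Chrusciel and Simon: establish a monotonicity formula for the Hawking mass of the level sets $\{u=t\}$ and match its limiting value against the mass of the reference solution~\eqref{eq:kottler_hyp_A}. The first step is to make $\mu$ well defined. For the hyperbolic Schwarzschild--Anti de Sitter triple~\eqref{eq:kottler_hyp_A} the horizon surface gravity $r_0(m)\big[1+(n-2)\,m\,r_0^{-n}(m)\big]$ is, as a function of $m\in(-\mmax,+\infty)$, continuous and strictly increasing, with value $1$ at $m=0$; hence it is a bijection onto its range, mapping $(-\mmax,0]$ onto an interval containing $(0,1]$. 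Since $0\leq\kappa\leq 1$ by hypothesis (and $\kappa>0$ because $|\D u|>0$ on $\pa M$), there is a unique $\mu=\mu(M,\go,u)\leq 0$ realising $\kappa$ as its horizon surface gravity, and $\mu\leq 0$ is exactly what the assumption $\kappa\leq 1$ provides.

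Next, set $\phi(t)=m_H(\{u=t\})$ for $t$ in the range of $u$. Differentiating $\phi$ along the foliation $\{u=t\}$, whose unit normal is $\D u/|\D u|$ and which therefore advances with normal speed $1/|\D u|$, and invoking the static equations~\eqref{eq:pb_conbordo_A}, the first and second variation of area, the Gauss equation, and the Gauss-Bonnet identity $\int_{\{u=t\}}\RRR^{\{u=t\}}\rmd\sigma=4\pi\chi(\{u=t\})$ — this is where both the constraint ${\rm genus}(\{u=t\})={\rm genus}(\pa_\infty M)\geq 2$ and the normalization $\RRR\equiv-6$ enter, to control the topological term and the bulk scalar curvature — one arrives at an expression for $\phi'(t)$ whose integrand is a sum of the nonnegative quantity $|\DD u|^2-(\De u)^2/n$ appearing in Shen's Identity~\eqref{eq:div_BGH_AD} and the squared norm of the trace-free second fundamental form of $\{u=t\}$. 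Hence $\phi$ is nondecreasing as $t$ increases toward the conformal infinity, and $\phi$ is locally constant only where every level set is totally umbilic with $|\D u|$ constant on it — precisely the warped-product structure underlying~\eqref{eq:kottler_hyp_A}.

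It remains to identify the limit of $\phi$ at the conformal infinity. As $t\to+\infty$ one approaches $\pa_\infty M$; expanding $\go$ and $u$ in terms of the special defining function of $\pa_\infty M$ (cf.~\cite{Hij_Mon}) and using the asymptotic relation~\eqref{eq:ALAdS_A} with $\RRR^{\pa_\infty M}=-6$, one computes $\lim_{t\to+\infty}\phi(t)$ in terms of the asymptotic data and verifies that, under the normalization fixed in the first step, it coincides with the mass parameter $\mu$ of the reference solution. Granting this, the monotonicity of $\phi$ yields $m_H(\{u=t\})\leq\mu$ for all $t$. Finally, if $m_H(\{u=t_0\})=\mu$ for some $t_0$, then $\phi\equiv\mu$ on $[t_0,+\infty)$ and thus $\phi'\equiv 0$ there; by the equality discussion above, on that region the level sets are totally umbilic with constant $|\D u|$, and integrating the resulting ordinary differential equations (with $\RRR\equiv-6$) exhibits the metric as the warped product~\eqref{eq:kottler_hyp_A}. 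The analyticity of $(M,\go,u)$ then propagates this identification to all of $M$, and a normalization of $u$ matches the constant value of $|\D u|$ on the horizon with the reference surface gravity $\kappa$.

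The main obstacle is precisely this last identification: one must push the asymptotic expansion of the conformally compact static metric, written in terms of the special defining function of $\pa_\infty M$, far enough to read off the limiting Hawking mass and, crucially, to recognise that this asymptotic quantity is exactly the mass of the Schwarzschild--Anti de Sitter solution whose horizon surface gravity equals $\kappa=\max_{\pa M}|\D u|$ — effectively a Riemannian Penrose-type comparison between the asymptotic geometry and the horizon data — rather than some a priori unrelated invariant. A secondary difficulty is organising the monotonicity computation so that, after the use of Gauss-Bonnet, the right-hand side is manifestly a sum of squares with vanishing locus the condition that the level sets be totally umbilic with constant $|\D u|$; this bookkeeping is the technical heart of the argument of Chrusciel and Simon.
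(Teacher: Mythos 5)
This statement is quoted from Chru\'sciel--Simon and the paper does not reprove it; the only indication it gives of the actual argument is the remark immediately below the theorem, namely that ``the crucial step of the proof is the application of the Maximum Principle to the differential inequality [Beig--Simon, formula (V.4)], which is elliptic only if $\mu\leq 0$''. Your proposal follows a genuinely different route (a Geroch-type monotonicity of the Hawking mass along the level sets of $u$), and as written it has two gaps that I do not think can be repaired in the form you describe.

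The first and most serious gap is the claim that $\lim_{t\to+\infty} m_H(\{u=t\})=\mu$. The asymptotic limit of the Hawking mass is an invariant of the conformal infinity (essentially the total mass of the end in the sense of Wang or Chru\'sciel--Herzlich), whereas $\mu$ is manufactured purely from the horizon datum $\kappa=\max_{\pa M}|\D u|$. For a solution that is not the exact Kottler metric these two numbers have no reason to coincide, and asserting their equality is essentially assuming the horizon-to-infinity comparison that the theorem is about; no amount of expansion in the special defining function will produce $\kappa$ from the asymptotic data. What the asymptotics \emph{do} give you --- and what the actual proof exploits --- is formula~\eqref{eq:ALAdS_A}: the leading behaviour of $u^2-|\D u|^2$ at infinity is universal once $\RRR^{\pa_\infty M}=-6$ is fixed. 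Chru\'sciel--Simon therefore compare $|\D u|^2$ pointwise with the reference profile of the mass-$\mu$ Kottler solution via a function $W_\mu$ which is $\leq 0$ on $\pa M$ (by the very definition of $\mu$ through $\kappa$), tends to $0$ at $\pa_\infty M$ (by~\eqref{eq:ALAdS_A}), and satisfies an elliptic differential inequality \emph{precisely when} $\mu\leq 0$; the maximum principle then gives $W_\mu\leq 0$ everywhere, and integrating over a level set together with Gauss--Bonnet yields~\eqref{eq:chr_sim}. In your scheme the hypothesis $\kappa\leq 1$ is used only to make $\mu$ well defined, so the mechanism by which $\mu\leq 0$ enters the analytic core of the proof is lost.

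The second gap is the monotonicity itself. Geroch monotonicity of $m_H$ holds for inverse mean curvature flow (normal speed $1/\HHH$), not for the foliation by level sets of $u$ (normal speed $1/|\D u|$); for the latter, a ``sum of squares'' structure of $\phi'(t)$ after Gauss--Bonnet is not automatic and you have not derived it. You also need to handle critical points of $u$, at which the level sets are singular and may change connectedness and genus --- the identity ${\rm genus}(\{u=t\})={\rm genus}(\pa_\infty M)$ that you invoke is not available a priori, and the Hawking mass as defined in the paper depends explicitly on the genus. Until the variational computation is actually carried out and the sign of each term (including its dependence on $\mu\leq 0$) is exhibited, the inequality $\phi'\geq 0$ and the rigidity discussion resting on it remain unsubstantiated.
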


\noindent
We point out that the parameter $\mu$ in the above theorem is the natural analogue of the virtual mass that we have defined in Subsection~\ref{sub:surfmass} in the case $\Lambda>0$. In fact, it is obtained in the same way, that is, by computing the maximal surface gravity of the horizons and then comparing it with the model solutions, which in this case are the Schwarzschild--Anti de Sitter solutions with hyperbolic topology.
Formula~\eqref{eq:chr_sim} shows a connection between this quantity $\mu$ and the Hawking mass. 
Moreover, as it is discussed below, the parameter $\mu$ plays an important role in the proof of an area bound and a Black Hole Uniqueness Theorem for Schwarzschild--Anti de Sitter solutions with hyperbolic topology in dimension $n=3$. These results are in line with the ones that will be proven for solutions with $\Lambda>0$ in the forthcoming paper~\cite{Bor_Maz_2-II}.
We stress that the hypotesis $\mu\leq 0$ (or equivalently $\kappa\leq 1$) in Theorem~\ref{thm:chr_sim} above is necessary. In fact, the crucial step of the proof is the application of the Maximum Principle to the differential inequality~\cite[formula~(V.4)]{Bei_Sim}, which is elliptic only if $\mu\leq 0$.

Under the same hypoteses of Theorem~\ref{thm:chr_sim} and with the same notations, Chrusciel and Simon also prove that, for any boundary component $\Sigma\subset \pa M$ with maximal surface gravity $\kappa$, it holds
\begin{equation}
\label{eq:areabound_chr_sim}
|\Sigma|\,\geq\,\frac{{\rm genus}(\Sigma)-1}{{\rm genus}(\pa_\infty M)-1}\,4\pi\,r^2(\mu)\,,
\end{equation}
where $\mu$ is again the parameter corresponding to the mass of the Schwarzschild--Anti de Sitter solution with hyperbolic topology whose horizon has surface gravity $\kappa$, and $r(\mu)$ is the largest positive solution of $1-x^2+2\mu/x=0$.
Building on this, Lee and Neves proved the following Black Hole  Uniqueness Theorem for Schwarzschild--Anti de Sitter solutions with hyperbolic topology~\eqref{eq:kottler_hyp_A} and with nonpositive virtual mass.

\begin{theorem}[{\cite[Theorem~2.1]{Lee_Nev}}]
\label{thm:BHU_LeeNeves_A}
In the same hypoteses and notations of Theorem~\ref{thm:chr_sim}, suppose that there exists an horizon $\Sigma\subset\pa M$ with maximal surface gravity $\kappa$ and with 
$$
{\rm genus}(\Sigma)\,\geq\, {\rm genus}(\pa_\infty M)\,.
$$
Then, up to a normalization of $u$,
$(M,\go,u)$ is isometric to the Schwarzschild--Anti de Sitter solution with hyperbolic topology~\eqref{eq:kottler_hyp_A} and with mass $\mu$, where $\mu$ is the mass of the model solution~\eqref{eq:kottler_hyp_A} whose horizon has surface gravity $\kappa$.
\end{theorem}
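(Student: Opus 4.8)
The plan is to pit against each other, at the horizon $\Sigma$, two consequences of the hypotheses of Theorem~\ref{thm:chr_sim}: an upper bound for the Hawking mass of $\Sigma$ and a lower bound for its area. The extra genus assumption will make these estimates incompatible unless $\Sigma$ realizes exactly the geometry of a model horizon, and at that point the equality clause of Theorem~\ref{thm:chr_sim} will force $(M,\go,u)$ to be the corresponding Schwarzschild--Anti de Sitter triple.

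First I would note that $\Sigma$, being a connected component of the totally geodesic boundary $\pa M=\{u=0\}$, carries $\HHH\equiv 0$ and $u\equiv 0$, so its Hawking mass reduces to an explicit function of $|\Sigma|$ and ${\rm genus}(\Sigma)$ alone. Since $0\le\kappa\le1$ is exactly the condition under which~\eqref{eq:chr_sim} is available, evaluating~\eqref{eq:chr_sim} at $t=0$ (restricting, if $\pa M$ is disconnected, to the region of $M$ bounded by $\Sigma$, on which the small level sets of $u$ are connected) gives $m_H(\Sigma)\le\mu$. Next I would feed $\Sigma$ into the area bound~\eqref{eq:areabound_chr_sim}: since ${\rm genus}(\Sigma)\ge{\rm genus}(\pa_\infty M)$, the coefficient $({\rm genus}(\Sigma)-1)/({\rm genus}(\pa_\infty M)-1)$ is at least $1$, so~\eqref{eq:areabound_chr_sim} simplifies to $|\Sigma|\ge 4\pi\,r^2(\mu)$, with $r(\mu)$ the horizon radius of the model~\eqref{eq:kottler_hyp_A} of mass $\mu$.

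The core of the argument is then a sandwich. Using the explicit form of the Hawking mass of a totally geodesic surface, together with the mass--radius relation of the model, I would check that the map $A\mapsto m_H$ on totally geodesic surfaces of a fixed topological type is monotone non-decreasing for $A\ge 4\pi r^2(\mu)$, and that its value at $A=4\pi r^2(\mu)$ is $\le\mu$, with equality precisely when ${\rm genus}(\Sigma)={\rm genus}(\pa_\infty M)$. Combined with $m_H(\Sigma)\le\mu$ and $|\Sigma|\ge 4\pi r^2(\mu)$ this pins $|\Sigma|=4\pi r^2(\mu)$, ${\rm genus}(\Sigma)={\rm genus}(\pa_\infty M)$ and $m_H(\Sigma)=\mu$. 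Since now $m_H(\{u=t\})=\mu$ for $t=0$, the rigidity clause of Theorem~\ref{thm:chr_sim} yields that, up to a normalization of $u$, $(M,\go,u)$ is isometric to the Schwarzschild--Anti de Sitter solution with hyperbolic topology~\eqref{eq:kottler_hyp_A} of mass $\mu$.

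The main obstacle is exactly this sandwich. The Hawking mass of a totally geodesic surface of genus $\ge2$ is \emph{not} monotone in area on the whole positive axis --- it first decreases to a negative minimum and then increases --- so one must verify that the area lower bound of the previous paragraph lies on the non-decreasing branch. This is ensured by the constraint $\kappa\le1$ (equivalently, $\mu$ lies in the admissible negative interval, whose endpoint $\kappa=0$ corresponds to the minimum-area model) together with the genus-dependent coefficient in~\eqref{eq:areabound_chr_sim}; the borderline case $\kappa=0$, where the area lower bound meets the minimum of the Hawking-mass function, must be inspected separately but still closes the sandwich. What remains --- making the constants in~\eqref{eq:chr_sim} and~\eqref{eq:areabound_chr_sim} mutually consistent by tracking the normalization of $u$ that fixes $\RRR^{\pa_\infty M}=-6$ and the cross-sectional curvature normalization of~\eqref{eq:kottler_hyp_A} --- is routine bookkeeping.
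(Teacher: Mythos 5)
Your overall architecture --- an upper bound on the Hawking mass of $\Sigma$, a lower bound on its area, a squeeze, and then the rigidity clause of Theorem~\ref{thm:chr_sim} --- has the same shape as the Lee--Neves argument the paper sketches. But the two steps you defer are precisely where the proof lives, and as written your squeeze does not close.

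First, the quantitative heart. Write $g={\rm genus}(\Sigma)$, $g_\infty={\rm genus}(\pa_\infty M)$, and $f(A)=\sqrt{A/16\pi}\,\big[1-g+A/4\pi\big]$ for the Hawking mass of a totally geodesic surface of genus $g$ and area $A$ (this is what the paper's formula gives when $\HHH\equiv 0$). On the model~\eqref{eq:kottler_hyp_A} the horizon has area $4\pi(g_\infty-1)r^2(\mu)$ and, using $r(r^2-1)=2\mu$, Hawking mass $(g_\infty-1)^{3/2}\mu$, which for $g_\infty\geq 3$ and $\mu<0$ is strictly \emph{less} than $\mu$. Likewise, at the area threshold $A_*=\frac{g-1}{g_\infty-1}4\pi r^2(\mu)$ coming from~\eqref{eq:areabound_chr_sim} one finds $f(A_*)=\frac{r}{2}\big(\frac{g-1}{g_\infty-1}\big)^{3/2}(r^2-g_\infty+1)$, which equals $\mu$ only when $g=g_\infty=2$ (or $\mu=0$) and is otherwise strictly below $\mu$. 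This is fatal for the squeeze: from ``$f(A)\leq\mu$, $A\geq A_*$, $f$ nondecreasing beyond $A_*$, $f(A_*)\leq\mu$'' nothing is pinned down, since the admissible set for $A$ is a nondegenerate interval whenever $f(A_*)<\mu$. To force $A=A_*$ you need $f(A_*)\geq\mu$, i.e.\ the inequality in the \emph{opposite} direction to the one you state, and that is exactly what fails for $g_\infty\geq 3$ with the constants as quoted. Moreover the minimum of $f$ sits at $A=4\pi(g-1)/3$, which is \emph{not} always below $A_*$ (e.g.\ $g=g_\infty=4$ would require $r^2(\mu)\geq 1$, i.e.\ $\mu=0$), so even the claim that $A_*$ lies on the nondecreasing branch is false in general. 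The genus-dependent normalizations of the Hawking mass and of the area bound are therefore not ``routine bookkeeping'': they are the mechanism through which the hypothesis ${\rm genus}(\Sigma)\geq{\rm genus}(\pa_\infty M)$ acts, and the argument collapses unless they are matched correctly.

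Second, the source of the bound $m_H(\Sigma)\leq\mu$. Inequality~\eqref{eq:chr_sim} controls the Hawking mass of the full level sets $\{u=t\}$; when $\pa M$ is disconnected these are disconnected for small $t$, and there is no version of Theorem~\ref{thm:chr_sim} ``restricted to the region bounded by $\Sigma$'' --- the level sets of $u$ do not isolate one boundary component. This is exactly what the inverse mean curvature flow in the proof the paper points to is for: the weak IMCF started from the single component $\Sigma$ produces a connected family of surfaces sweeping out to $\pa_\infty M$ along which the Hawking mass is monotone, and it is this monotonicity --- not~\eqref{eq:chr_sim} evaluated at $t=0$ --- that yields the upper bound on $m_H(\Sigma)$, hence on $|\Sigma|$, to be played against~\eqref{eq:areabound_chr_sim}.
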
 

\noindent
The proof of this theorem is based on the monotonicity of the Hawking mass under inverse mean curvature flow, in the spirit of the classical work of Huisken-Hilmanen~\cite{Hui_Ilm} in the asymptotically flat case. This monotonicity is used to prove a bound from above on $|\Sigma|$, which combined with inequality~\eqref{eq:areabound_chr_sim}, recalling from~\eqref{eq:chr_sim} that the Hawking mass is controlled by $\mu$, gives the thesis.

Theorem~\ref{thm:BHU_LeeNeves_A} provides a first uniqueness result for static black holes with negative cosmological constant. However, as already noticed, this result only works for solutions with negative mass and hyperbolic topology. Unfortunately, it seems that no characterization is available in literature for what should be the most natural model, that is, the Schwarzschild--Anti de Sitter solution with spherical topology~\eqref{eq:SA}.


\subsection*{Acknowledgements}
{\em The authors would like to thank A. Carlotto and P. T. Chru\'sciel for their interest in our work and for stimulating discussions during the preparation of the manuscript. The authors are members of the Gruppo Nazionale per l'Analisi Matematica, la Probabilit\`a e le loro Applicazioni (GNAMPA) of the Istituto Nazionale di Alta Matematica (INdAM) and are partially founded by the GNAMPA Project ``Principi di fattorizzazione, formule di monotonia e disuguaglianze geometriche''. The paper was partially completed during the authors' attendance to the program ``Geometry and relativity'' organized by the Erwin Schr\"{o}dinger International Institute for Mathematics and Physics (ESI).
}


\bibliographystyle{plain}
\bibliography{biblio}

\end{document}